\newcommand{\vanish}[1]{}
\font\Cp = msbm10
\newcommand{\un}{\operatorname{un}}
\newcommand{\inv}{\operatorname{inv}}
\newcommand{\wt}{\operatorname{wt}}
\newcommand{\qchoose}[3]{\genfrac{[}{]}{0pt}{}{#1}{#2}_{#3}}
\newcommand{\disjointunion}{{\stackrel{\centerdot}{\bigcup}}}
\newcommand{\qStirling}[2]{S_{q}[#1,#2]}
\newcommand{\qtStirling}[2]{S_{q,t}[#1,#2]}
\newcommand{\wordposet}{\Pi}
\newcommand{\qed}{\mbox{$\Box$}\vspace{\baselineskip}}
\newcommand{\Allow}{{\mathcal A}}
\newenvironment{proof}{\noindent {\bf Proof:}}{{\qed}}
\newtheorem{theorem}{Theorem}[section]
\newtheorem{goal}{Goal}
\newtheorem{proposition}[theorem]{Proposition}
\newtheorem{lemma}[theorem]{Lemma}
\newtheorem{corollary}[theorem]{Corollary}
\newtheorem{remark}[theorem]{Remark}
\newtheorem{definition}[theorem]{Definition}
\newcommand{\rg}[2]
{{\mathcal R}(#1,#2)}
\newcommand{\allow}[2]{{\mathcal A}(#1,#2)}
\newcommand{\alwordset}{\mathcal{A}} 
\newcommand{\setpart}{p} 
\newcommand{\Inv}{\operatorname{Inv_r}} 
\font\Cp = msbm10
\newcommand{\Qqq}{\hbox{\Cp Q}}
\newcommand{\Zzz}{\hbox{\Cp Z}}
\newcommand{\Fff}{\hbox{\Cp F}}
\newcommand{\negwt}{\operatorname{wt}'}
\newcommand{\cupdot}{\dot{\cup}}
\newcommand{\Sym}{{\mathfrak S}}
\newcommand{\rook}[2]
{{\mathcal P}(#1,#2)} 
\newcommand{\alrook}[2]
{{\mathcal Q}(#1,#2)} 
\newcommand{\below}{\operatorname{s}} 
\newcommand{\nrow}{\operatorname{r}} 
\newcommand{\rb}{\operatorname{rb}} 
\newcommand{\lex}{<_{\text{lex}}} 
\newcommand{\prook}[2]
{{\Gamma}(#1,#2)} 
\begin{document}

\title{$q$-Stirling numbers:  A new view}

\author{{\sc Yue CAI}
          \hspace*{2 mm} and \hspace*{2 mm}
        {\sc Margaret A.\ READDY}}

\date{}
\maketitle

\begin{abstract}
We show
the classical $q$-Stirling numbers of the second kind
can be expressed compactly as a pair of statistics
on a subset of restricted growth words.
The resulting expressions are polynomials in
$q$ and $1+q$.
We extend this enumerative result
via a decomposition of a new poset
$\Pi(n,k)$
which we call
the Stirling
poset of the second kind.
Its rank generating function is the
$q$-Stirling number $\qStirling{n}{k}$.
The Stirling poset of the second kind
supports an algebraic complex
and a basis for integer homology is determined.
A parallel
enumerative, poset theoretic and
homological study for the $q$-Stirling numbers of the first
kind is done.
Letting $t = 1+q$ we give a bijective 
argument
showing the
$(q,t)$-Stirling numbers of the first
and second kind are orthogonal.
\end{abstract}

\section{Introduction}
\label{section_introduction}

The idea of $q$-analogues can be traced back 
to Euler in the 1700's who was studying 
$q$-series, especially specializations of theta functions.
The Gaussian polynomial or $q$-binomial
is
the familiar $q$-analogue of the binomial coefficient
given by
$\qchoose{n}{k}{q}  = \frac{[n]_q!}{[k]_q! [n-k]_q!}$,
where
$[n]_q = 1 + q + \cdots + q^{n-1}$
and
$[n]_q ! = [1]_q \cdot [2]_q  \cdots [n]_q$.
A combinatorial interpretation due to MacMahon in 
1916~\cite[Page 315]{MacMahon}
is
$$
   \sum_{\pi \in \Sym(0^{n-k},1^k)} q^{\inv(\pi)} = \qchoose{n}{k}{q}.
$$
Here 
$\Sym(0^{n-k},1^k)$ denotes the number of
$0$-$1$ bit strings  consisting of 
$n-k$ zeroes
and
$k$ ones,
and for
$\pi = \pi_1 \cdots \pi_n \in \Sym(0^{n-k},1^k)$ 
the number of inversions 
is
$\inv(\pi) = | \{(i,j):  i < j \mbox{  and  } \pi_i > \pi_j\}|$.
The inversion statistic goes back to work of
Cramer (1750), B\'ezout (1764) and Laplace (1772).
See the discussion in~\cite[Page 92]{Netto}.
Netto enumerated
the elements of the symmetric group by the inversion
statistic in 1901~\cite[Chapter 4, Sections 54 and 57]{Netto},
and in 1916 MacMahon~\cite[Page 318]{MacMahon} gave the $q$-factorial
expansion
$\sum_{\pi \in \Sym_n} q^{\inv(\pi)} 
= [n]_q !$.

Recent work of 
Fu--Reiner--Stanton--Thiem~\cite[Theorem 1]{Fu_Reiner_Stanton_Thiem}
has expressed the classical $q$-binomial in terms
of a pair of statistics over a {\em subset}
of 
$\Sym(0^{n-k},1^k)$
using powers of $q$ and $1+q$:
\begin{equation}
\label{equation_Stanton_friends}
    \qchoose{n}{k}{q} = \sum_{\pi \in \Omega(n,k)'}
                           q^{a(\omega)} \cdot (1+q)^{p(\omega)}.
\end{equation}
They show this
$q$-$(1+q)$-binomial
is related to Ennola duality for finite unitary groups
and that it counts unitary 
subspaces~\cite[Sections 4 and 6.2]{Fu_Reiner_Stanton_Thiem}.
A two-variable version
exhibits a cyclic sieving 
phenomenon involving unitary 
spaces~\cite[Sections 4 and 5]{Fu_Reiner_Stanton_Thiem}.

It is from the $q$-binomial result~(\ref{equation_Stanton_friends})
 that we 
springboard our work.
Our first goal is enumerative, that is,
to discover more compact encodings of classical
$q$-analogues:

\begin{goal}
Given a $q$-analogue
$$
    f(q) =  \sum_{w \in S} q^{\sigma(w)},
$$
for some statistic
$\sigma(\cdot)$,
find a subset $T \subseteq S$
and statistics
$A(\cdot)$ and $B(\cdot)$ so that
the $q$-analogue
may be expressed as
\begin{equation}
\label{equation_goal_one}
    f(q) = \sum_{w \in T} q^{A(w)} \cdot (1+q)^{B(w)}.
\end{equation}
\label{goal_one}
\end{goal}
For the $q$-Stirling numbers of the first and
second kinds, we develop their $q$-$(1+q)$-analogues.
Furthermore, we are able to
understand these
$q$-$(1+q)$-analogues 
via enumerative, poset theoretic and topological viewpoints.
These leads to the following expanded goal:

\begin{goal}
Given a $q$-analogue
which can be written compactly 
as a $q$-$(1+q)$-analogue 
as in~(\ref{equation_goal_one}),
find poset theoretic and homological reasons
to explain this phenomenon.
\label{goal_two}
\end{goal}

This paper proceeds as follows.
In Section~\ref{section_RG} 
we recall the notion
of restricted growth words or
$RG$-words to encode set partitions. A weighted version
yields the usual $q$-Stirling numbers of the second kind;
see Lemma~\ref{lemma_q_Stirling_second}.
In Section~\ref{section_allowable}
we describe a subset of $RG$-words,
which we call {\em allowable},
whose weighting gives the
$q$-Stirling numbers of the second kind and hence 
a more compact presentation of the $q$-Stirling numbers
of the second kind;  see
Theorem~\ref{theorem_q_Stirling_second_allowable}.

We then take a poset theoretic viewpoint
in Section~\ref{section_Morse}
where we introduce the Stirling poset of the second kind
$\Pi(n,k)$.
Its rank generating function is precisely the
$q$-Stirling number $\qStirling{n}{k}$.
Using discrete Morse theory, we show
in Theorem~\ref{theorem_54} that 
the Stirling poset of the second kind has an acyclic matching.
In Section~\ref{section_topology_of_poset}
we give a decomposition of the Stirling poset into Boolean algebras
with the minimal element of each Boolean algebra
corresponding to an allowable $RG$-word;
see Theorem~\ref{theorem_Boolean_algebra_decomposition}.
A generating function for the $q$-analogue of critical cells
is provided.

In Section~\ref{section_homological_Stembridge}
we review the notion of an algebraic complex supported
on a poset.
In Theorem~\ref{theorem_homology_basis_Stirling_second}
we show that
the Stirling poset $\Pi(n,k)$ supports an algebraic complex
and give a basis for the integer homology,
all of which occurs in even
dimensions.
We give two proofs of this result.  The first
uses  Hersh, Shareshian and
Stanton's homological interpretation of Stembridge's 
$q=-1$ phenomenon,
while the second is an elementary proof using
the poset decomposition in Section~\ref{section_topology_of_poset}.

In Section~\ref{section_first_kind}  we review the
de~M\'edicis--Leroux rook placement interpretation of the $q$-Stirling
numbers of the first kind.  
In Theorem~\ref{theorem_q_Stirling_first} we show
a subset of these boards,
with the appropriate weighting, yields 
a compact
representation of the $q$-Stirling number of the first kind.
In Section~\ref{section_Stirling_first_poset}
we introduce the Stirling poset of the 
first kind~$\Gamma(m,n)$
whose rank generating function is precisely the
$q$-Stirling number $c_q[n,k]$.  Again, a
decomposition of this graded poset is given.
We show the
Stirling poset of the first kind supports an
algebraic complex and 
describe
a basis for the integer homology
which occurs in
even dimensions.
See Theorems~\ref{theorem_decomposition_Stirling_poset_first}
and~\ref{theorem_algebraic_complex_first_kind}.
In Section~\ref{section_orthogonality}
we introduce $(q,t)$-analogues of the Stirling numbers of the
first and second kinds
and show orthogonality holds combinatorially.
We end with concluding remarks.

\section{$RG$-words}
\label{section_RG}

Recall a {\em set partition} 
of the $n$ elements
$\{1, 2, \ldots, n\}$ is a decomposition
of this set into mutually disjoint 
nonempty sets called blocks.
Unless otherwise indicated, throughout
all set partitions will
be written in standard form, that is,
a partition into $k$ blocks will
be denoted by
$\pi = B_1/ B_2/ \cdots / B_k$,
where the blocks are ordered so that 
$\min(B_1) < \min(B_2) < \cdots < \min(B_k)$.
We denote the set of all partitions of $\{1, 2, \ldots, n\}$
by $\Pi_n$.

Given a partition $\pi \in \Pi_n$,
we encode it using a {\em restricted growth word} 
$w(\pi) = w_1 w_2 \cdots w_n$,
where~$w_i = j$ if the element $i $ occurs in the $j$th block $B_j$ of $\pi$. 
For example, the partition
$\pi = 14/236/57$ has $RG$-word
$w = w(\pi) = 1221323$.
Restricted growth words are also known as restricted growth functions.
Recall
a {\em restricted growth function }
$f: \{1, 2, \ldots, n\} \longrightarrow \{1, 2, \ldots, k\}$
is a surjective map which satisfies
$f(1) = 1$ and 
$f(i) \leq \max(f(1), f(2), \ldots, f(i-1)) + 1$
for $i = 2, 3, \ldots, n$.
They
have been studied by 
Hutchinson~\cite{Hutchinson}
and
Milne~\cite{Milne_restricted,Milne_q_analog}.

Two facts about $RG$-words
follow immediately from using the standard form for set partitions.

\begin{proposition}
\label{proposition_RG_word_properties}
The following properties are satisfied by $RG$-words:
\begin{enumerate}

\vspace{-2mm}
\item
\label{$i$}
Any $RG$-word begins with the element $1$.

\item
\label{$ii$}
For an $RG$-word $w$ let 
$\epsilon(j)$ be the smallest  index such
that $w_{\epsilon(j)} = j$.
Then the
$\epsilon(j)$ form an increasing sequence,
that is,
$$
     \epsilon(1) < \epsilon(2) < \cdots.
$$
\end{enumerate}
\end{proposition}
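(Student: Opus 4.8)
The plan is to translate both assertions, which concern the word $w = w_1 \cdots w_n$, into equivalent assertions about the block minima of the underlying partition $\pi = B_1 / B_2 / \cdots / B_k$, using the defining relation $w_i = j$ if and only if $i \in B_j$. The standard form hypothesis $\min(B_1) < \min(B_2) < \cdots < \min(B_k)$ will then do all the work. The key structural observation I would isolate first is the identity $\epsilon(j) = \min(B_j)$, from which both parts follow almost immediately.

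For part~(\ref{$i$}), I would argue that since $1$ is the least element of $\{1, 2, \ldots, n\}$, it is in particular no larger than any block minimum, so $\min(B_1) = 1$ because $\min(B_1)$ is the smallest of all the block minima by the standard form ordering. Hence $1 \in B_1$, and by the defining relation $w_1 = 1$. Next I would establish the identity $\epsilon(j) = \min(B_j)$: by definition $\epsilon(j)$ is the least index $i$ with $w_i = j$, and by the defining relation this is the least index $i$ with $i \in B_j$, which is exactly $\min(B_j)$.

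With this identity in hand, part~(\ref{$ii$}) is a direct consequence of the standard form: the inequalities $\epsilon(1) < \epsilon(2) < \cdots$ are literally the inequalities $\min(B_1) < \min(B_2) < \cdots$ that define the standard form. I would also remark that the identity $\epsilon(j) = \min(B_j)$ simultaneously shows that each value $j \in \{1, \ldots, k\}$ actually occurs in $w$ (so $\epsilon(j)$ is well defined for each block), since every block is nonempty.

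Honestly there is no serious obstacle here; the proposition is flagged in the text as following ``immediately'' from the standard form convention. The only point requiring a moment's care is making the translation between positions in the word and elements of blocks fully explicit, i.e.\ recognizing that ``the first occurrence of the letter $j$'' and ``the smallest element of the $j$th block'' are the same integer. Once the identity $\epsilon(j) = \min(B_j)$ is recorded, both parts are immediate, so I would keep the write-up short and let that identity carry the argument.
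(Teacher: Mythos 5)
Your proof is correct and is exactly the argument the paper leaves implicit: the paper offers no written proof, stating only that both facts ``follow immediately from using the standard form,'' and your identity $\epsilon(j) = \min(B_j)$ is precisely the observation that makes that immediacy explicit. Nothing is missing, and your write-up is appropriately brief.
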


The {\em $q$-Stirling numbers of the second kind} 
are defined by
\begin{equation}
\label{equation_recurrence_for_q_Stirling_second_kind}
   \qStirling{n}{k} = \qStirling{n-1}{k-1} 
                      + [k]_q \cdot \qStirling{n-1}{k},
                            \mbox{     for } 1 \leq k \leq n,
\end{equation}
with boundary conditions
$\qStirling{n}{0} = \delta_{n,0}$ 
and
$\qStirling{0}{k} = \delta_{0,k}$,
where
$\delta_{i,j}$ is the usual Kronecker delta function.
Setting $q=1$ gives the familiar Stirling number of the second kind
$S(n,k)$ which enumerates the number of partitions
$\pi \in \Pi_n$ with exactly $k$ blocks.
There is a long history of studying 
set partition statistics~\cite{Garsia_Remmel,Leroux,Rota}
and $q$-Stirling numbers~\cite{Carlitz,Ehrenborg_Readdy_juggling,Gould,Milne_q_analog,Wachs_White}.

We begin by presenting a statistic on $RG$-words which generates
the $q$-Stirling numbers of the second kind.
Let 
$\rg{n}{k}$ denote the set of all $RG$-words of length $n$ with 
maximum letter $k$, which corresponds to set partitions of $\{1, 2, \ldots, n\}$ into $k$ blocks.
For $w \in \rg{n}{k}$, let $m_{i} = \max(w_1, w_2, \ldots, w_{i})$ and form the weight
$\wt(w) = \prod_{i=1}^n \wt_i(w)$,
where $\wt_1(w) = 1$ and for
$2 \leq i \leq n$, let
\begin{align}
\label{equation_Stirling_second_weight}
\wt_i(w) =
      \left\{  \begin{array}{lc}
              q^{w_i-1}        &\mbox{if $m_{i-1} \geq w_i$},\\
               1             &\mbox{if $m_{i-1} < w_i$}.
               \end{array}
      \right.
\end{align}
For example, 
$\wt(1221323) 
= 1 \cdot 1 \cdot q^1 \cdot q^0 \cdot 1 \cdot q^1 \cdot q^2 = q^4$. 
In terms of set partitions, 
the weight of $\pi = B_1 / B_2 / \cdots / B_k$ is 
$\wt(\pi) = \prod_{i=1}^k q^{(j-1)\cdot (|B_j|-1)}$.

\begin{proposition}
For $w = w_1 \cdots w_n \in \rg{n}{k}$ the weight is given by
\begin{equation*}
     \wt(w) = q^{\sum_{i=1}^{n} w_i  - n - \binom{k}{2}} .
\end{equation*}
\end{proposition}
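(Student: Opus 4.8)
The plan is to extract the exponent of $q$ in $\wt(w)$ directly from the defining product and then account for exactly which positions fail to contribute their full weight. Reading off~(\ref{equation_Stirling_second_weight}), the exponent equals
$$\sum_{\substack{2 \le i \le n \\ m_{i-1} \ge w_i}} (w_i - 1),$$
so the whole argument reduces to understanding the dichotomy between $m_{i-1} \ge w_i$ and $m_{i-1} < w_i$.

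First I would observe that $m_{i-1} < w_i$ holds precisely when the letter $w_i$ has not yet appeared among $w_1, \ldots, w_{i-1}$; since the restricted growth condition forces the running maximum to increase by at most one, this case is exactly $w_i = m_{i-1} + 1$, i.e.\ position $i$ is the first occurrence of the letter $w_i$. Conversely, $m_{i-1} \ge w_i$ holds exactly when $w_i$ has already appeared earlier. Thus the positions contributing $0$ to the exponent are precisely the first-occurrence positions $\epsilon(1) < \epsilon(2) < \cdots < \epsilon(k)$ supplied by Proposition~\ref{proposition_RG_word_properties}(ii); the index $i=1$ excluded by the weight convention is itself $\epsilon(1)$ and contributes $w_1 - 1 = 0$, so it needs no separate treatment.

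The remaining step is a short bookkeeping identity. Were every position to contribute $w_i - 1$, the exponent would be $\sum_{i=1}^n (w_i - 1) = \sum_{i=1}^n w_i - n$. The positions that instead contribute $0$ are the first occurrences, and by Proposition~\ref{proposition_RG_word_properties}(ii) the letter read at $\epsilon(j)$ is exactly $j$, so the omitted amount totals $\sum_{j=1}^k (j-1) = \binom{k}{2}$. Subtracting yields the exponent $\sum_{i=1}^n w_i - n - \binom{k}{2}$, as desired. I do not expect a genuine obstacle here; the one point needing care is confirming that the new-maximum positions coincide with the first occurrences and carry the values $1, 2, \ldots, k$, which is immediate from the RG-word structure.
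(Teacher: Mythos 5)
Your proof is correct; in fact the paper states this proposition without any proof, treating it as immediate from the definitions. Your argument --- using the restricted growth condition together with Proposition~\ref{proposition_RG_word_properties}(ii) to show that the positions receiving weight $1$ are exactly the first-occurrence positions $\epsilon(1) < \cdots < \epsilon(k)$, which carry the letters $1, 2, \ldots, k$ and hence account for a deficit of $\sum_{j=1}^{k}(j-1) = \binom{k}{2}$ from the naive total $\sum_{i=1}^{n}(w_i - 1) = \sum_{i=1}^{n} w_i - n$ --- is precisely the routine verification the authors omitted, and it agrees with their equivalent set-partition formulation $\wt(\pi) = \prod_{j=1}^{k} q^{(j-1)\cdot(|B_j|-1)}$, where block $j$ contributes $j-1$ for each of its non-minimal elements.
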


\begin{lemma}
The $q$-Stirling number of the second kind is given by
$$
   S_q[n,k] = \sum_{w \in \rg{n}{k}} \wt(w).
$$
\label{lemma_q_Stirling_second}
\end{lemma}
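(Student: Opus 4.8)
The plan is to prove the identity by induction on $n$, showing that the right-hand side $\sum_{w \in \rg{n}{k}} \wt(w)$ satisfies the same defining recurrence~(\ref{equation_recurrence_for_q_Stirling_second_kind}) and the same boundary conditions as $\qStirling{n}{k}$. The engine of the argument is a decomposition of $\rg{n}{k}$ according to the behavior of the final letter $w_n$, obtained by deleting it.

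First I would set up this decomposition. Given $w = w_1 \cdots w_n \in \rg{n}{k}$, let $w' = w_1 \cdots w_{n-1}$ be its length $(n-1)$ prefix, which is again an $RG$-word, and recall $m_{n-1} = \max(w_1, \ldots, w_{n-1})$. There are exactly two cases, governed by whether $w_n$ exceeds $m_{n-1}$. If $w_n > m_{n-1}$, then since the maximum letter of an $RG$-word increases by at most one at each step and $w$ attains maximum $k$, we are forced to have $w_n = k$ and $m_{n-1} = k-1$; hence $w' \in \rg{n-1}{k-1}$. In this case $\wt_n(w) = 1$, so $\wt(w) = \wt(w')$. If instead $w_n \le m_{n-1}$, then the maximum $k$ is already attained in the prefix, so $m_{n-1} = k$ and $w' \in \rg{n-1}{k}$; here $w_n$ ranges freely over $\{1, 2, \ldots, k\}$ and contributes the factor $\wt_n(w) = q^{w_n - 1}$.

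Next I would assemble the recurrence from these two cases. The first case is a weight-preserving bijection between the new-maximum words and $\rg{n-1}{k-1}$, contributing $\sum_{w' \in \rg{n-1}{k-1}} \wt(w')$. In the second case each prefix $w' \in \rg{n-1}{k}$ extends in $k$ ways, and summing $q^{w_n - 1}$ over $w_n = 1, \ldots, k$ gives $\sum_{j=1}^{k} q^{j-1} = [k]_q$; thus this case contributes $[k]_q \sum_{w' \in \rg{n-1}{k}} \wt(w')$. Adding the two contributions and applying the inductive hypothesis to each sum reproduces~(\ref{equation_recurrence_for_q_Stirling_second_kind}) exactly. For the base cases, the empty word is the unique element of $\rg{0}{0}$ with weight the empty product $1$, matching $\qStirling{0}{0} = 1$; the set $\rg{0}{k}$ is empty for $k > 0$, and $\rg{n}{0}$ is empty for $n \ge 1$ since every $RG$-word begins with $1$ by Proposition~\ref{proposition_RG_word_properties}, so both cases agree with the Kronecker-delta boundary conditions.

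I do not expect a serious obstacle here. The only point demanding care is the case analysis on the final letter: one must observe that a \emph{new} maximal letter is forced to equal $k$ (so that the prefix has maximum exactly $k-1$), while a non-maximal final letter forces the prefix to already have maximum $k$. This dichotomy is precisely what cleanly separates the $\qStirling{n-1}{k-1}$ term from the $[k]_q \cdot \qStirling{n-1}{k}$ term, and verifying that the weight $\wt_n(w)$ behaves correctly in each case is the crux that makes the induction close.
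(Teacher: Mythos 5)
Your proposal is correct and matches the paper's own proof essentially step for step: both induct via the map that deletes the last letter, identify the new-maximum case as a weight-preserving bijection onto $\rg{n-1}{k-1}$, and observe that the $k$ possible extensions of a word in $\rg{n-1}{k}$ contribute $1 + q + \cdots + q^{k-1} = [k]_q$. Your version is, if anything, slightly more explicit than the paper's in spelling out the dichotomy $w_n > m_{n-1}$ versus $w_n \leq m_{n-1}$ and in verifying the boundary conditions, but the argument is the same.
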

\begin{proof}
We show 
$RG$-words $w \in \rg{n}{k}$ satisfy the
recurrence~(\ref{equation_recurrence_for_q_Stirling_second_kind}).
Given an $RG$-word 
$w = w_1 w_2 \cdots w_n \in \rg{n}{k}$, 
consider the map $\varphi$
defined by removing the last letter of the word, 
that is,
$\varphi(w) =  w_1 w_2 \cdots w_{n-1}$.
Clearly $\varphi : \rg{n}{k} \longrightarrow
\rg{n-1}{k-1} \cupdot \rg{n-1}{k}$.
If the only occurrence 
of the maximum  letter~$k$ in the word $w$
is the $n$th position, that is, $w_n = k$, then 
these words
are in bijection with
the set
$\rg{n-1}{k-1}$.
Otherwise, $\varphi(w)$ is of length $n-1$ and all
the letters from $\{1, 2, \ldots, k\}$ occur at least once
in $\varphi(w)$.
In the first case
$\wt(\varphi(w)) = \wt(w)$.
In the second case the letter $k$ occurs more than once in $w$.
Given
$w' = w_1 w_2 \cdots w_{n-1} \in \rg{n-1}{k}$ there
are $k$ possibilities for the $n$th letter $x$ in the inverse image
$\varphi^{-1}(w') = w_1 w_2 \cdots w_{n-1} x$,  namely,
$x \in \{1, 2, \ldots, k\}$.  
Each possibility respectively contributes 
$1, q^1, \ldots,  q^{k-1}$ to the weight,
giving a total weighting contribution of
$[k]_q$.
\end{proof}

See Table~\ref{table_RG} for 
the $RG$-word computation of the $q$-Stirling number $\qStirling{4}{2}$.

\begin{table}[t]
\begin{center}
\begin{tabular}{|c|c|c|}
\hline
Partition &$RG$-word $w$  &$\wt(w)$\\ \hline \hline
$1/234$   &$1222$   &$1\cdot 1 \cdot q \cdot q = q^2$\\ \hline
$12/34$   &$1122$   &$1\cdot 1 \cdot 1 \cdot q = q$ \\ \hline
$13/24$   &$1212$   &$1\cdot 1 \cdot 1 \cdot q = q$\\ \hline
$14/23$   &$1221$   &$1\cdot 1 \cdot q \cdot 1 = q$\\ \hline
$134/2$   &$1211$   &$1\cdot 1 \cdot 1 \cdot 1 = 1$\\ \hline
$124/3$   &$1121$   &$1 \cdot 1 \cdot 1 \cdot 1 = 1$\\ \hline
$123/4$   &$1112$   &$1 \cdot 1 \cdot 1 \cdot 1 = 1$ \\ \hline
\end{tabular}
\end{center}
\caption{Using $RG$-words to compute $S_q[4,2] = q^2 + 3q + 3$.}
\label{table_RG}
\end{table}

\section{Allowable $RG$-words}
\label{section_allowable}

Mirroring $q$-$(1+q)$-binomial,
in this section we
define a subset of $RG$-words
and two statistics
$A(\cdot)$ and $B(\cdot)$ 
which generate
the classical $q$-Stirling number of the second kind 
as a polynomial in $q$ and $1+q$.
We will
see in Sections~\ref{section_Morse} through
\ref{section_homological_Stembridge}
that this has
poset and topological implications.

\begin{definition}
An $RG$-word $w \in \rg{n}{k}$ is
{\em allowable} if every even entry
appears exactly once.
Denote by
$\allow{n}{k}$ the set
of all allowable $RG$-words in
$\rg{n}{k}$.
\end{definition}

Another way to state that
$w \in  \rg{n}{k}$ is
an allowable $RG$-word
is that it is an initial segment of
an infinite word of the form
$$
    w = u_1  \cdot 2 \cdot u_3 \cdot 4 \cdot u_5 \cdots, 
$$
where $u_{2i - 1}$ is a  word on the alphabet
of the odd integers $\{1, 3, \ldots, 2i-1\}$.
In terms of set partitions, an
$RG$-word is allowable if in the corresponding set partition every even indexed
block is a singleton block.
See Table~\ref{table_allowable}.

For an $RG$-word $w = w_1 \cdots w_n $ define
$\negwt(w) = \prod_{i=1}^n \negwt_i(w)$,
where for $m_{i} = \max(w_1, \ldots, w_{i})$
\begin{align}
   \negwt_i(w) =
      \left\{  \begin{array}{ll}
              q^{w_{i}-1} \cdot (1+q)  
              &\mbox{if $m_{i-1} > w_i$}, \\
              q^{w_{i}-1}  
              &\mbox{if $m_{i-1} = w_i$},\\
              1             
              &\mbox{if $m_{i-1} < w_i$ or $i = 1$}.
               \end{array}
      \right.
\end{align}
For completeness, we decompose the $\negwt$
statistic into two  statistics on
$RG$-words.
Let
\begin{align}
   A_i(w) =
      \left\{  \begin{array}{ll}
              w_{i}-1
              &\mbox{if $m_{i-1} \geq w_i$}, \\
              0             
              &\mbox{if $m_{i-1} < w_i$ or $i = 1$},
               \end{array}
      \right.
      \:\:\:\:
      \mbox{     and     }
      \:\:\:\:
   B_i(w) =
      \left\{  \begin{array}{ll}
              1
              &\mbox{if $m_{i-1} > w_i$}, \\
              0             
              &\mbox{otherwise}.
               \end{array}
      \right.
\end{align}
Define 
$$
   A(w) = \sum_{i=1}^n A_i(w)
      \:\:\:\:
  \mbox{      and      }
      \:\:\:\:
   B(w) = \sum_{i=1}^n B_i(w).
$$

\begin{theorem}
\label{theorem_q_Stirling_second_allowable}
The $q$-Stirling numbers of the second kind can be expressed as a weighting
over the set of allowable $RG$-words as follows:
\begin{align}
\label{equation_allowable_Stirling_second_kind}
    \qStirling{n}{k} = \sum_{w \in \allow{n}{k}} \negwt(w)
                     =  \sum_{w \in \allow{n}{k}} q^{A(w)} \cdot (1+q)^{B(w)}.
\end{align}
Hence evaluating the $q$-Stirling number at $q=-1$
gives the number of weakly increasing allowable words in
$\allow{n}{k}$.
\end{theorem}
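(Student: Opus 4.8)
The plan is to establish the identity in Theorem~\ref{theorem_q_Stirling_second_allowable} by showing that the allowable $RG$-words with the $\negwt$ statistic satisfy the same recurrence~(\ref{equation_recurrence_for_q_Stirling_second_kind}) as the $q$-Stirling numbers of the second kind, together with the matching boundary conditions. This mirrors the proof strategy already used in Lemma~\ref{lemma_q_Stirling_second}. Concretely, I would set $T_q[n,k] = \sum_{w \in \allow{n}{k}} \negwt(w)$ and prove $T_q[n,k] = T_q[n-1,k-1] + [k]_q \cdot T_q[n-1,k]$, after which induction on $n$ finishes the argument.

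\textbf{The combinatorial decomposition.} First I would analyze the last-letter deletion map $\varphi(w) = w_1 \cdots w_{n-1}$ restricted to $\allow{n}{k}$, and I must verify that it sends allowable words to allowable words and respects the singleton-even-block structure. The key case split is on the last letter $w_n$. If $w_n = k$ and this is the first occurrence of $k$ (so $k$ is new), then $\varphi(w) \in \allow{n-1}{k-1}$; since appending a strictly-new maximum contributes $\negwt_n(w) = 1$, these words contribute exactly $T_q[n-1,k-1]$. Otherwise $\varphi(w) \in \allow{n-1}{k}$, and I must enumerate the possible last letters $x$ that keep $w$ allowable. Here the allowability constraint bites: an even value may be appended only if it is a brand-new maximum (else it would repeat an even entry), whereas odd values already present may repeat freely. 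I would check that the surviving contributions from appending $x \in \{1,2,\ldots,k\}$, weighted by $\negwt_n$, still sum to $[k]_q$.

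\textbf{The main obstacle.} The delicate point is verifying that the weighting contribution over the inverse image $\varphi^{-1}(w')$ for a fixed $w' \in \allow{n-1}{k}$ telescopes back to exactly $[k]_q = 1 + q + \cdots + q^{k-1}$, despite the $(1+q)$ factors in $\negwt_n$ and the allowability restriction forbidding repeated even letters. The heuristic is that each appended odd letter $x$ that repeats an existing value contributes $q^{x-1}(1+q)$ via the $m_{n-1} > w_n$ case, each value equal to the current maximum contributes $q^{x-1}$, and each new-maximum value contributes $1$; I expect the allowability constraint to remove precisely the even-letter repetitions so that the remaining geometric-type sum collapses. I would carry out this bookkeeping carefully on a small example first, such as recomputing $S_q[4,2] = q^2 + 3q + 3$ from Table~\ref{table_allowable}, to confirm the factors of $(1+q)$ expand correctly.

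\textbf{The $q=-1$ consequence.} Once the polynomial identity $\qStirling{n}{k} = \sum_{w \in \allow{n}{k}} q^{A(w)}(1+q)^{B(w)}$ is established, the final claim is immediate. Setting $q = -1$ annihilates the factor $(1+q)^{B(w)}$ unless $B(w) = 0$, so only words with $B(w) = 0$ survive. Since $B(w) = \sum_i B_i(w)$ counts positions where $m_{i-1} > w_i$, the condition $B(w) = 0$ means no letter is ever strictly smaller than the running maximum, i.e.\ each $w_i$ either equals or exceeds all previous letters, which is exactly the condition that $w$ is weakly increasing. For such surviving words $A(w) = \sum_i A_i(w)$ also vanishes termwise (the $m_{i-1} \geq w_i$ case forces $w_i = m_{i-1}$, hence $A_i(w) = w_i - 1$, which I must check contributes $q^0 = 1$ at $q = -1$ only after confirming the residual power of $q$ is even; in fact the cleanest route is to observe $q^{A(w)} \to \pm 1$ but the weakly increasing allowable words each contribute $+1$). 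Thus $\qStirling{n}{k}\big|_{q=-1}$ counts precisely the weakly increasing allowable words in $\allow{n}{k}$.
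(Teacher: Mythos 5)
Your proposal follows the paper's proof essentially verbatim: both establish the recurrence $\qStirling{n}{k} = \qStirling{n-1}{k-1} + [k]_q \cdot \qStirling{n-1}{k}$ for $\sum_{w} \negwt(w)$ by deleting the last letter, observing that allowability forbids appending a repeated even letter, so the surviving appended-letter weights telescope to $[k]_q$ exactly via the parity split on $k$ that you anticipate (odd repeats below the maximum give $q^{x-1}(1+q)$, a repeated odd maximum gives $q^{k-1}$ when $k$ is odd, a new maximum gives $1$). The one loose end you flag in the $q=-1$ step closes in one line: any repeated letter in an allowable word is odd, so each nonzero $A_i(w) = w_i - 1$ in a weakly increasing word is even, hence $A(w)$ is even and $q^{A(w)} = +1$ at $q = -1$.
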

\begin{proof}
We proceed by induction on $n$ and $k$.
Clearly the result holds 
for $\qStirling{n}{1}$ and 
$\qStirling{n}{n}$ as
the corresponding allowable words 
are $1 1 \cdots 1$ and $12 \cdots n$, each of weight $1$.

For the general case it is enough to show 
that~(\ref{equation_allowable_Stirling_second_kind})
satisfies the defining
relation~(\ref{equation_recurrence_for_q_Stirling_second_kind})
for the $q$-Stirling numbers of the second kind.
We first consider the case when $k$ is even.  
We split the allowable words according to the value of the
last letter, that is,
we write $w = u \cdot w_n$.
Observe that
$\negwt(w) = \negwt(u) \cdot \negwt_n(w)$.
We have
\begin{align*}
     \sum_{w \in \allow{n}{k}} \negwt(w)
     &=
     \sum_{\substack{
                     u \in \allow{n-1}{k-1} \\
                     w_n = k \\
                     m_{n-1} = k-1}}
           \negwt(u) \cdot \negwt_n(w)
     +
     \sum_{\substack{
                     u \in \allow{n-1}{k} \\
                     w_n < k \\
                     m_{n-1} = k}}
           \negwt(u) \cdot \negwt_n(w) 
     \\
     &=  1 \cdot \qStirling{n-1}{k-1} + 
          ((1+q) + q^2 \cdot (1+q) + \cdots + 
          q^{k-2} \cdot (1+q)) \cdot \qStirling{n-1}{k}\\
      &=  \qStirling{n-1}{k-1} + [k]_q \cdot \qStirling{n-1}{k}.
\end{align*} 
where in the second sum the last letter
$w_n$ is odd.
For the case when $k$ is odd there is a similar computation, 
except then there are three cases:
\begin{align*}
     \sum_{w \in \allow{n}{k}}
          \negwt(w)
     &=
     \sum_{\substack{
                     u \in \allow{n-1}{k-1} \\
                     w_n = k  \\
                     m_{n-1}  = k-1}}
           \negwt(u) \cdot \negwt_n(w)
     +
     \sum_{\substack{
                     u \in \allow{n-1}{k-1} \\
                     w_n = k \\
                     m_{n-1} = k}}
           \negwt(u) \cdot \negwt_n(w)
\\
   &  +
     \sum_{\substack{
                     u \in \allow{n-1}{k-1} \\
                     w_n < k \\
                     m_{n-1}  = k}}
           \negwt(u) \cdot \negwt_n (w).
\end{align*}
Here in the second and third sums the last letter $w_n$ is odd.
In both parity cases for $k$, 
the result is equal to the $q$-Stirling 
number of the second kind $\qStirling{n}{k}$, as desired.   
\end{proof}

\begin{table}[tb]
\begin{center}
{\small
\begin{tabular}{|l|lc||l|lc|}
\hline
                &$w$   &$\wt'(w)$
&               &$w$   &$\wt'(w)$\\  \hline \hline
$\allow{1}{1}$  & $1$  & $1$
& $\allow{5}{3}$  & $12311$  &$(1+q)^2$\\   \cline{1-3}
$\allow{2}{1}$  & $11$ &$1$
&                & $12131$    &$(1+q)^2$\\   \cline{1-3} 
$\allow{2}{2}$  & $12$ &$1$
&                & $12113$    &$(1+q)^2$\\ \cline{1-3} 
$\allow{3}{1}$  & $111$ &$1$
&               & $12133$     &$(1+q)\cdot q^2$\\ \cline{1-3} 
$\allow{3}{2}$  & $121$  &$1+q$ 
&                & $12313$    &$(1+q)\cdot q^2$\\ 
                & $112$  &$1$   
&                & $12331$    &$q^2\cdot (1+q)$\\    \cline{1-3} 
$\allow{3}{3}$  & $123$ &$1$
&                & $12333$    &$q^2 \cdot q^2$ \\   \cline{1-3} 
$\allow{4}{1}$  & $1111$ &$1$
&                & $11213$    &$(1+q)$\\   \cline{1-3} 
$\allow{4}{2}$  & $1211$ &$(1+q)^2$
&                & $11231$  &$(1+q)$\\
                & $1121$ &$(1+q)$
&                & $11233$  &$q^2$\\
                & $1112$ & $1$
&                & $11123$   &$1$\\  \hline
$\allow{4}{3}$  & $1213$ & $(1+q)$
&$\allow{5}{4}$  & $12341$  &$(1+q)$\\
                & $1231$ & $(1+q)$
&                & $12343$  &$q^2(1+q)$\\
                & $1233$ & $q^2$
&               & $12134$  &$(1+q)$\\
                & $1123$ & $1$   
&                & $12314$   &$(1+q)$\\   \cline{1-3}
$\allow{4}{4}$  & $1234$  & $1$
&                & $12334$   &$q^2$\\ \cline{1-3}
$\allow{5}{1}$  & $11111$ & $1$
&                & $11234$   &$1$\\ \cline{1-3} \cline{4-6}
$\allow{5}{2}$  & $12111$  & $(1+q)^3$
&$\allow{5}{5}$  & $12345$  &$1$\\   \cline{4-6}
               &$11211$  & $(1+q)^2$
\\
               & $11121$  & $(1+q)$
\\
                & $11112$  & $1$  \\ \cline{1-3}
\end{tabular}
}
\end{center}
\caption{Allowable $RG$-words in $\allow{n}{k}$ 
and their weight for $1 \leq k \leq n \leq 5$.}
\label{table_allowable}
\end{table}

See Table~\ref{table_allowable} for the allowable $RG$-words
for $1 \leq n \leq 5$.

Denote by $a(n,k) = |\alwordset(n,k)|$ the cardinality of allowable words, 
and call it the 
{\em allowable Stirling number of the second kind}.  
The following holds.

\begin{proposition}
The allowable Stirling numbers of the second kind satisfy the recurrence
$$
     a(n,k) = a(n-1, k-1)
                   + \left\lceil k \slash 2 \right\rceil \cdot a(n-1, k)
    \:\:\:\:
    \mbox{   for $n \geq 1$ and $1 \leq k \leq n$},
$$
with the boundary conditions $a(n,0) = \delta_{n,0}$.
\end{proposition}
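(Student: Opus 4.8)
The plan is to prove the recurrence bijectively by restricting the last-letter-removal map $\varphi$ from the proof of Lemma~\ref{lemma_q_Stirling_second} to the set of allowable words. Given $w = w_1 \cdots w_n \in \allow{n}{k}$, I would write $w = u \cdot w_n$ with $u = w_1 \cdots w_{n-1}$ and split into two cases according to whether the maximum letter $k$ already occurs in $u$.

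First I would treat the case where the final letter $w_n = k$ is the only occurrence of $k$ in $w$, so that $u$ has maximum letter $k-1$. Deleting this singleton occurrence of $k$ clearly preserves allowability, since the multiplicities of all other letters (in particular all even letters) are unchanged. Conversely, any $u \in \allow{n-1}{k-1}$ extends uniquely to such a $w$ by appending $k$. This yields a bijection accounting for the term $a(n-1,k-1)$.

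Next I would treat the case where $k$ already appears in $u$, so that $u \in \allow{n-1}{k}$ and $w_n \le k$. The key observation, which is the crux of the argument, is that $w_n$ must be \emph{odd}: since $u$ has maximum letter $k$ and is an $RG$-word, every letter of $\{1, 2, \ldots, k\}$ already occurs in $u$, so in particular every even letter $\le k$ has already appeared. By allowability, appending any even value would create a second copy of that even letter, which is forbidden; hence $w_n$ ranges over the odd integers in $\{1, 3, \ldots\}$ that are at most $k$. Each such choice of an odd $w_n$ produces a valid allowable word, since it only increases the multiplicity of an odd letter. Counting these odd values gives exactly $\lceil k/2 \rceil$ admissible extensions of each $u \in \allow{n-1}{k}$, contributing the term $\lceil k/2 \rceil \cdot a(n-1,k)$.

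Combining the two cases gives $a(n,k) = a(n-1,k-1) + \lceil k/2 \rceil \cdot a(n-1,k)$, and the boundary condition $a(n,0) = \delta_{n,0}$ follows since the only allowable word with maximum letter $0$ is the empty word of length $0$. I expect the main obstacle to be the careful justification that appending an even letter is impossible in the second case; everything else is the straightforward parity count of odd integers in $\{1, \ldots, k\}$, which yields $\lceil k/2 \rceil$ in both parities of $k$.
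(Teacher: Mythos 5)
Your proof is correct and is essentially the paper's own argument: the paper deletes the element $n$ from the corresponding set partition and observes that $n$ can only be (re)inserted into one of the $\lceil k/2 \rceil$ odd-numbered blocks, which under the partition--$RG$-word correspondence is exactly your deletion of the last letter $w_n$ together with the observation that an appended letter must be odd when $k$ already occurs in the prefix. Your justification of that parity step (surjectivity of $RG$-words onto $\{1,\ldots,k\}$ forces a repeated even entry) is sound, so no gap remains.
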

\begin{proof}
By definition
each allowable word $w \in \alwordset(n, k)$
corresponds to a set partition of $\{1,2, \ldots, n\}$ into
$k$ nonempty subsets where each block with an even label 
has exactly one element in it. Let $\setpart(w)$ be 
the corresponding set partition.

There are two cases.
If $n$ occurs as a singleton block
in $\setpart(w)$, then
after deleting the element $n$ we obtain a 
set partition of the elements $\{1, 2, \ldots, n-1\}$ into $k-1$ blocks.
This corresponds to a word in $\alwordset(n-1, k-1)$.
Otherwise assume the element $n$ occurs in a block  with
more than one element.
We can first build an allowable set partition 
of $\{1, 2, \ldots, n-1\}$ into $k$ blocks and then put 
the element $n$ into one 
of the $k$ blocks. Notice that $n$ can only be placed
into an
odd numbered block,  
so we have $\lceil k \slash 2 \rceil$ 
possible blocks to assign the element $n$. This gives 
$\lceil k\slash 2 \rceil \cdot a(n-1, k)$ possibilities.
\end{proof}

\begin{table}[tb]
\begin{center}
{\small
\begin{tabular}{|r|rrrrrrrrrrr|r|r|}
\hline
$n\backslash k$   &0  &1  &2  &3  &4  &5  &6  &7  &8  &9  &10 
&$a(n)$  &$b(n)$\\
\hline
0   &1  &&&&&&&&&&  &1 &1\\
1   &0  &1  &&&&&&&&&   &1 &1\\
2   &0  &1  &1  &&&&&&&&    &2 &2\\
3   &0  &1  &2  &1  &&&&&&&     &4 &5\\
4   &0  &1  &3  &4  &1  &&&&&&      &9 &15\\
5   &0  &1  &4  &11 &6  &1  &&&&&       &23 &52\\
6   &0  &1  &5  &26 &23 &9  &1  &&&&        &65 &203\\
7   &0  &1  &6  &57 &72 &50 &12 &1  &&&         &199 &877\\
8   &0  &1  &7  &120    &201    &222    &86 &16 &1  &&  &654 &4140\\
9   &0  &1  &8  &247    &522    &867    &480    &150    &20 &1  &   &2296 &21147\\
10  &0  &1  &9  &502    &1291    &3123   &2307   &1080   &230    &25 &1  &8569 &115975\\
\hline
\end{tabular}
}
\caption{The allowable Stirling numbers of the second kind
$a(n,k)$, the
allowable Bell numbers $a(n)$ 
and the classical Bell numbers $b(n)$ for $0 \leq n \leq 10$.}
\label{table_allowable_Stirling_Bell}
\end{center}
\end{table}

We call the sum 
$a(n) = \sum_{k=0}^n a(n,k)$ the
{\em $n$th allowable Bell number}.
See Table~\ref{table_allowable_Stirling_Bell}.
The following properties are straightforward to verify.

\begin{proposition}
The allowable Stirling numbers of the second kind satisfy
\begin{align}
\label{equation_stir2_2}
	a(n,2) &= n-1\,, \\
\label{equation_stir2_n}
    a(n,n-1) &= \Big\lfloor \frac{n}{2} \Big\rfloor \cdot \Big\lceil \frac{n}{2} \Big\rceil\,.
\end{align}
\end{proposition}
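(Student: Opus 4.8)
The plan is to read both identities directly off the recurrence $a(n,k) = a(n-1,k-1) + \lceil k/2 \rceil \cdot a(n-1,k)$ established above, together with the boundary values $a(n,1) = a(n,n) = 1$ (whose unique allowable words are $11\cdots 1$ and $12\cdots n$, each of weight~$1$).

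For~(\ref{equation_stir2_2}) I would specialize the recurrence to $k=2$. Since $\lceil 2/2\rceil = 1$ and $a(n-1,1)=1$, it collapses to $a(n,2) = 1 + a(n-1,2)$ for all $n \geq 2$. Telescoping from the base value $a(1,2)=0$ then gives $a(n,2) = n-1$ immediately, with no parity considerations entering.

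For~(\ref{equation_stir2_n}) I would set $k=n-1$. Using $a(n-1,n-1)=1$, the recurrence becomes $a(n,n-1) = a(n-1,n-2) + \lceil (n-1)/2 \rceil$. Writing $f(n) = a(n,n-1)$ and telescoping from $f(2) = a(2,1) = 1$ yields $f(n) = \sum_{m=1}^{n-1} \lceil m/2 \rceil$. The remaining work is to put this sum in closed form: grouping the summands in consecutive pairs shows that $\sum_{m=1}^{N} \lceil m/2 \rceil$ equals $t(t+1)$ when $N = 2t$ and $(t+1)^2$ when $N = 2t+1$. Substituting $N = n-1$ and separating the cases $n$ even and $n$ odd recovers $\lfloor n/2 \rfloor \cdot \lceil n/2 \rceil$ in both parities. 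The hard part, such as it is, will be this parity bookkeeping; there is no conceptual obstacle, only the casework.

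That casework can in fact be sidestepped by a direct count, which I would include as a remark. A set partition of $\{1,\dots,n\}$ into $n-1$ blocks has exactly one block of size two and $n-2$ singletons, so it is determined by its doubleton $\{a,b\}$ with $a<b$. The block minima are then $\{1,\dots,n\}\setminus\{b\}$, whence the doubleton, whose minimum is $a$, receives index $a$; since it is the unique non-singleton block, allowability forces $a$ to be odd. Therefore $a(n,n-1) = \sum_{a \text{ odd},\, a \leq n-1} (n-a)$, and summing this arithmetic progression over the odd values of $a$ again produces $\lfloor n/2 \rfloor \cdot \lceil n/2 \rceil$. I would present the recurrence computation as the main argument and this bijective count as the conceptual explanation.
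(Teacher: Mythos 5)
Your proposal is correct, but your main line of argument differs from the paper's. The paper proves both identities by direct enumeration of allowable words: for $a(n,2)$ it observes that a word in $\allow{n}{2}$ consists of a single $2$ (which, being even, must appear exactly once) among $n-1$ ones, and since the word starts with $1$ the letter $2$ may occupy any of the $n-1$ remaining positions; for $a(n,n-1)$ it counts the ways to insert one repeated odd letter into the word $12\cdots(n-1)$ after its first occurrence, giving the sum $(n-1)+(n-3)+\cdots = \lfloor n/2\rfloor\cdot\lceil n/2\rceil$ directly. You instead specialize the recurrence $a(n,k)=a(n-1,k-1)+\lceil k/2\rceil\cdot a(n-1,k)$ at $k=2$ and $k=n-1$ and telescope; both computations are valid (the base values $a(n,1)=1$, $a(1,2)=0$, $a(2,1)=1$ are all immediate, as is $a(n,k)=0$ for $k>n$), and your closed-form evaluation of $\sum_{m=1}^{n-1}\lceil m/2\rceil$ by pairing checks out in both parities. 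What the recurrence route buys is that the identities become mechanical consequences of an already-established result, at the cost of parity bookkeeping and of bijective insight; the paper's counts are self-contained and explain the product form $\lfloor n/2\rfloor\cdot\lceil n/2\rceil$ at a glance. Note, however, that your concluding ``remark'' --- choosing the unique doubleton $\{a,b\}$ with $a<b$, observing that its block index is $a$ so allowability forces $a$ odd, and summing $n-a$ over odd $a\le n-1$ --- is exactly the paper's own argument for $a(n,n-1)$, translated from the word-insertion picture into set-partition language (the paper's insertion of the repeated odd letter $a$ after position $a$ corresponds precisely to your choice of $b$), so what you present as the conceptual supplement coincides with the published proof.
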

\begin{proof}
By definition
any $w \in \alwordset(n, 2)$
is a word of length $n$ consisting of exactly $n-1$ 1's and one~$2$.
Since the initial letter must be $1$, there are $n-1$ choices to
assign the location of $2$.
Thus~(\ref{equation_stir2_2}) follows.

For identity~(\ref{equation_stir2_n}) we wish 
to count allowable words of length $n$ with 
maximal entry $n-1$. 
By definition of an allowable word, 
there will be exactly one odd integer that appears twice 
and all other integers appear exactly once in such a word. 
In other words, 
given the word $12\cdots (n-1)$,
we need to insert an odd integer less than or equal to
$n-1$ so that the resulting word is still allowable.
There are  $\lceil (n-1) \slash 2 \rceil = \lceil n \slash 2 \rceil$ 
choices for such an odd integer. 
We can place this odd integer anywhere 
after its initial appearance in the word $12\cdots (n-1)$.
Thus
we have in total 
$(n-1) + (n-3) + \cdots 
+ (n-(2 \cdot \lceil (n-1) \slash 2 \rceil -1)) = 
\lfloor n \slash 2 \rfloor \cdot \lceil n \slash 2 \rceil$
ways to obtain a word in $\alwordset(n, n-1)$.
\end{proof}

Homological underpinnings
of Theorem~\ref{theorem_q_Stirling_second_allowable}
will be discussed in 
Section~\ref{section_homological_Stembridge}.

\section{The Stirling poset of the second kind}
\label{section_Morse}

In order to understand the $q$-Stirling numbers
more deeply, we give a 
poset structure on $\mathcal{R}(n,k)$,
which we
call {\em the Stirling poset of the second kind},
denoted by $\Pi(n,k)$, as follows. 
For $v, w \in \mathcal{R}(n,k)$  let
$ v = v_1 v_2 \cdots v_n \prec w$
if 
$w = v_1 v_2 \cdots(v_i+1) \cdots v_n$
for some index $i$. It is clear that if
$v \prec w$
then $\wt(w)=q\cdot \wt(v)$,
where the weight is as defined in~(\ref{equation_Stirling_second_weight}).
The Stirling poset of the second kind is graded
by the degree of the weight function $\wt$.
Thus
the rank of the poset
$\Pi(n,k)$ is $(n-k)(k-1)$ and its rank generating function is given
by $S_q[n,k]$.
For basic terminology regarding posets, we refer the reader to
Stanley's treatise~\cite[Chapter 3]{Stanley_EC_I}.
See Figures~\ref{figure_Pi_5_2}
and~\ref{figure_Pi_5_3} for two examples of the Stirling poset
of the second kind.

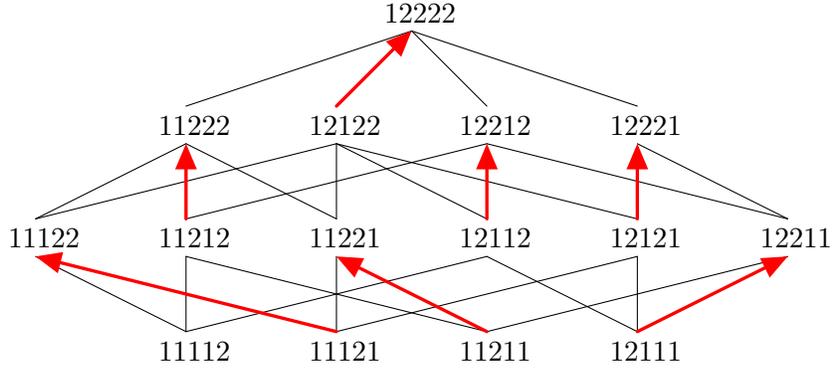
\begin{figure}
[t]
\begin{center}
\begin{tikzpicture}[line cap=round,line join=round,>=triangle 45,x=1.0cm,y=1.0cm]
\clip(-4.5,1.0) rectangle (7.5,6.5);
\draw (1,6) node[anchor=north west] {$12222$};
\draw (-2,4.5) node[anchor=north west] {$11222$};
\draw (0,4.5) node[anchor=north west] {$12122$};
\draw (2,4.5) node[anchor=north west] {$12212$};
\draw (4,4.5) node[anchor=north west] {$12221$};
\draw (-4,3) node[anchor=north west] {$11122$};
\draw (-2,3) node[anchor=north west] {$11212$};
\draw (0,3) node[anchor=north west] {$11221$};
\draw (2,3) node[anchor=north west] {$12112$};
\draw (4,3) node[anchor=north west] {$12121$};
\draw (6,3) node[anchor=north west] {$12211$};
\draw (-2,1.5) node[anchor=north west] {$11112$};
\draw (0,1.5) node[anchor=north west] {$11121$};
\draw (2,1.5) node[anchor=north west] {$11211$};
\draw (4,1.5) node[anchor=north west] {$12111$};
\draw (1.5,5.5)-- (-1.5,4.5);
\draw (1.5,5.5)-- (2.5,4.5);
\draw (0.5,4.0)-- (2.5,3.0);
\draw (1.5,5.5)-- (4.5,4.5);
\draw (-1.5,4.0)-- (-3.5,3.0);
\draw (-1.5,4.0)-- (0.5,3.0);
\draw (0.5,4.0)-- (-3.5,3.0);
\draw (0.5,4.0)-- (4.5,3.0);
\draw (2.5,4.0)-- (6.5,3.0);
\draw (2.5,4.0)-- (-1.5,3.0);
\draw (0.5,4.0)-- (0.5,3.0);
\draw (4.5,4.0)-- (6.5,3.0);
\draw (-3.5,2.5)-- (-1.5,1.5);
\draw (-1.5,2.5)-- (-1.5,1.5);
\draw (-1.5,2.5)-- (2.5,1.5);
\draw (0.5,2.5)-- (0.5,1.5);
\draw (2.5,2.5)-- (-1.5,1.5);
\draw (2.5,2.5)-- (4.5,1.5);
\draw (4.5,2.5)-- (0.5,1.5);
\draw (4.5,2.5)-- (4.5,1.5);
\draw (6.5,2.5)-- (2.5,1.5);
\draw [->,line width=1.25pt,color=red] (0.5,4.5) -- (1.5,5.5);
\draw [->,line width=1.25pt,color=red] (-1.5,3.0) -- (-1.5,4.0);
\draw [->,line width=1.25pt,color=red] (2.5,3.0) -- (2.5,4.0);
\draw [->,line width=1.25pt,color=red] (4.5,3.0) -- (4.5,4.0);
\draw [->,line width=1.25pt,color=red] (0.5,1.5) -- (-3.5,2.5);
\draw [->,line width=1.25pt,color=red] (2.5,1.5) -- (0.5,2.5);
\draw [->,line width=1.25pt,color=red] (4.5,1.5) -- (6.5,2.5);
\end{tikzpicture}
\end{center}
\caption{
The matching of the Stirling poset $\Pi(5,2)$.} 
\label{figure_Pi_5_2}
\end{figure}

We next review the notion of a 
Morse matching~\cite{Kozlov_discrete_Morse, Dmitry_Kozlov}.
This will enable us to find a natural decomposition of
the Stirling poset of the second kind,
and to later be able to draw homological conclusions.
A {\em partial matching} on a poset
$P$ is a matching on the underlying
graph of the Hasse diagram of $P$, that is, 
a subset $M \subseteq P \times P$ satisfying
($i$) the ordered pair
$(a, b) \in M$ implies $a \prec b$,
and
($ii$)
each element $a \in P$ belongs to at most one element
in $M$.
When $(a, b) \in M$, we write
$u(a) = b$ and $d(b) = a$.
A partial matching on $P$ is  {\em acyclic}
if there does not exist a cycle
$$
    a_1 \prec u(a_1) \succ a_2 \prec u(a_2) 
        \succ \cdots \succ a_n \prec u(a_n) \succ a_1
$$
with $n \geq 2$, and the elements $a_1, a_2, \ldots, a_n$ 
are distinct.

An alternate manner is to orient all the edges in the Hasse
diagram of a poset downwards and then reorient all the edges
occurring in the matching upwards.  The acyclic condition
is simply that there is no cycle on the directed
Hasse diagram.  
For the matched edge $(a,b)$ the notation
$u(a) = b$ and $d(b) = a$ denotes the fact that in the edge oriented
from $a$ to $b$ the element $b$ is ``upwards'' from~$a$
and similarly the element $a$ is ``downwards'' from $b$.
One can use the terminology of a 
{\em gradient path} or
{\em $V$-path} consisting
alternatively of matched and unmatched elements from 
the poset~\cite{Forman_Morse_theory}.
A {\em discrete Morse matching} is one where no gradient path
forms a cycle.

We define  a matching $M$ on
the Stirling poset  $\wordposet(n,k)$ 
in the following manner.
Let $w_i$ be the first entry in
$w = w_1 w_2 \cdots w_n \in \mathcal{R}(n,k)$
such that $w$ is weakly decreasing, that is, 
$w_1 \leq w_2 \leq \cdots \leq w_{i-1} \geq w_i$
and where we require the inequality $w_{i-1} \geq w_i$
to be strict unless both $w_{i-1}$ and $w_i$
are even. We have two subcases.
If $w_i$ is even 
then let
$d(w) = w_1 w_2 \cdots w_{i-1} (w_i-1) w_{i+1} \cdots w_n$.
In this case
we have
$\wt(d(w)) = q^{-1} \cdot \wt(w)$.
Otherwise, if $w_i$ is odd then let
$u(w) = w_1 w_2 \cdots w_{i-1} (w_i+1) w_{i+1} \cdots w_n$
and we have
$\wt(u(w)) = q \cdot \wt(w)$.
If $w$ is an allowable word which is weakly increasing, then
$w$ is unmatched in the poset.
Again, we refer to 
Figures~\ref{figure_Pi_5_2}
and~\ref{figure_Pi_5_3}.

\begin{figure}[t]
\begin{center}
\begin{tikzpicture}[line cap=round,line join=round,>=triangle 45,x=1.0cm,y=1.0cm, scale = 0.60]
\clip(-5.5,-4.0) rectangle (11.5,7.5);
\draw (2.0,7.0) node[anchor=north west] {12333};
\draw (-2.0,4.5) node[anchor=north west] {12233};
\draw (2.0,4.5) node[anchor=north west] {12323};
\draw (6.0,4.5) node[anchor=north west] {12332};
\draw (-4.0,2.0) node[anchor=north west] {11233};
\draw (0.0,2.0) node[anchor=north west] {12223};
\draw (2.0,2.0) node[anchor=north west] {12232};
\draw (4.0,2.0) node[anchor=north west] {12313};
\draw (6.0,2.0) node[anchor=north west] {12322};
\draw (8.0,2.0) node[anchor=north west] {12331};
\draw (-2.0,2.0) node[anchor=north west] {12133};
\draw (-5.0,-0.5) node[anchor=north west] {11223};
\draw (-3.0,-0.5) node[anchor=north west] {11232};
\draw (-1.0,-0.5) node[anchor=north west] {12123};
\draw (3.0,-0.5) node[anchor=north west] {12213};
\draw (5.0,-0.5) node[anchor=north west] {12231};
\draw (7.0,-0.5) node[anchor=north west] {12312};
\draw (9.0,-0.5) node[anchor=north west] {12321};
\draw (1.0,-0.5) node[anchor=north west] {12132};
\draw (-3.0,-3.0) node[anchor=north west] {11123};
\draw (-1.0,-3.0) node[anchor=north west] {11213};
\draw (1.0,-3.0) node[anchor=north west] {11231};
\draw (3.0,-3.0) node[anchor=north west] {12113};
\draw (5.0,-3.0) node[anchor=north west] {12131};
\draw (7.0,-3.0) node[anchor=north west] {12311};

\draw (2.7,6.4)-- (-1.2,4.5);
\draw (2.7,6.4)-- (2.7,4.5);
\draw (2.7,6.4)-- (6.7,4.5);

\draw (-1.3,3.9)--(-3.3,2.2);
\draw (-1.3,3.9)-- (0.7,2.0);
\draw (-1.3,3.9)-- (2.7,2.0);
\draw (2.7,3.9)-- (0.7,2.0);
\draw (2.7,3.9)-- (6.7,2.0);
\draw (6.7,3.9)-- (2.7,2.0);
\draw (6.7,3.9)-- (6.7,2.0);
\draw [->,line width=1.25pt,color=red] (4.7,2.0) -- (2.7,3.9);
\draw [->,line width=1.25pt,color=red] (8.7,2.0) -- (6.7,3.9);
\draw [->,line width=1.25pt,color=red] (-1.3,2.0) -- (-1.3,3.9);

\draw (-3.3,1.4)-- (-4.3,-0.5);
\draw (-3.3,1.4)-- (-2.3,-0.5);
\draw (-1.3,1.4)-- (-0.3,-0.5);
\draw (-1.3,1.4)-- (1.7,-0.5);
\draw (0.7,1.4)-- (-4.3,-0.5);
\draw (0.7,1.4)-- (3.7,-0.5);
\draw (2.7,1.4)-- (-2.3,-0.5);
\draw (2.7,1.4)-- (5.7,-0.5);
\draw (4.7,1.4)-- (3.7,-0.5);
\draw (4.7,1.4)-- (7.7,-0.5);
\draw (6.7,1.4)-- (9.7,-0.5);
\draw (8.7,1.4)-- (5.7,-0.5);
\draw (8.7,1.4)-- (9.7,-0.5);
\draw [->,line width=1.25pt,color=red] (-0.3,-0.5) -- (0.7,1.4);
\draw [->,line width=1.25pt,color=red] (1.7,-0.5) -- (2.7,1.4);
\draw [->,line width=1.25pt,color=red] (7.7,-0.5) -- (6.7,1.4);

\draw (-4.1,-1.1)-- (-2.3,-3);
\draw (-0.3,-1.1)-- (-2.3,-3);
\draw (-0.3,-1.1)-- (3.7,-3.0);
\draw (1.7,-1.1)-- (5.7,-3.0);
\draw (3.7,-1.1)-- (-0.3,-3.0);
\draw (5.7,-1.1)-- (1.7,-3.0);
\draw (7.7,-1.1)-- (7.7,-3);
\draw [->,line width=1.25pt,color=red] (7.7,-3.0) -- (9.7,-1.1);
\draw [->,line width=1.25pt,color=red] (-0.3,-3.0) -- (-4.1,-1.1);
\draw [->,line width=1.25pt,color=red] (1.7,-3.0) -- (-2.3,-1.1);
\draw [->,line width=1.25pt,color=red] (3.7,-3.0) -- (3.7,-1.1);
\draw [->,line width=1.25pt,color=red] (5.7,-3.0) -- (5.7,-1.1);
\end{tikzpicture}
\end{center}

\caption{
The Stirling poset $\Pi(5,3)$
and its discrete Morse matching.  
The rank generating function is the
$q$-Stirling number
$S_q[5,3] = q^4 + 3 q^3 + 7 q^2 + 8 q + 6$.
The matched elements are
indicated by arrows. The unmatched elements are $11123$, $11233$ and $12333$,
and the sum of their weights is $1 + q^2 + q^4$.}
\label{figure_Pi_5_3} 
\end{figure}
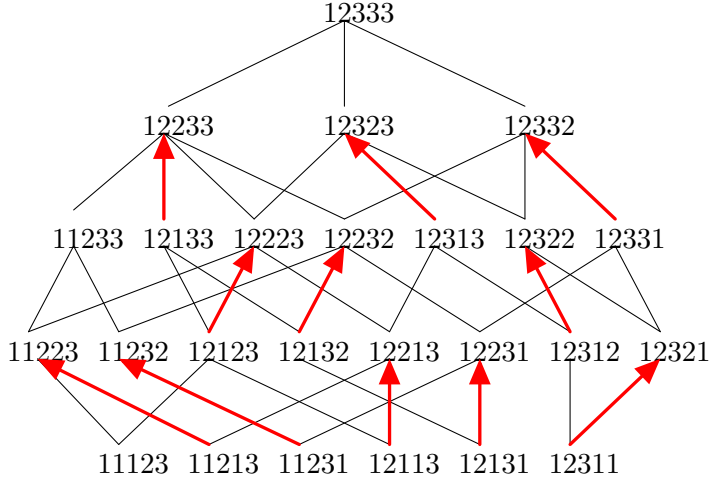

\begin{lemma}
For the  partial matching $M$ described on the poset $\wordposet(n,k)$ the
unmatched words $U(n,k)$ are of the form
\begin{align*}
     w =
         \begin{cases}
             u_1 \cdot 2 \cdot u_3 \cdot 4 \cdot u_5 \cdot 6 \cdots u_{k-1} 
             \cdot k
             &\text{    for $k$ even},\\
             u_1 \cdot 2 \cdot u_3 \cdot 4 \cdot u_5 \cdot 6 \cdots (k-1) 
             \cdot u_{k} 
             &\text{    for $k$ odd},
         \end{cases}
\end{align*}
where 
$u_{2i-1} = (2i-1)^{j_i}$, 
that is,
$u_{2i-1}$
is a word consisting of $j_i \geq 1$ copies of the odd integer $2i-1$.
\end{lemma}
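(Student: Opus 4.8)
The plan is to identify the unmatched words by a direct combinatorial criterion and then read off their shape. Call an index $i \ge 2$ a \emph{descent} of $w = w_1 \cdots w_n$ if either $w_{i-1} > w_i$, or $w_{i-1} = w_i$ with both letters even; this is exactly the condition ``$w_{i-1}\ge w_i$, strict unless both even'' appearing in the definition of $M$. For the smallest descent $i$ the prefix $w_1 \le \cdots \le w_{i-1}$ is automatically weakly increasing, so this $i$ is the position at which the matching acts: its letter is incremented when $w_i$ is odd and decremented when $w_i$ is even. Hence every word having at least one descent is matched, while a word with no descent is left unmatched. (That $M$ is genuinely a matching is elementary: at a first descent one has $w_i \le w_{i-1} \le m_{i-1}$, so changing $w_i$ by one preserves both the restricted-growth inequalities and the running maxima $m_i, m_{i+1}, \dots$, and a short check shows $d$ and $u$ return each other's inputs.) Therefore $U(n,k)$ is precisely the set of words in $\mathcal{R}(n,k)$ having no descent.

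Next I would convert ``no descent'' into structure. The absence of any descent says that for every $i$ we have $w_{i-1} \le w_i$ and we never have $w_{i-1} = w_i$ with both even; equivalently, $w$ is weakly increasing and no even letter occupies two consecutive positions. Since a weakly increasing word keeps equal letters adjacent, the latter condition is equivalent to each even letter occurring at most once. As $w \in \mathcal{R}(n,k)$ is a restricted-growth word with largest letter $k$, every letter of $\{1, \dots, k\}$ appears at least once, so in fact each even letter appears exactly once. This is the allowable condition, and it recovers — and upgrades to an equivalence — the remark preceding the lemma: the unmatched words are exactly the weakly increasing allowable words.

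Finally I would read off the normal form. A weakly increasing word on $\{1, \dots, k\}$ in which every letter occurs and each even letter occurs exactly once must list its letters in the order $1^{j_1}\,2\,3^{j_2}\,4\,5^{j_3}\cdots$, with the odd letter $2i-1$ appearing $j_i \ge 1$ times and each even letter once. Writing $u_{2i-1} = (2i-1)^{j_i}$ and splitting on the parity of $k$ — the word terminates in the singleton $k$ when $k$ is even, and in the block $u_k = k^{\,j_{(k+1)/2}}$ when $k$ is odd — gives exactly the two displayed cases.

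I expect the one genuinely delicate point to be the first paragraph: correctly parsing the ``strict unless both even'' clause so that the first descent is well-defined, and confirming that the matching really acts at this position, so that ``unmatched'' is synonymous with ``no descent.'' Once that equivalence is secured, the remaining two steps are routine bookkeeping about weakly increasing restricted-growth words.
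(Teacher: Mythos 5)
Your proposal is correct and takes essentially the same route as the paper: the paper's proof is a one-sentence observation that the unmatched elements are exactly the $RG$-words that are weakly increasing with no repeated even entries, and your argument simply fills in the details of that observation (first-descent analysis, the equivalence with ``weakly increasing and each even letter exactly once'' via surjectivity, and the parity read-off of the normal form). The extra verification that $M$ is a genuine matching is not needed for the lemma and is likewise omitted by the paper, though if you keep it you should also note that incrementing or decrementing $w_i$ preserves surjectivity onto $\{1,\ldots,k\}$, since $w_i \le m_{i-1}$ forces the affected values to occur already in the prefix.
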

\begin{proof}
The result follows
by observing
the unmatched elements of the Stirling poset $w(n,k)$ consist
of $RG$-words in ${\mathcal R}(n,k)$ which are always increasing
and have no repeated even-valued entries.  
\end{proof}

\begin{lemma}
\label{lemma_gradient_new}
Let $a$ and $b$ be two distinct elements in the Stirling poset of the
second kind $\Pi(n,k)$ such that
$a \prec u(a) \succ b \prec u(b)$.
Then the element $a$ is lexicographically larger than the
element $b$.
\end{lemma}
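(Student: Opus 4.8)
The plan is to compare $a$ and $b$ through their common upper element $c := u(a)$ and to pin down the single coordinate in which each of $a,b$ differs from $c$. Since $a$ is matched upward, by definition $a$ has a first bad position $i$ with $a_i$ odd, and $c = u(a)$ is obtained from $a$ by incrementing that coordinate; thus $c$ and $a$ agree off position $i$, with $c_i = a_i + 1$. Since $u(a) \succ b$ is a covering relation, $b$ is obtained from $c$ by \emph{decrementing} a single coordinate, say position $\ell$, so $c$ and $b$ agree off position $\ell$, with $b_\ell = c_\ell - 1$. As $a \neq b$ we must have $i \neq \ell$. Consequently $a$ and $b$ agree with $c$ — and hence with each other — at every coordinate except $i$ and $\ell$, where $a_i = c_i - 1 < c_i = b_i$ and $a_\ell = c_\ell > c_\ell - 1 = b_\ell$.

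Next I would observe that the lexicographic comparison of $a$ and $b$ is decided at coordinate $\min(i,\ell)$, the first place they disagree. If $\ell < i$ they first differ at $\ell$, where $a_\ell > b_\ell$, giving $b \lex a$ as desired; if instead $i < \ell$ they first differ at $i$, where $a_i < b_i$, giving the wrong inequality. Thus the whole lemma reduces to showing $\ell < i$, i.e. that the coordinate in which $b$ is lowered from $c$ lies strictly to the left of $i$.

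The crux, and the one place requiring genuine care, is to show that $\ell > i$ is incompatible with $b$ being matched upward. First I would verify that the first bad position of $c = u(a)$ is again $i$ and that $c_i$ is even: positions $1, \ldots, i-1$ of $c$ coincide with those of $a$ and so stay good, while at position $i$ one has $c_{i-1} = a_{i-1} > a_i$ — the descent witnessing that $i$ is bad for $a$, necessarily strict since $a_i$ is odd — whence $c_{i-1} \geq a_i + 1 = c_i$, again a bad step (a strict descent, or an equality of two even entries). Now suppose $\ell > i$. Then $b$ agrees with $c$ on coordinates $1, \ldots, \ell - 1$, in particular on $1, \ldots, i$, so the goodness of positions $2, \ldots, i-1$ and the badness of position $i$ are inherited verbatim from $c$. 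Hence the first bad position of $b$ is also $i$ with $b_i = c_i$ even, which by the definition of the matching means $b$ is matched \emph{downward} — contradicting the hypothesis $b \prec u(b)$. Therefore $\ell < i$, and by the previous paragraph $b \lex a$. The main obstacle is exactly the bookkeeping here: confirming that incrementing the odd entry $a_i$ preserves $i$ as the first bad position of $c$ with an even value, and that any lower cover of $c$ obtained by touching a coordinate $\ell > i$ cannot escape this bad even position; once that is settled, the lexicographic conclusion is immediate.
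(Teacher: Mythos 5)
Your proposal is correct and is essentially the paper's own argument in contrapositive form: the paper supposes $a \lex b$, deduces that the decremented coordinate $j$ of $b$ satisfies $j > i$, and then observes that the prefix $a_1 \leq \cdots \leq a_{i-1} \geq a_i + 1$ with $a_i + 1$ even forces $b$ to be matched downward, which is exactly your contradiction in the case $\ell > i$. Your additional bookkeeping (that $i$ remains the first bad position of $u(a)$ and hence of $b$) is the same observation as the paper's, just spelled out more explicitly.
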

\begin{proof}
Suppose on the contrary that
$a \lex b$
with 
$a = a_1 \cdots a_n$.
Assume that 
$u(a) = a_1 a_2 \cdots (a_i+1) \cdots a_n$.
Then~$a_i$ is odd and the strict inequality $a_{i-1} > a_i$ holds.
Since $a$ is lexicographically smaller than
$b$
and
the element~$b$ is obtained by decreasing an entry in $u(a)$ by one, 
the element $b$ must be of the form 
$b = a_1 \cdots (a_i + 1) \cdots (a_j-1) \cdots a_n$ 
for some index $j > i$. 
The first $i$ entries in $b$ 
satisfy  
$a_1 \leq a_2 \leq \cdots \leq a_{i-1} \geq (a_i + 1)$
and $a_i + 1$ is even,
so by 
definition the element $b$ is matched to an element of 
lower rank,
contradicting the fact that 
$(b, u(b))$ is a matched pair in $M$. 
\end{proof}

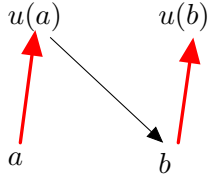
\begin{figure}[t]
\centering
\begin{tikzpicture}[line cap=round,line join=round,>=triangle 45,x=1.0cm,y=1.0cm]
\clip(-3.5,2.5) rectangle (0,5.5);
\draw (-3,5) node[anchor=north west] {$u(a)$};
\draw (-1,5) node[anchor=north west] {$u(b)$};
\draw (-3,3) node[anchor=north west] {$a$};
\draw (-1,3) node[anchor=north west] {$b$};
\draw [->,line width=1.25pt,color=red] (-2.7,3) -- (-2.5,4.5);
\draw [->] (-2.3,4.4) -- (-0.8,3);
\draw [->,line width=1.25pt,color=red] (-0.6,3) -- (-0.4,4.4);
\end{tikzpicture}
\caption{First three steps of a gradient path.} \label{gradient_path_2}
\end{figure}

\begin{theorem}
\label{theorem_54}
The matching $M$ described for  $\wordposet(n,k)$
is an acyclic matching, that is,
it is a discrete Morse matching.
\label{theorem_second_kind_acyclic_matching}
\end{theorem}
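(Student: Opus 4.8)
The plan is to use Lemma~\ref{lemma_gradient_new} as the essential tool and to exhibit the lexicographic order as a strict monovariant along any gradient path. By the definition recalled above, an acyclic (discrete Morse) matching is one in which no gradient path closes into a cycle, so it suffices to rule out the existence of a closed $V$-path
$$
   a_1 \prec u(a_1) \succ a_2 \prec u(a_2) \succ \cdots \succ a_n \prec u(a_n) \succ a_1
$$
with $n \geq 2$ and the matched elements $a_1, \ldots, a_n$ distinct.

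First I would observe that every consecutive segment along such a path has exactly the shape to which Lemma~\ref{lemma_gradient_new} applies. Reading indices cyclically, for each $i$ the relations $a_i \prec u(a_i) \succ a_{i+1}$, together with the fact that $a_{i+1}$ is itself a matched down-element satisfying $a_{i+1} \prec u(a_{i+1})$, place us precisely in the hypotheses of the lemma. Hence the lemma yields that $a_i$ is lexicographically larger than $a_{i+1}$, that is $a_{i+1} \lex a_i$, for every $i = 1, \ldots, n$ with indices taken modulo $n$.

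Chaining these strict inequalities around the cycle then gives
$$
   a_1 \lex a_n \lex a_{n-1} \lex \cdots \lex a_2 \lex a_1,
$$
which forces the impossibility $a_1 \lex a_1$. This contradiction shows that no gradient cycle can exist, so $M$ is acyclic, as claimed.

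The substantive content has already been absorbed into Lemma~\ref{lemma_gradient_new}, whose proof handles the delicate analysis of where the matching rule acts and, in particular, exploits the fact that raising an entry to an even value forces a matching downward in rank. Given that lemma, what remains is only the elementary observation that a strictly decreasing quantity cannot return to its starting value. Accordingly, the single point meriting care in this theorem is the verification that the cyclic application of the lemma is legitimate on the closing segment $a_n \prec u(a_n) \succ a_1$; this is justified because $a_1$ is a genuine matched element with partner $u(a_1)$, so the lemma's hypothesis $a_1 \prec u(a_1)$ holds there as well.
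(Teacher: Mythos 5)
Your proposal is correct and follows essentially the same route as the paper: the authors likewise invoke Lemma~\ref{lemma_gradient_new} on each consecutive segment of a hypothetical gradient cycle to obtain the impossible chain $x_1 >_{\rm lex} x_2 >_{\rm lex} \cdots >_{\rm lex} x_k >_{\rm lex} x_1$. Your added care about the cyclic (wrap-around) application of the lemma is a valid and welcome point of rigor, but it does not change the argument.
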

\begin{proof}
By Lemma~\ref{lemma_gradient_new}
one cannot find  a gradient cycle
of the form
$$x_1 \prec u(x_1) \succ x_2 \prec u(x_2) \succ \cdots \succ x_k
\prec u(x_k) \succ x_1$$
since the elements 
$x_1, \ldots, x_k$ must
satisfy
$x_1 >_{\rm lex} x_2 >_{\rm lex} \cdots >_{\rm lex}x_k >_{\rm lex} x_1$,
which is impossible.
\end{proof}

We end this section with enumeration of the words which are
left unmatched in the discrete Morse matching.
We will see in Section~\ref{section_homological_Stembridge}
that the unmatched words will provide
a basis for the integer homology of the algebraic complex supported
by the Stirling poset of the second kind.

\begin{lemma}
\label{lemma_generating_function}
The weighted generating function of the unmatched words 
$U(n,k)$ in
$\Pi(n,k)$ is given by the $q^2$-binomial coefficient
$$
    \sum_{u \in U(n,k)} \wt(u) =
     \qchoose{n-1 - \lfloor{\frac{k}{2} }\rfloor}
             {\lfloor \frac{k-1}{2} \rfloor}{q^2}.
$$
\end{lemma}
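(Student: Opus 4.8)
The plan is to parametrize the unmatched words via the description in the preceding lemma and then recognize the resulting weight sum as a $q^2$-binomial coefficient through its standard generating function. Write $\ell = \lceil k/2 \rceil$ for the number of odd values $1, 3, \ldots, 2\ell - 1$ that can appear. By the preceding lemma an unmatched word $u \in U(n,k)$ is determined by the multiplicities $j_1, \ldots, j_\ell \geq 1$ of these odd letters, while the even letters $2, 4, \ldots, 2\lfloor k/2 \rfloor$ each occur exactly once; the length constraint reads $\sum_{i=1}^\ell j_i + \lfloor k/2 \rfloor = n$. Setting every $j_i = 1$ yields the weakly increasing skeleton $1 2 3 \cdots k$, which has weight $1$.

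The key observation I would make next is that each additional copy of the odd letter $2i-1$, inserted after its first appearance, sits at a position where the running maximum already exceeds it, so by~(\ref{equation_Stirling_second_weight}) it contributes a factor $q^{(2i-1)-1} = (q^2)^{i-1}$ to $\wt(u)$. Writing $a_i = j_i - 1 \geq 0$ for the number of extra copies of $2i-1$, this gives the clean formula
\[
   \wt(u) = (q^2)^{\sum_{i=1}^\ell (i-1)\, a_i},
   \qquad \text{where } \sum_{i=1}^\ell a_i = n - k .
\]
The same conclusion follows more mechanically from the closed form $\wt(u) = q^{\sum_i u_i - n - \binom{k}{2}}$, since the substitution $a_i = j_i - 1$ collapses all the constant terms; the insertion argument is simply the transparent way to see it, and it has the virtue of being uniform in the parity of $k$.

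Summing over all unmatched words then amounts to summing over all $(a_1, \ldots, a_\ell)$ with $a_i \geq 0$ and $\sum a_i = n-k$, so that
\[
   \sum_{u \in U(n,k)} \wt(u)
   = \sum_{\substack{a_1, \ldots, a_\ell \geq 0 \\ a_1 + \cdots + a_\ell = n-k}}
        (q^2)^{\sum_{i=1}^\ell (i-1)\, a_i}.
\]
I would identify this via the standard identity $\sum_{a_1 + \cdots + a_\ell = N,\ a_i \geq 0} x^{\sum_i (i-1) a_i} = \qchoose{N + \ell - 1}{\ell - 1}{x}$, equivalently the coefficient of $z^N$ in $\prod_{i=0}^{\ell-1}(1 - x^i z)^{-1}$, which enumerates partitions with at most $\ell - 1$ parts each of size at most $N$. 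With $N = n-k$ and $x = q^2$ this produces $\qchoose{n - k + \ell - 1}{\ell - 1}{q^2}$.

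The final step is purely arithmetic: substituting $\ell = \lceil k/2 \rceil$ and using $k - \lceil k/2 \rceil = \lfloor k/2 \rfloor$ together with $\lceil k/2 \rceil - 1 = \lfloor (k-1)/2 \rfloor$ rewrites the top index as $n - 1 - \lfloor k/2 \rfloor$ and the bottom index as $\lfloor (k-1)/2 \rfloor$, which is exactly the asserted coefficient. The only place needing genuine care is verifying that each repeated odd letter truly occupies a position with $m_{i-1} \geq w_i$, so that its weight is $(q^2)^{i-1}$ and never the trivial factor $1$ from a new-maximum step; this holds because in an unmatched word every odd value first appears inside the increasing skeleton $12\cdots k$ before any of its repeats. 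That point, plus the floor/ceiling bookkeeping, is where I would slow down.
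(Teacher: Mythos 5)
Your proposal is correct and takes essentially the same approach as the paper: both parametrize an unmatched word by the repeat multiplicities of its odd letters (your $a_i = j_i - 1$), observe that each extra copy of $2i-1$ contributes $(q^2)^{i-1}$, and recognize the resulting sum as enumerating partitions fitting in an $(n-k) \times \lfloor (k-1)/2 \rfloor$ box, i.e., a Gaussian polynomial under $q \mapsto q^2$. The paper phrases the middle step as an explicit bijection to partitions of $2m$ into even parts of size at most $2\lfloor (k-1)/2 \rfloor$ and cites the partitions-in-a-rectangle result directly, whereas you invoke the equivalent composition generating-function identity, but these are the same enumeration.
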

\begin{proof}
Let $u = u_1 \cdots u_n \in U(n,k)$
be an unmatched word.  
Recall
the weight is given by reading the word from left to right
and gaining a multiplicative  factor  $q^{u_i-1}$ 
for all values of $i$ with $u_{i-1} = u_i$.
Since $u_{i-1} = u_i$ can only appear when 
$u_i$ is odd,
the weight of an unmatched word is always $q^{2m}$ for some
non-negative integer $m$.

We claim that each $u \in U(n,k)$
of weight $q^{2m}$
corresponds to an integer partition of $2m$ with at most $n-k$
parts where each part is even and where each part is at most 
$\rho = \lfloor (k-1) \slash 2 \rfloor\cdot 2$.
The correspondence is as follows. 
For each word $u$ satisfying the condition
with the odd integer $j$ appearing $m_j$ times,
map these odd integers to $m_j - 1$ copies of $j-1$.
The resulting partition of $2m$ is of the form
$$
     2m = \underbrace{2 + \cdots +2}_{m_3 - 1} + 
          \underbrace{4 + \cdots + 4}_{m_5 - 1}
          + \cdots +    
          \underbrace{\rho + \cdots + \rho}_{m_{\sigma} -1},
$$
where $\sigma$ is the largest occurring odd integer
in the original $RG$-word $u$
and $\rho = \sigma - 1$.
For example, the word $112333455$ corresponds to the partition 
$8 = 2+2+4$.
Note that the unmatched word $1$ corresponds to 
the empty partition $\emptyset$.

An alternate way to describe these partitions is to form
a partition  of $m$
into at most $n-k$ parts with each part at most 
$\lfloor (k-1)\slash 2 \rfloor$. 
By doubling each part, we obtain the above mentioned partition. 
However, by~\cite[Proposition 1.7.3]{Stanley_EC_I} 
the sum of the weight of partitions that fit into a rectangle of 
size~$n-k$ by~$\lfloor (k-1)\slash 2 \rfloor$ 
is given by the Gaussian polynomial 
$\qchoose{\lfloor\frac{k-1}{2}\rfloor+n-k}{\lfloor\frac{k-1}{2}\rfloor}{q}$. By the substitution~$q \mapsto q^2$, the result follows.
\end{proof}

\begin{corollary}
The number of unmatched words of length $n$
that is,
$U(n) = \sum_{k=1}^n |U(n,k)|$
is given by the Fibonacci number $F_n$,
where $F_n = F_{n-1} + F_{n-2}$ for
$n \geq 2$ and
$F_0 = F_1 = 1$.
\end{corollary}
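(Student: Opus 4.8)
The plan is to reduce the statement to a classical Fibonacci identity by first extracting the cardinalities $|U(n,k)|$ from the weighted count in Lemma~\ref{lemma_generating_function}. Setting $q = 1$ in that lemma, equivalently specializing the $q^2$-binomial at $q=1$, collapses the Gaussian polynomial to an ordinary binomial coefficient, so that
\begin{equation*}
   |U(n,k)| = \binom{n-1-\lfloor k/2\rfloor}{\lfloor(k-1)/2\rfloor}.
\end{equation*}
It then remains to evaluate $U(n) = \sum_{k=1}^n |U(n,k)|$ and identify the result as $F_n$.

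The natural next step is to split the sum according to the parity of $k$, since the two floor functions behave differently on even and odd arguments. Writing $k = 2i-1$ for the odd terms gives summands $\binom{n-i}{i-1}$, and writing $k = 2i$ for the even terms gives summands $\binom{n-1-i}{i-1}$. After the substitution $j = i-1$ each family becomes a shallow diagonal of Pascal's triangle:
\begin{equation*}
   \sum_{k\ \mathrm{odd}} |U(n,k)| = \sum_{j\geq 0}\binom{n-1-j}{j},
   \qquad
   \sum_{k\ \mathrm{even}} |U(n,k)| = \sum_{j\geq 0}\binom{n-2-j}{j}.
\end{equation*}

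At this point I would invoke the classical identity $F_m = \sum_{j\geq 0}\binom{m-j}{j}$, valid under the convention $F_0 = F_1 = 1$ and provable in one line by induction on $m$ using Pascal's rule. The odd-$k$ sum is then exactly $F_{n-1}$ and the even-$k$ sum is exactly $F_{n-2}$, whence $U(n) = F_{n-1} + F_{n-2} = F_n$ by the Fibonacci recurrence. One should separately check the small cases $n=1,2$ against the boundary values $F_1 = F_0 = 1$ to anchor the recurrence.

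The only genuine obstacle is bookkeeping: ensuring the floor functions, the index shift $j = i-1$, and the ranges of summation line up so that the two parity classes produce precisely $F_{n-1}$ and $F_{n-2}$ rather than shifted or off-by-one variants. An alternative, purely combinatorial route would bypass Lemma~\ref{lemma_generating_function} altogether: using the explicit description of unmatched words as $1^{a_1}\,2\,3^{a_2}\,4\cdots$ with each odd run of length $a_i\geq 1$, one can set up a bijection between $U(n)$ and the compositions of $n$ into parts of size $1$ and $2$, equivalently establish the recursion $U(n) = U(n-1) + U(n-2)$ by examining the final letter, and these compositions are counted by $F_n$ under the stated convention. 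I would present the first argument as the main proof, since it follows immediately from the lemma already in hand.
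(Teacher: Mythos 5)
Your proposal is correct and takes essentially the same route as the paper: specialize Lemma~\ref{lemma_generating_function} at $q=-1$ (equivalently $q^2=1$) to extract $|U(n,k)| = \binom{n-1-\lfloor k/2\rfloor}{\lfloor (k-1)/2\rfloor}$, then identify the sum over $k$ with the diagonal binomial expansion of the Fibonacci numbers. The only difference is bookkeeping: the paper collapses the sum directly to $\sum_{i=0}^{\lfloor n/2\rfloor}\binom{n-i}{i}=F_n$, citing this identity as well known, whereas you split by the parity of $k$ to obtain $F_{n-1}+F_{n-2}$, which amounts to an explicit verification of the same identity.
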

\begin{proof}
Substituting $q^2 = 1$, that is,
$q=-1$
in Lemma~\ref{lemma_generating_function}
gives the number of unmatched words 
$|U(n,k)|$ in
the Stirling poset of the second $\Pi(n,k)$.
Hence,
\begin{equation*}
     U(n) = \sum_{k=1}^n |U(n,k)| 
     = \sum_{i=0}^{\lfloor \frac{n}{2} \rfloor} \binom{n-i}{i} = F_{n},
\end{equation*}
where the last equality is a well-known 
binomial coefficient expansion
for the Fibonacci number $F_n$
arising from compositions of $n$ using $1$s and $2$s.
\end{proof}

\section{Decomposition of the Stirling poset of the second kind}
\label{section_topology_of_poset}

We next decompose 
the Stirling poset $\wordposet(n,k)$ into 
Boolean algebras indexed by the allowable words.
This gives a poset 
explanation for the factorization of the
$q$-Stirling number $\qStirling{n}{k}$ in terms
of powers of $q$ and $1+q$.

To state this decomposition,
we need two definitions.
For $w \in \alwordset(n, k)$ an allowable word
let
$\Inv(w) = \{i : w_j > w_i \text{ for some } j < i\}$
be the set  of all indices in $w$ that 
contribute to the right-hand element of an inversion pair. 
For $i \in \Inv(w)$ such an entry $w_i$ must be odd
since in a given allowable word
any entry occurring to the left of an even entry must be strictly
less than it.
Finally, for $w \in \alwordset(n,k)$ let
$\alpha(w)$
be the word formed by incrementing
each of the entries indexed by the
set $\Inv(w)$ by one.
Additionally,
for $w \in \Allow(n,k)$
and any $I \subseteq \Inv(w)$,
the word formed by incrementing each of the entries indexed
by the set $I$ by one
are elements of ${\cal R}(n,k)$
since if $i \in\Inv(w)$ then there is
an index $h < i$ with $w_h = w_i$.
This follows from 
Proposition~\ref{proposition_RG_word_properties} part ($ii$).

\begin{theorem}
\label{theorem_Boolean_algebra_decomposition}
The Stirling poset of the second kind
$\Pi(n,k)$ can be decomposed as the disjoint
union of Boolean intervals 
$$
    \Pi(n,k) = \underset{w \in \Allow(n,k)}\disjointunion 
                    [w, \alpha(w)].
$$
Furthermore, if an allowable word
$w \in \Allow(n,k)$ has weight
$\wt'(w) = q^i \cdot (1+q)^j$,
then 
the rank of the element $w$ is $i$ and the interval
$[w, \alpha(w)]$ is isomorphic to the Boolean algebra on
$j$ elements.
\end{theorem}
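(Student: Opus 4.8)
The plan is to establish the two assertions separately — that each interval $[w,\alpha(w)]$ is a Boolean algebra of the advertised size, and that these intervals partition $\wordposet(n,k)$ — and then read off the rank statement directly from the weight. First I would fix an allowable word $w$ with $\Inv(w)=\{i_1<\cdots<i_j\}$ and, for $I\subseteq\Inv(w)$, write $w^{(I)}$ for the word obtained from $w$ by incrementing each entry indexed by $I$; the remark preceding the theorem (via Proposition~\ref{proposition_RG_word_properties}, part (ii)) guarantees $w^{(I)}\in\rg{n}{k}$. Since the cover relation is componentwise, $v\preceq u$ forces $v_p\le u_p$ for all $p$; as $\alpha(w)=w^{(\Inv(w))}$ agrees with $w$ off $\Inv(w)$ and exceeds it by one on $\Inv(w)$, any $v$ with $w\preceq v\preceq\alpha(w)$ satisfies $v_p=w_p$ for $p\notin\Inv(w)$ and $v_p\in\{w_p,w_p+1\}$ for $p\in\Inv(w)$, whence $v=w^{(I)}$ with $I=\{p:v_p=w_p+1\}$. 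Conversely each $w^{(I)}$ lies in the interval, by incrementing the relevant positions one at a time. Because $I\mapsto w^{(I)}$ is a bijection that is order-preserving and order-reflecting (indeed $w^{(I)}\preceq w^{(I')}\iff I\subseteq I'$), it is an isomorphism from the subset lattice of $\Inv(w)$ onto $[w,\alpha(w)]$, so the interval is the Boolean algebra on $j=|\Inv(w)|$ elements.

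For the partition claim I would build an explicit retraction $r\colon\rg{n}{k}\to\Allow(n,k)$ and show both $v\in[r(v),\alpha(r(v))]$ for every $v$ and $r(w^{(I)})=w$ for every allowable $w$ and $I\subseteq\Inv(w)$. Define $r(v)$ by decrementing every even entry of $v$ that is not the first occurrence of its value, while fixing odd entries and first occurrences of even values. Using Proposition~\ref{proposition_RG_word_properties}, part (ii) — which forces, before the first occurrence of an even value $2\ell$, all entries to be at most $2\ell-1$ with $2\ell-1$ actually attained — I would verify that $r(v)$ is again an $RG$-word of maximum letter $k$ (each inequality $r(v)_i\le\max(r(v)_1,\ldots,r(v)_{i-1})+1$ survives since $r(v)_i\le v_i$ while the prefix maximum is attained at a kept first occurrence), and that $r(v)$ is allowable, as each even value now survives only at its unique first occurrence. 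Writing $I=\{p:v_p\ne r(v)_p\}$, each such $p$ has $r(v)_p$ odd while the earlier first occurrence of $v_p$ witnesses $p\in\Inv(r(v))$, so $v=r(v)^{(I)}$ with $I\subseteq\Inv(r(v))$ and hence $v\in[r(v),\alpha(r(v))]$. Conversely, if $v=w^{(J)}$ with $w$ allowable and $J\subseteq\Inv(w)$, the entries indexed by $J$ are precisely the non-initial even occurrences of $v$ (the first occurrence $\epsilon(\cdot)$ of each such even value sits at an unchanged position, lying strictly earlier and itself outside $\Inv(w)$), so $r(v)=w$. Thus every $v$ lies in exactly the interval indexed by $r(v)$, giving the disjoint decomposition.

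The rank statement is then bookkeeping against the weights: the poset is graded by $\deg\wt$, and $\deg\wt(w)=\sum_{m_{p-1}\ge w_p}(w_p-1)=A(w)$, while the exponent of $(1+q)$ in $\negwt(w)$ counts exactly the positions $p$ with $m_{p-1}>w_p$, that is $|\Inv(w)|=B(w)$. Hence $\negwt(w)=q^{A(w)}(1+q)^{B(w)}=q^i(1+q)^j$ with $i=\operatorname{rank}(w)$ and $j=|\Inv(w)|$, matching the height of $[w,\alpha(w)]$. I expect the genuine work to concentrate in the well-definedness of $r$: checking that simultaneously decrementing all non-initial even entries preserves the restricted-growth condition, rather than cascading into an invalid word, is the single place where Proposition~\ref{proposition_RG_word_properties}, part (ii) must be invoked with care, and it is the crux on which both disjointness and covering rest.
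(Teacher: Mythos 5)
Your proposal is correct and takes essentially the same route as the paper: the interval $[w,\alpha(w)]$ is identified with the subset lattice of $\Inv(w)$ via $I\mapsto w^{(I)}$, the covering of $\wordposet(n,k)$ is obtained by decrementing every non-initial even entry (your retraction $r$ is exactly the paper's ``decrease all repeated even entries except the first occurrence'' map), and the rank claim is the same weight bookkeeping. The only difference is one of rigor, not of method: you spell out disjointness, the well-definedness of $r$, and the identity $r(w^{(J)})=w$, which the paper's proof leaves implicit.
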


\begin{proof}
Let $w \in \Allow(n,k)$
with $\wt'(w) = q^i \cdot (1+q)^j$ and
$|\Inv(w)| = m$.
It directly follows from the definitions
that the interval
$[w, \alpha(w)]$
is isomorphic to the Boolean algebra
$B_m$.
With the exception of the
element $w$, all the other elements in
the interval
$[w, \alpha(w)]$
are not allowable words in $\wordposet(n,k)$
since all of the newly incremented entries will
have at least two equal even entries.
We also claim $m = j$, since $\wt'(w)$ picks
up a factor of $1+q$ for each index 
$i$ satisfying
$w_i < m_{i-1} = \max(w_1, \ldots, w_{i-1})$.
These indices are exactly the set
$\Inv(w)$.

We claim 
every element of
$\wordposet(n,k)$ occurs in some Boolean algebra
in the decomposition.
This is vacuously true if $w \in \Allow(n,k)$.
Otherwise since $w$ is not an allowable word, it has
even entries which are repeated.  Decrease all occurrences
of these repeated entries by one except for the first
occurrence of each even integer.
This is the allowable $RG$-word associated to $w$.
\end{proof}

See 
Figures~\ref{figure_pi_5_2_decomposition} and~\ref{figure_pi_5_3_decomposition}
for examples of this
decomposition for the posets in 
Figures~\ref{figure_Pi_5_2} and~\ref{figure_Pi_5_3}, respectively.


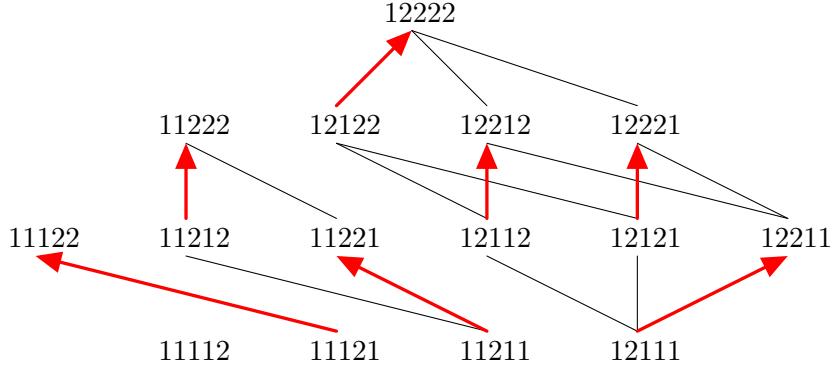
\begin{figure}[t]
\begin{center}
\begin{tikzpicture}[line cap=round,line join=round,>=triangle 45,x=1.0cm,y=1.0cm]
\clip(-4.5,1.0) rectangle (7.5,7);
\draw (1,6) node[anchor=north west] {$12222$};
\draw (-2,4.5) node[anchor=north west] {$11222$};
\draw (0,4.5) node[anchor=north west] {$12122$};
\draw (2,4.5) node[anchor=north west] {$12212$};
\draw (4,4.5) node[anchor=north west] {$12221$};
\draw (-4,3) node[anchor=north west] {$11122$};
\draw (-2,3) node[anchor=north west] {$11212$};
\draw (0,3) node[anchor=north west] {$11221$};
\draw (2,3) node[anchor=north west] {$12112$};
\draw (4,3) node[anchor=north west] {$12121$};
\draw (6,3) node[anchor=north west] {$12211$};
\draw (-2,1.5) node[anchor=north west] {$11112$};
\draw (0,1.5) node[anchor=north west] {$11121$};
\draw (2,1.5) node[anchor=north west] {$11211$};
\draw (4,1.5) node[anchor=north west] {$12111$};
\draw (1.5,5.5)-- (2.5,4.5);
\draw (0.5,4.0)-- (2.5,3.0);
\draw (1.5,5.5)-- (4.5,4.5);
\draw (-1.5,4.0)-- (0.5,3.0);
\draw (0.5,4.0)-- (4.5,3.0);
\draw (2.5,4.0)-- (6.5,3.0);
\draw (4.5,4.0)-- (6.5,3.0);
\draw (-1.5,2.5)-- (2.5,1.5);
\draw (2.5,2.5)-- (4.5,1.5);
\draw (4.5,2.5)-- (4.5,1.5);
\draw [->,line width=1.25pt,color=red] (0.5,4.5) -- (1.5,5.5);
\draw [->,line width=1.25pt,color=red] (-1.5,3.0) -- (-1.5,4.0);
\draw [->,line width=1.25pt,color=red] (2.5,3.0) -- (2.5,4.0);
\draw [->,line width=1.25pt,color=red] (4.5,3.0) -- (4.5,4.0);
\draw [->,line width=1.25pt,color=red] (0.5,1.5) -- (-3.5,2.5);
\draw [->,line width=1.25pt,color=red] (2.5,1.5) -- (0.5,2.5);
\draw [->,line width=1.25pt,color=red] (4.5,1.5) -- (6.5,2.5);
\end{tikzpicture}
\end{center}

\caption{
The decomposition of the Stirling poset $\Pi(5,2)$ into 
Boolean algebras $B_i$ for $i = 0, 1, 2, 3$.
Arrows indicate the elements matched from
the discrete Morse matching.
Based on the ranks of the 
minimal elements in each Boolean algebra,
one obtains the weight of the poset is 
$\qStirling{5}{2}= 1+(1+q)+(1+q)^2+(1+q)^3$.}
\label{figure_pi_5_2_decomposition} 
\end{figure}

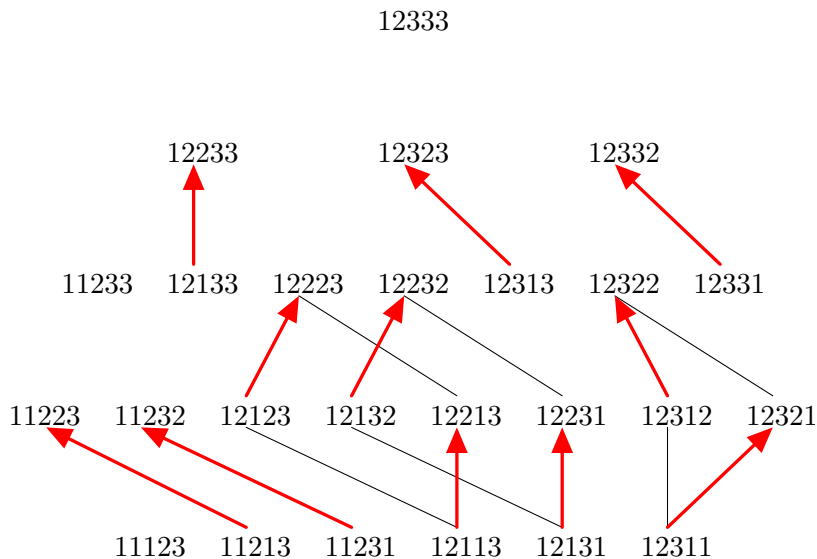
\begin{figure}[htb]
\begin{center}
\begin{tikzpicture}[line cap=round,line join=round,>=triangle 45,x=1.0cm,y=1.0cm, scale = 0.70]
\clip(-5.5,-4.0) rectangle (10.5,7.5);
\draw (2.0,7.0) node[anchor=north west] {12333};
\draw (-2.0,4.5) node[anchor=north west] {12233};
\draw (2.0,4.5) node[anchor=north west] {12323};
\draw (6.0,4.5) node[anchor=north west] {12332};
\draw (-4.0,2.0) node[anchor=north west] {11233};
\draw (0.0,2.0) node[anchor=north west] {12223};
\draw (2.0,2.0) node[anchor=north west] {12232};
\draw (4.0,2.0) node[anchor=north west] {12313};
\draw (6.0,2.0) node[anchor=north west] {12322};
\draw (8.0,2.0) node[anchor=north west] {12331};
\draw (-2.0,2.0) node[anchor=north west] {12133};
\draw (-5.0,-0.5) node[anchor=north west] {11223};
\draw (-3.0,-0.5) node[anchor=north west] {11232};
\draw (-1.0,-0.5) node[anchor=north west] {12123};
\draw (3.0,-0.5) node[anchor=north west] {12213};
\draw (5.0,-0.5) node[anchor=north west] {12231};
\draw (7.0,-0.5) node[anchor=north west] {12312};
\draw (9.0,-0.5) node[anchor=north west] {12321};
\draw (1.0,-0.5) node[anchor=north west] {12132};
\draw (-3.0,-3.0) node[anchor=north west] {11123};
\draw (-1.0,-3.0) node[anchor=north west] {11213};
\draw (1.0,-3.0) node[anchor=north west] {11231};
\draw (3.0,-3.0) node[anchor=north west] {12113};
\draw (5.0,-3.0) node[anchor=north west] {12131};
\draw (7.0,-3.0) node[anchor=north west] {12311};

\draw [->,line width=1.25pt,color=red] (4.7,2.0) -- (2.7,3.9);
\draw [->,line width=1.25pt,color=red] (8.7,2.0) -- (6.7,3.9);
\draw [->,line width=1.25pt,color=red] (-1.3,2.0) -- (-1.3,3.9);

\draw (0.7,1.4)-- (3.7,-0.5);
\draw (2.7,1.4)-- (5.7,-0.5);
\draw (6.7,1.4)-- (9.7,-0.5);
\draw [->,line width=1.25pt,color=red] (-0.3,-0.5) -- (0.7,1.4);
\draw [->,line width=1.25pt,color=red] (1.7,-0.5) -- (2.7,1.4);
\draw [->,line width=1.25pt,color=red] (7.7,-0.5) -- (6.7,1.4);

\draw (-0.3,-1.1)-- (3.7,-3.0);
\draw (1.7,-1.1)-- (5.7,-3.0);
\draw (7.7,-1.1)-- (7.7,-3);
\draw [->,line width=1.25pt,color=red] (7.7,-3.0) -- (9.7,-1.1);
\draw [->,line width=1.25pt,color=red] (-0.3,-3.0) -- (-4.1,-1.1);
\draw [->,line width=1.25pt,color=red] (1.7,-3.0) -- (-2.3,-1.1);
\draw [->,line width=1.25pt,color=red] (3.7,-3.0) -- (3.7,-1.1);
\draw [->,line width=1.25pt,color=red] (5.7,-3.0) -- (5.7,-1.1);
\end{tikzpicture}
\end{center}
\caption{
The decomposition of the Stirling poset $\Pi(5,3)$ into Boolean algebras.
Again, the matched elements are indicated with arrows.
The weight of the poset
is $\qStirling{5}{3} = 1+2(1+q)+3(1+q)^2+q^2+3q^2(1+q)+q^4$.}
\label{figure_pi_5_3_decomposition} 
\end{figure}

\section{Homological $q=-1$ phenomenon}
\label{section_homological_Stembridge}

Stembridge's $q=-1$ phenomenon~\cite{Stembridge, Stembridge_canonical}
and the more general
cyclic sieving phenomenon of 
Reiner, Stanton and White~\cite{Reiner_Stanton_White}
count symmetry classes in 
combinatorial objects
by evaluating their $q$-generating series at
a primitive root of unity. Recently Hersh, Shareshian and Stanton~\cite{Hersh_Shareshian_Stanton} 
have given a homological interpretation of 
the $q=-1$ phenomenon by viewing it as an
Euler characteristic computation on 
a chain complex supported by a poset.
In the best scenario,
the homology is concentrated in dimensions
of the same parity and one can identify a homology basis.
For further information about
algebraic discrete Morse theory,
see~\cite{Jollenbeck_Welker, Kozlov_discrete_Morse, Skoldberg}.

We will see the graded poset
$\Pi(n,k)$ supports an algebraic complex
$({\mathcal C},\partial)$.
The aforementioned matching for 
$\Pi(n,k)$ (Theorem~\ref{theorem_second_kind_acyclic_matching})
is a discrete Morse matching 
for this complex
and the unmatched elements occur in even ranks of the poset.
Hence using standard discrete Morse theory~\cite{Forman}, 
we can give a basis
for the homology.

We now review the relevant background.
We follow~\cite{Hersh_Shareshian_Stanton} here.
See also~\cite{Jollenbeck_Welker, Skoldberg}.
Let $P$ be a graded poset and $W_i$ denote the
rank $i$ elements.
We say the {\em poset $P$ supports a chain complex
$({\cal C}, \partial)$} of $\Fff$-vector spaces
$C_i$ if each $C_i$ has basis indexed by the rank $i$ elements
$W_i$
and 
$\partial_i: W_i \rightarrow W_{i-1}$
is a boundary map.
Furthermore, for
$x \in W_{i}$ and $y \in W_{i-1}$ the coefficient 
$\partial_{x,y}$ of $y$ in $\partial_i(x)$ is zero unless
$y <_P x$.

For
$w \in \Pi(n,k)$, let
\begin{align*}
     E(w) = \{i : w_i \mbox{ is even}
             \mbox{ and } w_j = w_i \mbox{ for some } j < i\}
\end{align*}
be the set
of all indices
of repeated even entries in the word $w$.
Define the boundary map $\partial$ on the elements of
$\Pi(n,k)$ by
\begin{equation}
\label{equation_boundary_map}
\partial(w) = 
              \sum_{j=1}^r (-1)^{j-1} \cdot w_1 \cdots w_{i_j - 1}
                    \cdot (w_{i_j} - 1) \cdot w_{i_j + 1} 
                    \cdots w_n,
\end{equation}
where
$E(w) = \{ i_1 < i_2 < \cdots < i_r\}$.
For example, 
if $w  = 122344$ then $E(122344) = \{3,6\}$
and
$\partial(122344) = 121344 - 122343$.
With this definition of the boundary operator
$\partial$,  we have the following lemma.

\begin{lemma}
The map $\partial$ is a boundary map on the algebraic complex
$({\mathcal C}, \partial)$ with the poset
$\Pi(n,k)$ as support.
\end{lemma}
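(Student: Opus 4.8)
The plan is to check the two defining properties of a boundary map on a poset-supported complex: that $\partial$ carries rank-$i$ basis elements to combinations of rank-$(i-1)$ elements lying strictly below them, and that $\partial^2 = 0$. First I would verify the support and grading condition. Fix $w \in \wordposet(n,k)$ with $E(w) = \{i_1 < \cdots < i_r\}$ and suppose $w_{i_j} = 2m$. Because $i_j$ is a repeated, hence non-initial, occurrence of $2m$, its first occurrence lies at some position $p_0 < i_j$; lowering $w_{i_j}$ from $2m$ to $2m-1$ therefore leaves the running maximum $\max(w_1,\ldots,w_s)$ unchanged for every prefix, so the restricted-growth condition (Proposition~\ref{proposition_RG_word_properties}) is preserved and the largest letter is still $k$. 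Writing $w^{(j)} = w_1\cdots(w_{i_j}-1)\cdots w_n$, this shows $w^{(j)} \in \rg{n}{k}$ with $w^{(j)} \prec w$ and $\wt(w^{(j)}) = q^{-1}\wt(w)$, so $w^{(j)}$ sits one rank below $w$ and the coefficient $\partial_{w,y}$ vanishes unless $y <_P w$.

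The crux is the structural identity
\[
   E(w^{(j)}) = E(w) \setminus \{i_j\},
\]
which I would prove by tracking how the single decrement affects repeated-even status. The index $i_j$ drops out of $E$ since $2m-1$ is odd. No other index changes: a position carrying an even value $\neq 2m$ retains all of its earlier equal-valued witnesses, and the freshly created odd entry $2m-1$ can never act as such a witness; meanwhile every surviving occurrence of $2m$ still sees its first occurrence at $p_0$, which is untouched precisely because first occurrences are excluded from $E$. Since turning an even entry into an odd one creates no new even repetition, the identity follows. I expect this to be the main obstacle, as it is where the exact definition of $E(\cdot)$ and the RG structure must be handled with care.

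Granting the identity, $\partial^2 = 0$ reduces to the standard simplicial sign cancellation, with $E(w)$ playing the role of the vertex set of a simplex and each decrement a face map. Let $w_{ab}$ denote the word obtained from $w$ by decrementing both positions $i_a$ and $i_b$; this is well defined, independent of order, and a valid word by the support argument above, since the two decrements commute. Expanding
\[
   \partial^2(w) = \sum_{j} (-1)^{j-1}\,\partial\bigl(w^{(j)}\bigr)
\]
and using that inside $\partial(w^{(j)})$ the index $i_l$ acquires sign $(-1)^{l-1}$ for $l<j$ and $(-1)^{l-2}$ for $l>j$ (the reindexing forced by $E(w^{(j)}) = E(w)\setminus\{i_j\}$), each unordered pair $\{a,b\}$ with $a<b$ contributes the two terms $(-1)^{a+b}w_{ab}$ and $(-1)^{a+b-1}w_{ab}$, whose sum is zero. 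Hence $\partial^2 = 0$ and $(\mathcal{C},\partial)$ is a chain complex supported on $\wordposet(n,k)$.
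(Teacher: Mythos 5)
Your proposal is correct and takes essentially the same route as the paper: both prove $\partial^2 = 0$ by expanding and cancelling, for each unordered pair of indices in $E(w)$, the two terms whose signs differ by the reindexing shift $(-1)^{l-1}$ versus $(-1)^{l-2}$. You additionally spell out the grading/support check and the key structural identity $E(w^{(j)}) = E(w)\setminus\{i_j\}$, which the paper's two-line computation uses implicitly; this makes your write-up more complete but does not constitute a different method.
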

\begin{proof}
By definition of $\partial$, we have
\begin{eqnarray*}
\partial^2(w) &=&
                 \sum_{{i_r} < {i_j} } (-1)^{j-1} \cdot
                                       (-1)^{r-1} \cdot
                     w_1 w_2 \cdots w_{i_r -1} 
                     \cdots
                     (w_{i_j} -1)  \cdots w_n
\\
             & & \:\: +  \sum_{{i_r} > {i_j} } (-1)^{j-1} \cdot
                                       (-1)^{r-2} \cdot
                     w_1 w_2 \cdots w_{i_j -1} 
                     \cdots
                     (w_{i_r} -1)  \cdots w_n,
\end{eqnarray*}
where the sum is over indices $i_r$ and $i_j$ with
$w_{i_j}, w_{i_r} \in E(w)$.
These two summations cancel since after switching 
$r$ and $j$ in the second summation,
the resulting expression becomes
the negative of the first. Hence we have that $\partial^2(w) = 0$.
\end{proof}


We have shown the graded poset
$\Pi(n,k)$ supports an algebraic complex
$({\mathcal C},\partial)$.
We will need a lemma due to 
Hersh, Shareshian and Stanton~\cite[Lemma 3.2]{Hersh_Shareshian_Stanton}.
This is part (ii) of the original statement of the lemma.

\begin{lemma}
[Hersh--Shareshian--Stanton]
\label{lemma_homology}
Let $P$ be a graded poset supporting an algebraic complex 
$(\mathcal{C}, \partial)$. 
Assume the poset $P$ has a Morse matching $M$ such that 
for all 
matched pairs $(y,x)$  with $y \prec x$ 
one has $\partial_{y,x} \in \Fff^*$.
If all unmatched poset elements occur in ranks of
the same parity, then $\dim ( H_i(\mathcal{C}, \partial)) = |P_i^{\un M}|$, 
that is,
the number of unmatched elements of rank $i$.
\end{lemma}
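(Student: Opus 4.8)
The plan is to invoke the machinery of algebraic discrete Morse theory to replace $(\mathcal{C}, \partial)$ by a chain-homotopy-equivalent \emph{Morse complex} whose underlying vector spaces are spanned only by the unmatched (critical) cells, and then to exploit the parity hypothesis to show that the differential of this Morse complex vanishes identically. Once both reductions are in place, the homology of the Morse complex is simply its chain groups, and counting basis elements rank by rank yields the claimed formula $\dim(H_i(\mathcal{C}, \partial)) = |P_i^{\un M}|$.

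For the first reduction I would proceed by Gaussian elimination along matched edges. For a single matched pair $(y, x)$ with $y \prec x$, the hypothesis $\partial_{y,x} \in \Fff^*$ says that $x$ contributes to $\partial$ with an invertible coefficient, so one may perform a change of basis that cancels the subcomplex spanned by $x$ and $y$ without altering the homology; this is the elementary collapse step of algebraic Morse theory as developed in \cite{Skoldberg, Jollenbeck_Welker}. The acyclicity of $M$ guarantees that these local cancellations can be organized consistently, since no gradient cycle obstructs the simultaneous elimination of all matched pairs, and the outcome is a complex $(\mathcal{C}^M, \partial^M)$ chain homotopy equivalent to $(\mathcal{C}, \partial)$ with $C_i^M$ free on the critical cells $P_i^{\un M}$. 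In particular $H_i(\mathcal{C}, \partial) \cong H_i(\mathcal{C}^M, \partial^M)$ and $\dim C_i^M = |P_i^{\un M}|$.

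For the second reduction, observe that the Morse differential $\partial^M_i \colon C_i^M \to C_{i-1}^M$ lowers rank by one and hence sends cells of one parity to cells of the opposite parity. By hypothesis every critical cell lies in a rank of a single fixed parity, so for every $i$ at least one of $C_i^M$ and $C_{i-1}^M$ is the zero space. Consequently $\partial^M_i = 0$ for all $i$, the Morse complex has trivial differential, and $H_i(\mathcal{C}^M, \partial^M) = C_i^M$. Combining this with the first reduction gives $\dim H_i(\mathcal{C}, \partial) = |P_i^{\un M}|$, as desired.

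The main obstacle is the first reduction: making precise that an acyclic matching with invertible edge coefficients yields a chain-homotopy-equivalent complex supported on the critical cells. Over a field this invertibility is automatic from $\partial_{y,x} \neq 0$, so the essential content is purely formal, namely the bookkeeping of the iterated Gaussian elimination and the verification that acyclicity prevents inconsistencies. Since this is exactly the content of the algebraic discrete Morse theory already cited in the paper, I would either invoke it directly or, for a self-contained argument, induct on the number of matched pairs, cancelling one pair at a time and checking that the induced differential on the remaining cells is again a boundary map compatible with the poset order.
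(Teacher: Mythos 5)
Your argument is correct, but note that the paper itself offers no proof of this statement: it is quoted verbatim (part (ii) of Lemma 3.2) from Hersh--Shareshian--Stanton \cite{Hersh_Shareshian_Stanton}, so there is no internal proof to compare against. What you have written is the standard proof from algebraic discrete Morse theory and is essentially how the cited source establishes the result: the invertibility hypothesis $\partial_{y,x} \in \Fff^{*}$ on matched edges, together with acyclicity of $M$, allows the Gaussian-elimination reduction of \cite{Skoldberg, Jollenbeck_Welker} to a Morse complex $(\mathcal{C}^{M}, \partial^{M})$ with $C_i^{M}$ spanned by $P_i^{\un M}$, and since the paper's grading makes $C_i$ the span of the rank-$i$ elements, the single-parity hypothesis forces one of $C_i^{M}$, $C_{i-1}^{M}$ to vanish for every $i$, hence $\partial^{M} = 0$ and $\dim H_i(\mathcal{C},\partial) = |P_i^{\un M}|$. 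One small point worth flagging: the lemma is stated for $\Fff$-vector spaces, whereas Theorem~\ref{theorem_homology_basis_Stirling_second} invokes it for integer homology; your Gaussian-elimination route in fact covers that case as well, since there the matched coefficients equal $1$, a unit of $\Zzz$, so the reduction and the parity argument go through unchanged over the integers and yield free homology concentrated on the critical cells.
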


We can now state our result.

\begin{theorem}
For the algebraic complex $({\mathcal C}, \partial)$ 
supported by the Stirling poset of the second kind $\Pi(n,k)$,
a basis for the integer homology is given by the
weakly increasing
allowable $RG$-words in $\Allow(n,k)$.
Furthermore, 
we have
$$
   \sum_{i \geq 0} \dim (H_i(\mathcal{C}, \partial; \Zzz)) \cdot q^i =
     \qchoose{n-1 - \lfloor \frac{k}{2} \rfloor}
             {\lfloor \frac{k-1}{2} \rfloor}{q^2}.
$$
\label{theorem_homology_basis_Stirling_second}
\end{theorem}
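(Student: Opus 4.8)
The plan is to deduce the theorem from the Hersh--Shareshian--Stanton Lemma~\ref{lemma_homology} applied to the matching $M$ built on $\Pi(n,k)$. That lemma has three hypotheses: that $M$ be a Morse matching, that each matched pair $(y,x)$ with $y\prec x$ satisfy $\partial_{y,x}\in\Fff^{*}$, and that all unmatched elements occupy ranks of a single parity. The first is exactly Theorem~\ref{theorem_second_kind_acyclic_matching}, so the work is to check the latter two.

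For the unit-coefficient condition I would write a matched pair as $(a,u(a))$, where $u$ increments the entry $a_i$ at the first position $i$ at which the weakly increasing prefix of $a$ breaks; there $a_i$ is odd and the descent $a_{i-1}>a_i$ is strict, so $a_i+1\le a_{i-1}$. Since the prefix $a_1\le\cdots\le a_{i-1}$ has maximum $a_{i-1}$, Proposition~\ref{proposition_RG_word_properties}(ii) forces every value up to $a_{i-1}$ --- in particular $a_i+1$ --- to appear already among $a_1,\ldots,a_{i-1}$. Hence in $u(a)$ the incremented, now even, entry $(u(a))_i=a_i+1$ repeats an earlier equal entry, so $i\in E(u(a))$, and the single term of $\partial(u(a))$ that decreases position $i$ returns $a$ with sign $\pm1$. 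As distinct terms of $\partial$ alter distinct positions, $a$ occurs with coefficient exactly $\pm1\in\Fff^{*}$. I expect this to be the one genuinely new computation and hence the main obstacle; everything else is assembly.

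The parity hypothesis is immediate: the unmatched words are precisely the weakly increasing allowable $RG$-words, and each such word has weight $q^{2m}$, hence even rank. Lemma~\ref{lemma_homology} then gives $\dim H_i=|U(n,k)\cap W_i|$ over a field. Because every unmatched element sits in an even rank, the induced differential on the Morse (critical) complex, which drops rank by one, is forced to vanish; together with the fact that all matching coefficients are $\pm1$, this lets me run discrete Morse theory over $\Zzz$ and conclude the integer homology is free with basis the weakly increasing allowable words in $\Allow(n,k)$, concentrated in even dimensions.

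Finally, since rank equals the $q$-degree of $\wt$, I have $\sum_{i\ge0}\dim(H_i(\mathcal{C},\partial;\Zzz))\,q^i=\sum_{u\in U(n,k)}\wt(u)$, which Lemma~\ref{lemma_generating_function} identifies with $\qchoose{n-1-\lfloor\frac{k}{2}\rfloor}{\lfloor\frac{k-1}{2}\rfloor}{q^2}$, completing the proof. As an alternative elementary route I could bypass Lemma~\ref{lemma_homology} and use the Boolean decomposition of Theorem~\ref{theorem_Boolean_algebra_decomposition}: on each interval $[w,\alpha(w)]$ one checks that the repeated-even indices of every element equal the incremented set $\Inv(w)$, so $\partial$ preserves the interval and restricts to the augmented simplicial boundary of a simplex on $\Inv(w)$. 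This subcomplex is acyclic when $|\Inv(w)|\ge1$ and contributes a single generator in rank $\deg\wt(w)$ exactly when $\Inv(w)=\emptyset$, i.e.\ when $w$ is weakly increasing, recovering the same homology basis and generating function.
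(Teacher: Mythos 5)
Your proposal is correct and follows the paper's own argument: the paper likewise deduces the theorem by checking the hypotheses of Lemma~\ref{lemma_homology} (the Morse matching of Theorem~\ref{theorem_second_kind_acyclic_matching}, unit coefficients $\partial_{y,x} = \pm 1$ on matched pairs, and unmatched words concentrated in even ranks) and then invoking Lemma~\ref{lemma_generating_function}, while your alternative elementary route via the Boolean decomposition of Theorem~\ref{theorem_Boolean_algebra_decomposition} is exactly the paper's second proof given in the remark following the theorem. The only difference is that you spell out details the paper leaves implicit, namely the $\pm 1$ coefficient verification via Proposition~\ref{proposition_RG_word_properties} and the upgrade from field to integer coefficients through the vanishing of the Morse differential on critical cells of a single parity.
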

\begin{proof}
By definition of 
the boundary map $\partial$, if 
$(x,y) \in M$
then $\partial_{y,x} = 1$
and all of the unmatched words in 
$\Pi(n,k)$ occur in even ranks. 
The conditions in Lemma~\ref{lemma_homology}
are satisfied. 
So $\sum_{i \geq 0} \dim (H_i(\mathcal{C}, \partial; \Zzz)) \cdot q^i$ is the
$q^2$-binomial coefficient 
in Lemma~\ref{lemma_generating_function}.
\end{proof}

\begin{remark}
{\rm
(A second proof of
Theorem~\ref{theorem_homology_basis_Stirling_second}.)
Theorem~\ref{theorem_homology_basis_Stirling_second}
can be proved without resorting to 
Lemma~\ref{lemma_homology} as follows.
The boundary map $\partial$
is supported on the Boolean algebras in the poset
decomposition
given in Theorem~\ref{theorem_Boolean_algebra_decomposition}.
Furthermore, the restriction
to one of these Boolean algebras is the natural boundary map 
on that Boolean algebra.  Hence the algebraic complex 
is a direct sum of 
algebraic complexes of Boolean algebras.  The only
summands that contribute any homology 
is the rank $0$ Boolean algebras, that is,
the unmatched elements.
}
\end{remark}

\section{$q$-Stirling numbers of the first kind}
\label{section_first_kind}

The (unsigned) {\em $q$-Stirling numbers of the first kind}
are defined by the recurrence formula
\begin{equation}
\label{equation_Stirling_one_recurrence}
     c_q[n,k] = c_q[n-1, k-1] + [n-1]_q \cdot c_q[n-1,k],
\end{equation}
where $c_q[n, 0] = \delta_{n,0}$.
When $q=1$, the Stirling number of the first kind
$c(n,k)$ enumerates permutations in the symmetric group
$\Sym_n$ having exactly $k$ disjoint cycles.
A combinatorial way to express $q$-Stirling
numbers of the first kind  is via rook placements;
see de~M\'edicis and Leroux~\cite{de_Medicis_Leroux_unified}.
Throughout a staircase chessboard of length~$m$
is a board 
with $m-i$ squares in the $i$th row
for $i = 1, \ldots, m-1$
and each row of squares is left-justified.

\begin{definition}
Let $\rook{m}{n}$ be the set of all ways
to place $n$ rooks onto a staircase chessboard
of length~$m$ so that no two rooks are in the same column.
For any rook placement $T \in \rook{m}{n}$, denote by $\below(T)$ the number of squares to the south of the rooks in $T$.
\end{definition}

\begin{theorem}
[de~M\'edicis--Leroux]
The $q$-Stirling number of the first kind 
$c_q[n,k]$ is given by
$$
     c_q[n,k] = \sum_{T \in \rook{n}{n-k}} q^{\below(T)},
$$
where the sum is over all rook placements of $n-k$ rooks
on a staircase board of length $n$.
\end{theorem}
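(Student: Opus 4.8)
The statement to prove is the de Médicis--Leroux formula
$$
c_q[n,k] = \sum_{T \in \rook{n}{n-k}} q^{\below(T)}.
$$
Since $c_q[n,k]$ is \emph{defined} by the recurrence~(\ref{equation_Stirling_one_recurrence}) with boundary condition $c_q[n,0] = \delta_{n,0}$, the natural strategy is to show that the right-hand side satisfies the same recurrence and the same boundary conditions; uniqueness of the solution then forces equality. So I would set
$$
R(n,k) = \sum_{T \in \rook{n}{n-k}} q^{\below(T)},
$$
and verify $R(n,0) = \delta_{n,0}$ together with $R(n,k) = R(n-1,k-1) + [n-1]_q \cdot R(n-1,k)$.

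\textbf{Boundary conditions.} First I would check the base cases. For $k=0$ we must place $n$ rooks on a staircase board of length $n$ with no two in the same column, but that board has only $n-1$ nonempty columns (rows of lengths $n-1, n-2, \ldots, 1$), so there are $n$ rooks to place in fewer than $n$ available columns when $n \geq 1$; hence no valid placement exists and $R(n,0)=0=\delta_{n,0}$ for $n\geq 1$, while $R(0,0)=1$ counts the empty placement. I would state this carefully, being explicit about the geometry of the board so the reader sees why placing $n-k$ rooks forces $n-k \le n-1$, i.e.\ $k \ge 1$ for nonemptiness.

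\textbf{The recurrence.} The heart of the argument is a bijective/sign-free decomposition of $\rook{n}{n-k}$ according to how the \emph{first column} (the longest column, of height $n-1$) is used. Either this column contains no rook, or it contains exactly one rook in some row. If the first column is empty, deleting it leaves a staircase board of length $n-1$ carrying the same $n-k$ rooks, and the statistic $\below$ is unchanged; this contributes $R(n-1,(n-1)-(k-1)) = R(n-1,k-1)$, matching the first term. If the first column holds a rook, that rook sits in one of the $n-1$ rows, contributing a $\below$-value that ranges over $0,1,\ldots,n-2$; after removing that rook and the first column we are left with $n-k-1$ rooks on a length-$(n-1)$ board, and summing $q^{\below}$ over the position of the removed rook factors out a geometric sum $1+q+\cdots+q^{n-2} = [n-1]_q$. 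This yields $[n-1]_q \cdot R(n-1,k)$, matching the second term. I would make precise that deleting the first column preserves the remaining below-counts (each surviving rook still has exactly the same squares to its south within its own column) so that the statistic genuinely factors.

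\textbf{Main obstacle.} The only delicate point is the bookkeeping of $\below(T)$ under column deletion: one must confirm that removing the leftmost column neither changes the number of squares south of any surviving rook nor disturbs the column-distinctness condition, and that the single removed rook's contribution is exactly its row index running over the full range $0,\ldots,n-2$. Once this factorization of the statistic is verified, the recurrence is immediate and the proof closes by induction on $n$.
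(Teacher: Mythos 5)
Your proof is correct. Note that the paper itself states this result without proof, attributing it to de~M\'edicis and Leroux, so there is no in-paper argument to diverge from; but your strategy --- verifying that $R(n,k)=\sum_{T\in\rook{n}{n-k}}q^{\below(T)}$ satisfies the defining recurrence~(\ref{equation_Stirling_one_recurrence}) by splitting placements according to whether the leftmost (longest) column is empty, with the removed rook's $\below$-values $0,1,\ldots,n-2$ factoring out as $[n-1]_q$ --- is the standard one, and it is exactly the decomposition the paper uses to prove its refinement Theorem~\ref{theorem_q_Stirling_first} for allowable shaded placements, so your argument specializes/parallels the paper's technique faithfully. One trivial notational slip: in the empty-column case the surviving placements lie in $\rook{n-1}{n-k}$, which in your indexing is $R(n-1,k-1)$ directly (your intermediate expression $R(n-1,(n-1)-(k-1))$ momentarily treats the second argument as a rook count rather than a block count), but your conclusion is the right one.
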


We now define a subset $\alrook{n}{n-k}$ 
of rook placements in $\rook{n}{n-k}$ 
so that the $q$-Stirling number of the first kind $c_q[n,k]$ 
can be expressed as a statistic on the subset involving
$q$ and $1+q$.
The key is given any staircase chessboard, 
assign it a certain alternating shaded pattern.

\begin{definition}
Given any staircase chessboard, assign it a chequered pattern
such that every other antidiagonal strip of
squares is shaded, beginning with the lowest antidiagonal.
Let
$$
     \alrook{m}{n} = \{T \in \rook{m}{n} :
                      \mbox{all rooks are placed in shaded squares} \}
$$
For any rook placement $T \in \alrook{m}{n}$,
let $\nrow(T)$ denote the number of rooks in $T$
that are not in the first row.
Define
the weight to be
$\wt(T) = q^{\below(T)} \cdot (1+q)^{\nrow(T)}$.
\end{definition}

\begin{theorem}
\label{theorem_q_Stirling_first}
The $q$-Stirling number of the first kind is given by
$$
     \displaystyle c_q[n,k] = \sum_{T\in\alrook{n}{n-k}} \wt(T)
                      = \sum_{T\in\alrook{n}{n-k}} q^{\below(T)} 
                             \cdot (1+q)^{\nrow(T)},
$$
where the sum is over all rook placements of $n-k$ rooks
on an alternating shaded
staircase board of length $n$.
\end{theorem}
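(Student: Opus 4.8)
The plan is to show that the weight $\wt(T)=q^{\below(T)}(1+q)^{\nrow(T)}$ factors as a product of independent per-column contributions, and then to reduce the theorem to a single per-column identity. Since no two rooks share a column, the squares lying south of the rooks are partitioned among the columns, so $\below(T)=\sum_j \below_j(T)$, where $\below_j$ counts the squares below the rook (if any) in column $j$; likewise $\nrow(T)=\sum_j \nrow_j(T)$ records whether the rook in column $j$ avoids the first row. Writing $h_j=n-j$ for the height of column $j$ and letting $g(h)$ denote the weighted sum over the shaded squares of a single height-$h$ column, it follows that $\sum_{T\in\alrook{n}{n-k}}\wt(T)=e_{n-k}\bigl(g(h_1),\ldots,g(h_{n-1})\bigr)$, the elementary symmetric polynomial in the per-column sums. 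Read the same way, the de~M\'edicis--Leroux theorem gives $c_q[n,k]=e_{n-k}([1]_q,\ldots,[n-1]_q)$, since an occupied column of height $h$ contributes $\sum_{i=1}^{h}q^{h-i}=[h]_q$ and an empty column contributes $1$. Hence it suffices to prove that $g(h)=[h]_q$ for every $h$.

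To compute $g(h)$ I first pin down the shading in column coordinates. Labelling the rows $1,\ldots,h$ from the top, a square in row $i$ of column $j$ has $h-i$ squares to its south and lies on the antidiagonal $i+j=i+(n-h)$. Because the shading begins at the bottom antidiagonal, the shaded squares of a column are exactly those with an \emph{even} number of squares to the south, that is $h-i\in\{0,2,4,\ldots\}$, and the topmost shaded square lies in the first row precisely when $h$ is odd. A rook in a shaded square therefore always contributes an even power $q^{h-i}$, and it acquires the factor $(1+q)$ unless it sits in the first row. Summing the per-rook weight over the shaded squares gives, when $h$ is even,
$$g(h)=(1+q)\bigl(1+q^2+\cdots+q^{h-2}\bigr)=[h]_q,$$
and, when $h$ is odd,
$$g(h)=q^{h-1}+(1+q)\bigl(1+q^2+\cdots+q^{h-3}\bigr)=[h]_q,$$
where in both cases the evaluation rests on the telescoping identity $(1+q)(1+q^2+\cdots+q^{2m-2})=1+q+\cdots+q^{2m-1}$, with the isolated term $q^{h-1}$ in the odd case supplying the missing top power. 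This establishes $g(h)=[h]_q$ and hence the theorem.

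The main obstacle is the shading bookkeeping in the second paragraph: one must verify, in staircase coordinates, that alternating the antidiagonals from the bottom shades precisely the squares with an even number of cells below, and that the first-row square of a column is shaded exactly for odd height. Once this parity statement is secured, the remainder is the routine $(1+q)$-telescoping above, and matching the per-column contribution $g(h)=[h]_q$ against the (unshaded) de~M\'edicis--Leroux model column by column finishes the argument. Alternatively, one can mirror the proof of Theorem~\ref{theorem_q_Stirling_second_allowable} and check directly that the right-hand side satisfies the recurrence~(\ref{equation_Stirling_one_recurrence}) by conditioning on whether the tallest column, of height $n-1$, carries a rook; the identity $g(n-1)=[n-1]_q$ is exactly what makes that recurrence close.
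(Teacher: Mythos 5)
Your proposal is correct, but it takes a genuinely different route from the paper's. The paper proves the theorem by induction on $n$ through the recurrence~(\ref{equation_Stirling_one_recurrence}): it conditions on whether the leftmost column (of height $n-1$) is empty, and computes the weighted sum over that column's shaded squares to be $[n-1]_q$, splitting into the cases $n$ even, giving $1\cdot(1+q)+q^2\cdot(1+q)+\cdots+q^{n-4}\cdot(1+q)+q^{n-2}$, and $n$ odd, giving $1\cdot(1+q)+q^2\cdot(1+q)+\cdots+q^{n-3}\cdot(1+q)$ --- which is exactly your per-column identity $g(h)=[h]_q$ at $h=n-1$. You instead globalize the column decomposition: since rooks occupy distinct columns and both $\below(\cdot)$ and $\nrow(\cdot)$ are sums of per-rook contributions, both sides of the theorem collapse to elementary symmetric polynomials, $\sum_{T\in\alrook{n}{n-k}}\wt(T)=e_{n-k}\bigl(g(1),\ldots,g(n-1)\bigr)$ and, reading the de~M\'edicis--Leroux sum the same way, $c_q[n,k]=e_{n-k}\bigl([1]_q,\ldots,[n-1]_q\bigr)$, so the entire theorem reduces to the single-column identity $g(h)=[h]_q$, with no induction at all. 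What your version buys is the structural fact the paper only records in its concluding remarks --- that $c_q[n,k]$ is the specialization $e_{n-k}(x_1,\ldots,x_{n-1})$ at $x_m=[m]_q$, now refined column by column in the shaded model; what the paper's version buys is an argument exactly parallel to its proof of Theorem~\ref{theorem_q_Stirling_second_allowable}, directly exhibiting the defining recurrence that later sections reuse. Your parity bookkeeping (a square is shaded iff an even number of squares lies below it in its column, hence the first-row square is shaded iff the column height is odd) is the same fact the paper uses implicitly when it deletes the leftmost column and treats what remains as a shaded board of length $n-1$; verifying it explicitly, as you do, is what makes $g(h)$ well defined independently of the column's position, and your check is sound. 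Finally, the alternative you sketch in closing --- verifying the recurrence by conditioning on whether the tallest column carries a rook --- is precisely the paper's proof.
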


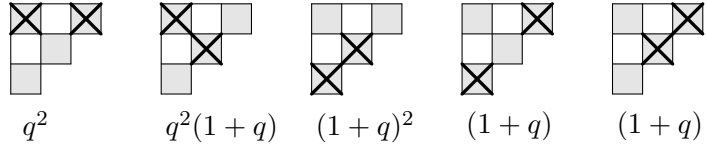
\begin{figure}
\centering
\begin{tikzpicture}
[line cap=round,line join=round,>=triangle 45,x=1.0cm,y=1.0cm, scale = 0.4]
\clip(-2,-1.5) rectangle (25,4.1);


\fill[color=gray,fill=gray,fill opacity=.2] (0,4) -- (0,3) -- (1,3) -- (1,4) -- cycle;
\fill[color=gray,fill=gray,fill opacity=.2] (2,4) -- (2,3) -- (3,3) -- (3,4) -- cycle;
\fill[color=gray,fill=gray,fill opacity=.2] (1,3) -- (1,2) -- (2,2) -- (2,3) -- cycle;
\fill[color=gray,fill=gray,fill opacity=.2] (0,2) -- (0,1) -- (1,1) -- (1,2) -- cycle;

\draw (0,4)-- (3,4);
\draw (0,3)-- (3,3);
\draw (0,2)-- (2,2);
\draw (0,1)-- (1,1);
\draw (0,4)-- (0,1);
\draw (1,4)-- (1,1);
\draw (2,4)-- (2,2);
\draw (3,4)-- (3,3);

\draw [line width=1.25pt] (0,4)-- (1,3);
\draw [line width=1.25pt] (1,4)-- (0,3);
\draw [line width=1.25pt] (2,4)-- (3,3);
\draw [line width=1.25pt] (3,4)-- (2,3);

\fill[color=gray,fill=gray,fill opacity=0.2] (5,4) -- (5,3) -- (6,3) -- (6,4) -- cycle;
\fill[color=gray,fill=gray,fill opacity=0.2] (7,4) -- (7,3) -- (8,3) -- (8,4) -- cycle;
\fill[color=gray,fill=gray,fill opacity=0.2] (5,2) -- (5,1) -- (6,1) -- (6,2) -- cycle;
\fill[color=gray,fill=gray,fill opacity=0.2] (6,3) -- (6,2) -- (7,2) -- (7,3) -- cycle;

\draw (5,4)-- (8,4);
\draw (5,3)-- (8,3);
\draw (5,2)-- (7,2);
\draw (5,1)-- (6,1);
\draw (5,4)-- (5,1);
\draw (6,4)-- (6,1);
\draw (7,4)-- (7,2);
\draw (8,4)-- (8,3);

\draw [line width=1.25pt] (5,4)-- (6,3);
\draw [line width=1.25pt] (6,4)-- (5,3);
\draw [line width=1.25pt] (6,3)-- (7,2);
\draw [line width=1.25pt] (7,3)-- (6,2);


\fill[color=gray,fill=gray,fill opacity=0.2] (10,4) -- (10,3) -- (11,3) -- (11,4) -- cycle;
\fill[color=gray,fill=gray,fill opacity=0.2] (12,4) -- (12,3) -- (13,3) -- (13,4) -- cycle;
\fill[color=gray,fill=gray,fill opacity=0.2] (10,2) -- (10,1) -- (11,1) -- (11,2) -- cycle;
\fill[color=gray,fill=gray,fill opacity=0.2] (11,3) -- (11,2) -- (12,2) -- (12,3) -- cycle;

\draw (10,4)-- (13,4);
\draw (10,3)-- (13,3);
\draw (10,2)-- (12,2);
\draw (10,1)-- (11,1);
\draw (10,4)-- (10,1);
\draw (11,4)-- (11,1);
\draw (12,4)-- (12,2);
\draw (13,4)-- (13,3);

\draw [line width=1.25pt] (10,2)-- (11,1);
\draw [line width=1.25pt] (11,2)-- (10,1);
\draw [line width=1.25pt] (11,3)-- (12,2);
\draw [line width=1.25pt] (12,3)-- (11,2);


\fill[color=gray,fill=gray,fill opacity=0.2] (15,4) -- (15,3) -- (16,3) -- (16,4) -- cycle;
\fill[color=gray,fill=gray,fill opacity=0.2] (17,4) -- (17,3) -- (18,3) -- (18,4) -- cycle;
\fill[color=gray,fill=gray,fill opacity=0.2] (15,2) -- (15,1) -- (16,1) -- (16,2) -- cycle;
\fill[color=gray,fill=gray,fill opacity=0.2] (16,3) -- (16,2) -- (17,2) -- (17,3) -- cycle;

\draw (15,4)-- (18,4);
\draw (15,3)-- (18,3);
\draw (15,2)-- (17,2);
\draw (15,1)-- (16,1);
\draw (15,4)-- (15,1);
\draw (16,4)-- (16,1);
\draw (17,4)-- (17,2);
\draw (18,4)-- (18,3);

\draw [line width=1.25pt] (15,2)-- (16,1);
\draw [line width=1.25pt] (16,2)-- (15,1);
\draw [line width=1.25pt] (17,4)-- (18,3);
\draw [line width=1.25pt] (18,4)-- (17,3);


\fill[color=gray,fill=gray,fill opacity=0.2] (20,4) -- (20,3) -- (21,3) -- (21,4) -- cycle;
\fill[color=gray,fill=gray,fill opacity=0.2] (22,4) -- (22,3) -- (23,3) -- (23,4) -- cycle;
\fill[color=gray,fill=gray,fill opacity=0.2] (20,2) -- (20,1) -- (21,1) -- (21,2) -- cycle;
\fill[color=gray,fill=gray,fill opacity=0.2] (21,3) -- (21,2) -- (22,2) -- (22,3) -- cycle;

\draw (20,4)-- (23,4);
\draw (20,3)-- (23,3);
\draw (20,2)-- (22,2);
\draw (20,1)-- (21,1);
\draw (20,4)-- (20,1);
\draw (21,4)-- (21,1);
\draw (22,4)-- (22,2);
\draw (23,4)-- (23,3);

\draw [line width=1.25pt] (21,3)-- (22,2);
\draw [line width=1.25pt] (22,3)-- (21,2);
\draw [line width=1.25pt] (22,4)-- (23,3);
\draw [line width=1.25pt] (23,4)-- (22,3);

\draw (0.05,0.77) node[anchor=north west] {$q^2$};
\draw (4.8,0.77) node[anchor=north west] {$q^2(1+q)$};
\draw (9.8,0.77) node[anchor=north west] {$(1+q)^2$};
\draw (14.8,0.77) node[anchor=north west] {$(1+q)$};
\draw (19.8,0.77) node[anchor=north west] {$(1+q)$};

\end{tikzpicture}
\caption{Computing the $q$-Stirling number
of the first kind $c_q[4,2]$
using $\alrook{4}{2}$.}
\label{allow_rooks_3_2}
\end{figure}

\begin{proof}
We proceed by  induction on $n$.
It is straightforward to see the result holds
for  $n=k=0$.
Suppose the result is true for alternating shaded staircase boards
of length $n-1$.
Then we have
\begin{eqnarray*}
\sum_{T\in\alrook{n}{n-k}} \wt(T) &=&
\sum_{\substack{T\in\alrook{n}{n-k}\\
     \text{leftmost column is empty}}} \wt(T)
+
\sum_{\substack{T\in\alrook{n}{n-k}\\
     \text{leftmost column is not empty}}} \wt(T) \\
		& = &
                      \sum_{T \in \alrook{n-1}{n-k}} \wt(T)
                      +
                      \sum_{T \in \alrook{n-1}{n-k-1}}
                      [n-1]_q \cdot \wt(T) \\ \vspace*{2mm}
		& = &  c_q[n-1, k-1] + [n-1]_q  \cdot c_q[n-1, k]\\
		& = &  c_q[n, k].
\end{eqnarray*}
In the second equality, the first term follows from the fact that
one can remove the leftmost column from the board, leaving a rook
placement of $n-k$ rooks on a length $n-1$ shaded board.
For the second term, we first consider where the rook occurs
in the leftmost column.
If the rook  occurs in the
$(2i + 1)$st entry from the bottom of the leftmost column,
where $0 \leq i < \lfloor (n-1)\slash 2 \rfloor$,
it contributes a weight
of $q^{2i} \cdot (1+q)$ since there are $2i$ squares below it
and the rook does not occur in the first row.
The only way a rook in the first column can
also occur in the
first row of a shaded staircase board is if 
the leftmost column has an odd number of squares, that is,
$n$ is even.
In this case
the rook would contribute a weight of $q^{n-2}$.
For $n$ even the overall weight contribution from a rook
in the first column is
$1 \cdot (1+q) + q^2 \cdot (1+q) + \cdots + q^{n-4} \cdot (1+q) + q^{n-2} = [n-1]_q$
and for $n$ odd the weight contribution is
$1 \cdot (1+q) + q^2 \cdot (1+q) + \cdots + q^{n-3} \cdot (1+q)  = [n-1]_q$.
Hence removing the first column from the staircase board 
along with
the rook that occurs in it leaves a shaded staircase board
of length $n-1$ with $n-k-1$ rooks.
The total weight 
lost 
is  $[n-1]_q$.
Finally, the last equality
is recurrence~(\ref{equation_Stirling_one_recurrence}).
\end{proof}

See Figure~\ref{allow_rooks_3_2} for the computation of
$c_q[4, 2]$ using allowable rook placements on length $4$
shaded staircase boards.

When we substitute $q = -1$ into
the $q$-Stirling number of the first kind,
the weight $\wt(T)$ of a rook placement $T$
will be $0$ if there is a rook in $T$ that is not in the first row.
Hence the Stirling number of the first kind $c_q[n,k]$
evaluated at $q = -1$ counts the number of rook placements
in $\alrook{n}{n-k}$ such that all of the rooks
occur in shaded squares of the first row.

\begin{corollary}
\label{corollary_shaded_rook_placements}
The $q$-Stirling number of the first kind
$c_q[n,k]$ evaluated at $q=-1$ gives the number of rook placements
in $\alrook{n}{n-k}$ where all of the rooks
occur in shaded squares in the first row, that is,
$$
     c_q[n,k] \big| _{q = -1} = \binom{\lfloor n/2 \rfloor}{n-k}.
$$
\end{corollary}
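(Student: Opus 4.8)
The plan is to use the observation recorded just before the statement: setting $q=-1$ makes the factor $(1+q)^{\nrow(T)}$ vanish for every placement $T$ having a rook outside the first (top) row, so by Theorem~\ref{theorem_q_Stirling_first} only those $T \in \alrook{n}{n-k}$ whose $n-k$ rooks all lie in shaded squares of the first row survive. For such $T$ we have $\nrow(T)=0$, so $\wt(T)\big|_{q=-1} = (-1)^{\below(T)}$, and the corollary reduces to two claims: that there are exactly $\binom{\lfloor n/2\rfloor}{n-k}$ such placements, and that each contributes $+1$, i.e.\ that $\below(T)$ is even.

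For the counting claim I would first pin down the shaded squares. Writing a cell as lying in row $i$ (from the top) and column $j$ (from the left), the staircase of length $n$ consists of the cells with $i+j\le n$, and its longest (``lowest'') antidiagonal is $i+j=n$. Shading every other antidiagonal starting from this one means a cell is shaded exactly when $i+j\equiv n \pmod 2$. Restricting to the first row $i=1$, the shaded columns are those $j\in\{1,\dots,n-1\}$ with $j\equiv n-1\pmod 2$, of which there are precisely $\lfloor n/2\rfloor$ regardless of the parity of $n$. Since all rooks lie in this single row they automatically occupy distinct columns, so choosing the placement amounts to choosing which $n-k$ of these $\lfloor n/2\rfloor$ shaded cells to occupy, giving $\binom{\lfloor n/2\rfloor}{n-k}$ placements.

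For the sign claim I would compute $\below$ cell by cell. A rook in the first row and column $j$ has exactly the cells $(2,j),\dots,(n-j,j)$ of its own column below it, namely $n-j-1$ cells, and these are disjoint from the cells lying below the other first-row rooks (which sit in other columns), so $\below(T)$ is the sum of these contributions with no cancellation. The shading condition $j\equiv n-1\pmod 2$ forces $n-j$ to be odd, hence $n-j-1$ is even; summing over the $n-k$ rooks keeps $\below(T)$ even, so $(-1)^{\below(T)}=1$ and each surviving placement contributes $+1$. Combining the two claims yields $c_q[n,k]\big|_{q=-1}=\binom{\lfloor n/2\rfloor}{n-k}$. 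The one point requiring care is the parity bookkeeping: the shading pattern, and hence both the column count and the parity of $n-j-1$, flips with the parity of $n$, and the main thing to verify is that these two flips are exactly synchronized so that the single formula $\binom{\lfloor n/2\rfloor}{n-k}$ holds uniformly.
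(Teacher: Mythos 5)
Your proposal is correct and follows essentially the same route as the paper, which treats the corollary as immediate from Theorem~\ref{theorem_q_Stirling_first} together with the observation that the factor $(1+q)^{\nrow(T)}$ kills every placement with a rook outside the first row. You simply make explicit two details the paper leaves implicit --- that the first row contains exactly $\lfloor n/2 \rfloor$ shaded squares regardless of the parity of $n$, and that $\below(T)$ is even for each surviving placement (since a shaded first-row cell in column $j$ has $n-j-1$ cells below it with $j \equiv n-1 \pmod 2$), so each contributes $+1$ --- and both verifications are carried out correctly and are consistent with the paper's Theorem~\ref{theorem_gen_stir1} specialized at $q=-1$.
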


Let $d(n,k) = |\alrook{n}{n-k}|$.  We call
$d(n,k)$ 
the {\em allowable Stirling number of the first kind}.
See Table~\ref{table_allowable_Stirling_first} for values.

\begin{proposition}
The allowable Stirling numbers of the first kind $d(n,k)$ satisfy
the recurrence
$$
     d(n,k) = d(n-1, k-1)
                   + \left\lceil \frac{n-1}{2} \right\rceil \cdot d(n-1, k)
$$
with boundary conditions
$d(n,0) = \delta_{n,0}$,
$d(n,n) = 1$ for $n \geq 0$
 and 
$d(n,k) = 0$ when $k > n$.
\end{proposition}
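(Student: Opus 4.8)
The plan is to establish this recurrence by the same leftmost-column decomposition used in the proof of Theorem~\ref{theorem_q_Stirling_first}, but now counting allowable rook placements rather than summing their weights. Recall that $d(n,k) = |\alrook{n}{n-k}|$ is the number of ways to place $n-k$ rooks, no two in a common column, on the shaded squares of a length-$n$ staircase board. First I would dispatch the boundary conditions directly. Since a length-$n$ board has only $n-1$ columns, one cannot place $n$ rooks in distinct columns once $n \geq 1$, so $d(n,0) = |\alrook{n}{n}| = 0$ for $n \geq 1$, while the empty board admits the empty placement, giving $d(0,0)=1$; together these read $d(n,0)=\delta_{n,0}$. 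Similarly $d(n,n) = |\alrook{n}{0}| = 1$ counts the unique empty placement, and $d(n,k) = |\alrook{n}{n-k}| = 0$ when $k > n$ because then $n-k<0$.

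For the recurrence, I would split $\alrook{n}{n-k}$ according to whether the leftmost column is occupied. Label a square by its row $r$ counted from the bottom and its column $c$ from the left; the alternating antidiagonal shading with the lowest antidiagonal shaded is precisely the rule ``shaded if and only if $r+c$ is even.'' Deleting the leftmost column carries a square $(r,c)$ with $c \geq 2$ to $(r-1,c-1)$ on a length-$(n-1)$ staircase board, and since $(r-1)+(c-1)$ has the same parity as $r+c$, the shading rule is preserved. Hence placements whose leftmost column is empty are in bijection with $\alrook{n-1}{n-k}$, contributing $d(n-1,(n-1)-(n-k)) = d(n-1,k-1)$.

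When the leftmost column is occupied, I would first choose the shaded square for its rook and then, after deleting that column, read off an arbitrary placement of the remaining $n-k-1$ rooks on the length-$(n-1)$ shaded board. Because this rook model constrains only columns (rooks may share rows), this is a genuine bijection onto (choice of shaded square in the leftmost column) $\times\,\alrook{n-1}{n-k-1}$, whose second factor contributes $d(n-1,(n-1)-(n-k-1)) = d(n-1,k)$. The only remaining point is to count the shaded squares of the leftmost column, which is exactly where the weighted factor $[n-1]_q$ of Theorem~\ref{theorem_q_Stirling_first} is replaced by a plain cardinality. The leftmost column consists of the $n-1$ squares with $c=1$ and $r=1,\ldots,n-1$, and such a square is shaded precisely when $r$ is odd; the number of odd $r$ in $\{1,\ldots,n-1\}$ is $\lceil (n-1)/2\rceil$. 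Multiplying, this case contributes $\lceil (n-1)/2\rceil\cdot d(n-1,k)$, and summing the two cases yields the claimed recurrence.

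I expect the only real care to lie in this last step: pinning down the shading rule precisely enough to see that the shaded squares of the leftmost column are exactly those in odd rows counted from the bottom, and hence number $\lceil (n-1)/2\rceil$. Everything else is the straightforward $q=1$ shadow of the column decomposition already carried out for the weighted statement, so I would keep the exposition brief and lean on the parallel with Theorem~\ref{theorem_q_Stirling_first}.
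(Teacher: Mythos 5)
Your proof is correct and takes essentially the same route as the paper's: you split $\alrook{n}{n-k}$ according to whether the leftmost column is empty, obtaining $d(n-1,k-1)$ from the empty case and $\left\lceil (n-1)/2 \right\rceil \cdot d(n-1,k)$ from the occupied case by counting the shaded squares available to the leftmost rook. Your additional verifications---the parity description of the shading, its preservation under deleting the leftmost column, and the boundary conditions---merely make explicit what the paper's proof leaves implicit.
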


\begin{proof}
For each $T\in \alrook{n}{n-k}$, 
there are two cases. 
If the leftmost column in $T$ is empty, 
then after deleting this column we obtain 
an allowable rook placement $T' \in \alrook{n-1}{n-k}$. 
Otherwise assume there is a rook in the leftmost column. 
We can first build an allowable rook placement 
$T' \in \alrook{n-1}{n-k-1}$ 
and then add a column of  $n-1$ squares 
with a rook in it to the left of $T'$ 
to form a rook placement in $\alrook{n}{n-k}$. 
Notice that the rook in the leftmost column 
can be only put into a shaded square, so 
there are $\lceil (n-1)\slash 2 \rceil$ 
possible squares to place the rook. 
Overall this case gives 
$\left\lceil (n-1)\slash 2 \right\rceil \cdot d(n-1, k)$ possibilities.
\end{proof}

\begin{table}
\begin{center}
\resizebox{6in}{!}
{
\begin{tabular}{|r|rrrrrrrrrrr|r|r|}
\hline
$n\backslash k$   &0  &1  &2  &3  &4  &5  &6  &7  &8  &9  &10 
&$r(n)$
&$n!$\\
\hline
0   &1  &&&&&&&&&&  &1 &1\\
1   &0  &1  &&&&&&&&&   &1 &1\\
2   &0  &1  &1  &&&&&&&&    &2 &2\\
3   &0  &1  &2  &1  &&&&&&&     &4 &6\\
4   &0  &2  &5  &4  &1  &&&&&&      &12 &24 \\
5   &0  &4  &12  &13 &6  &1  &&&&&       &36 &120 \\
6   &0  &12  &40  &51 &31 &9  &1  &&&&        &144 &720\\
7   &0  &36  &132  &193 &144 &58 &12 &1  &&&         &576 &5040\\
8   &0  &144  &564  &904    &769    &376    &106 &16 &1  &&  &2880 &40320 \\
9   &0  &576  &2400  &4180    &3980    &2273    &800    &170    &20 &1  &   &14400 &362880\\
10  &0  &2880  &12576  &23300    &24080    &15345   &6273  &1650   &270    &25 &1  &86400
&3628800\\
\hline
\end{tabular}
}
\caption{The allowable Stirling numbers of the first kind
$d(n,k)$, their row sum
$r(n)$ and $n!$ for $0 \leq n \leq 10$.}
\label{table_allowable_Stirling_first}
\end{center}
\end{table}

Certain allowable Stirling numbers of the first kind 
have closed forms as follows.
Here we let
$r(n) =  \sum_{k=0}^n d(n,k)$
denote the row sum of the allowable Stirling numbers
of the first kind.

\begin{proposition}
The allowable Stirling numbers of the first kind satisfy
\begin{align}
\label{equation_stir1_1}
     d(n, 1) &=
         \begin{cases}
             \left( \frac{n-1}{2} \right)!^2
             &\text{    for $n$ odd},\\
             \frac{n}{2} \cdot \left( \frac{n-1}{2} \right)!^2
             &\text{    for $n$ even,}
         \end{cases}\\
\label{equation_stir1_n}
	d(n, n-1) &= \Big\lfloor \frac{n}{2} \Big\rfloor \cdot \Big\lceil \frac{n}{2} \Big\rceil ,\\
\label{equation_stir1_sum}
	r(n) &= d(n+2,1).
\end{align}
\end{proposition}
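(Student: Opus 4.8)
The plan is to derive all three identities directly from the recurrence $d(n,k) = d(n-1,k-1) + \lceil (n-1)/2\rceil\, d(n-1,k)$ established just above, specialized along the two diagonals $k=1$ and $k=n-1$ and then summed over $k$. Each case reduces to an elementary product or telescoping sum, so there is no substantial obstacle; the only place requiring genuine care is the floor--ceiling bookkeeping, which must be split according to the parity of $n$.

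For \eqref{equation_stir1_1} I would set $k=1$. Since $d(n-1,0)=\delta_{n-1,0}=0$ for $n\geq 2$, the recurrence collapses to $d(n,1)=\lceil (n-1)/2\rceil\, d(n-1,1)$, and with the base case $d(1,1)=1$ this unfolds to the product $d(n,1)=\prod_{m=1}^{n-1}\lceil m/2\rceil$. The sequence $\lceil m/2\rceil = 1,1,2,2,3,3,\dots$ repeats each value twice, so I would group the factors in consecutive pairs: when $n$ is odd the product is the perfect square $\bigl(((n-1)/2)!\bigr)^2$, while when $n$ is even a single unpaired factor $n/2$ remains, giving $\tfrac{n}{2}\cdot\bigl(\lfloor (n-1)/2\rfloor !\bigr)^2$, which is exactly the claimed formula.

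Next, for \eqref{equation_stir1_n} I would set $k=n-1$. Using $d(n-1,n-1)=1$, the recurrence becomes the first-order relation $d(n,n-1)=d(n-1,n-2)+\lceil (n-1)/2\rceil$, whose solution telescopes, starting from $d(1,0)=0$, to $d(n,n-1)=\sum_{j=1}^{n-1}\lceil j/2\rceil$. Evaluating this sum by parity, again pairing the equal terms $1,1,2,2,\dots$, yields $t(t+1)$ when $n=2t+1$ and $t^2$ when $n=2t$; in both cases this equals $\lfloor n/2\rfloor\cdot\lceil n/2\rceil$. I note in passing that this reproduces the same closed form obtained for $a(n,n-1)$ in \eqref{equation_stir2_n}.

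Finally, for \eqref{equation_stir1_sum} I would obtain a recurrence for the row sum $r(n)=\sum_{k=0}^{n} d(n,k)$ by summing the defining recurrence over $k$. Re-indexing shows that the first term contributes $\sum_k d(n-1,k-1)=r(n-1)$ and the second contributes $\lceil (n-1)/2\rceil\, r(n-1)$, so $r(n)=\bigl(1+\lceil (n-1)/2\rceil\bigr)r(n-1)=\lceil (n+1)/2\rceil\, r(n-1)$ with $r(0)=1$. Unfolding gives $r(n)=\prod_{m=1}^{n}\lceil (m+1)/2\rceil=\prod_{m=2}^{n+1}\lceil m/2\rceil$, and comparing with the product formula from the first part, $d(n+2,1)=\prod_{m=1}^{n+1}\lceil m/2\rceil$, the two differ only by the factor $\lceil 1/2\rceil=1$, whence $r(n)=d(n+2,1)$. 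The key identities to verify carefully along the way are $1+\lceil (n-1)/2\rceil=\lceil (n+1)/2\rceil$ and the pairing of the repeated ceiling values, each checked separately for $n$ even and $n$ odd.
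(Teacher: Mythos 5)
Your proposal is correct, but it takes a genuinely different route from the paper. You derive all three identities purely algebraically from the recurrence $d(n,k)=d(n-1,k-1)+\lceil (n-1)/2\rceil\,d(n-1,k)$: unfolding along $k=1$ gives the product $d(n,1)=\prod_{m=1}^{n-1}\lceil m/2\rceil$, telescoping along $k=n-1$ gives $d(n,n-1)=\sum_{j=1}^{n-1}\lceil j/2\rceil$, and summing over $k$ gives the first-order relation $r(n)=\lceil (n+1)/2\rceil\,r(n-1)$, after which (\ref{equation_stir1_sum}) follows by comparing two products that differ only by the factor $\lceil 1/2\rceil=1$. The paper instead argues directly on the rook placements: it proves (\ref{equation_stir1_sum}) \emph{first}, via an explicit bijection that deletes the two longest anti-diagonals of a board in ${\mathcal Q}(n+2,n+1)$ (and conversely adds two anti-diagonals and fills each empty column with a bottom-row rook); it obtains (\ref{equation_stir1_n}) by counting the shaded squares of a length-$n$ staircase board column by column; and it obtains (\ref{equation_stir1_1}) by noting that with $n-1$ rooks in $n-1$ columns every column carries a rook, contributing $\lceil k/2\rceil$ choices per column of $k$ squares. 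Your approach buys uniformity and minimal machinery---only the already-established recurrence plus parity bookkeeping, all of which you carry out correctly (including making explicit the floor in $\bigl(\lfloor(n-1)/2\rfloor\bigr)!^2$ for even $n$, which the paper's formula leaves implicit)---whereas the paper's approach buys combinatorial insight: its proof of $r(n)=d(n+2,1)$ is an object-level bijection explaining \emph{why} the row sum equals a single entry two rows down, something your comparison of closed-form products does not reveal. One small point to make airtight in your row-sum step: the identity $\sum_{k=1}^{n}d(n-1,k)=r(n-1)$ uses both $d(n-1,0)=0$ for $n\geq 2$ and $d(n-1,n)=0$, so the boundary conditions should be invoked explicitly there.
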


\begin{proof}
We first prove~(\ref{equation_stir1_sum}).
Let
$T \in \alrook{n+2}{1}$ be a rook placement
on a shaded board.
Since rooks are only allowed to be placed in shaded
squares, the two rooks in the rightmost two columns must be in the
bottommost antidiagonal. Delete the
two longest anti-diagonals from $T$ to
obtain~$T'$. Since the shaded squares are preserved, $T'$ is still
allowable with the longest column length $n$. The rightmost two
rooks in $T$ are deleted to form $T'$, 
giving at most $n-1$ rooks in
$T'$. Hence $d(n+2,1) \leq r(n)$.

On the other hand, for any rook placement $T$ 
with at most $n-1$ rooks on a shaded
staircase board of length $n$, we can 
add two anti-diagonals to $T$ and place a rook in the bottom row for
each empty column in the new chessboard to obtain $T'$. 
The board $T'$
has $n+1$ rooks and $n+1$ columns, hence $r(n) \leq d(n+2,1)$. 
Hence we have the equality~(\ref{equation_stir1_sum}).

The expression $d(n, n-1)$
counts the number of rook placements of length $n$ 
using $1$ rook.
This is the same as counting the number of shaded squares in a 
length $n$
staircase chessboard.  Counting column by column,
beginning from the right, gives
 $1+1+2+2+ \cdots + \lfloor
n \slash 2 \rfloor = \lfloor n \slash 2 \rfloor \cdot \lceil n \slash
2 \rceil$.

Finally,
the expression $d(n,1)$
counts the number of rook
placements with $n-1$ columns and $n-1$ rooks. Thus each column must
have a rook. 
For each column with $k$ squares,
there are $\lceil k/2 \rceil$
shaded squares, hence $\lceil k \slash 2 \rceil$
choices for the rook. This gives $\left( (n-1) \slash 2 \right)!^2$
ways when $n$ is odd and $(n \slash 2) \cdot \left( (n-1)\slash
2 \right)!^2$ ways when $n$ is even.
\end{proof}

\section{Structure and topology of the Stirling poset of the first kind}
\label{section_Stirling_first_poset}

We define a poset structure on rook placements on a staircase shape
board.  
For rook placements
 $T$ and $T'$ in $\rook{m}{n}$, let $T\prec T'$ if $T'$
can be obtained from $T$ by either moving a rook to the left (west) or up
(north) by one square. 
We call this poset 
{\em the Stirling poset of the first kind} 
and denote it by $\prook{m}{n}$.
It is straightforward to check
that
the poset $\prook{m}{n}$ is graded of rank 
$(m-2) + (m-3) \cdots + (m-n-1) = (m-1) \cdot n - \binom{n+1}{2}$ and its rank generating function is $c_q[m, m-n]$. 
See Figure~\ref{rook_poset_2} for an example.

We wish to
study the topological properties of the Stirling poset of the
first kind.
To do so,
we define a matching $M$ on the poset
as follows.
Given any rook placement $T \in \prook{m}{n}$, 
let $r$ be the first rook (reading from left to right) 
that is not in a shaded square of the first row.
Match $T$ to $T'$ where $T'$ is obtained from $T$ by moving 
the rook $r$ one square down if $r$ is not in a shaded square, 
or one square up if $r$ is in a shaded square but not in the first row. 
It is straightforward to check that the unmatched rook placements 
are the ones where all of the rooks occur
in the shaded squares of the first row.

As an example, 
the matching for $\prook{4}{2}$ is shown in 
Figure~\ref{rook_poset_2}, where an upward arrow indicates a matching 
and other edges indicate 
the remaining cover relations.
Observe
the unmatched rook placements are the ones with all the rooks
occurring in the shaded squares in the first row. By the way a
chessboard is shaded, the unmatched rook placements only appear in
even ranks in the poset.

\begin{figure}
\centering
\begin{tikzpicture}[>=triangle 45, x=1.0cm,y=1.0cm, scale = 0.3]
\clip(-4,-15.5) rectangle (16,9.5);
\fill[color=gray,fill=gray,fill opacity=0.2] (7,9) -- (7,8) -- (8,8) -- (8,9) -- cycle;
\fill[color=gray,fill=gray,fill opacity=0.2] (5,9) -- (5,8) -- (6,8) -- (6,9) -- cycle;
\fill[color=gray,fill=gray,fill opacity=0.2] (6,8) -- (6,7) -- (7,7) -- (7,8) -- cycle;
\fill[color=gray,fill=gray,fill opacity=0.2] (5,7) -- (5,6) -- (6,6) -- (6,7) -- cycle;
\fill[color=gray,fill=gray,fill opacity=0.2] (12,2) -- (12,1) -- (13,1) -- (13,2) -- cycle;
\fill[color=gray,fill=gray,fill opacity=0.2] (10,2) -- (10,1) -- (11,1) -- (11,2) -- cycle;
\fill[color=gray,fill=gray,fill opacity=0.2] (11,1) -- (11,0) -- (12,0) -- (12,1) -- cycle;
\fill[color=gray,fill=gray,fill opacity=0.2] (10,0) -- (10,-1) -- (11,-1) -- (11,0) -- cycle;
\fill[color=gray,fill=gray,fill opacity=0.2] (7,2) -- (7,1) -- (8,1) -- (8,2) -- cycle;
\fill[color=gray,fill=gray,fill opacity=0.2] (5,2) -- (5,1) -- (6,1) -- (6,2) -- cycle;
\fill[color=gray,fill=gray,fill opacity=0.2] (6,1) -- (6,0) -- (7,0) -- (7,1) -- cycle;
\fill[color=gray,fill=gray,fill opacity=0.2] (5,0) -- (5,-1) -- (6,-1) -- (6,0) -- cycle;
\fill[color=gray,fill=gray,fill opacity=0.2] (2,2) -- (2,1) -- (3,1) -- (3,2) -- cycle;
\fill[color=gray,fill=gray,fill opacity=0.2] (0,2) -- (0,1) -- (1,1) -- (1,2) -- cycle;
\fill[color=gray,fill=gray,fill opacity=0.2] (1,1) -- (1,0) -- (2,0) -- (2,1) -- cycle;
\fill[color=gray,fill=gray,fill opacity=0.2] (0,0) -- (0,-1) -- (1,-1) -- (1,0) -- cycle;
\fill[color=gray,fill=gray,fill opacity=0.2] (-1,-5) -- (-1,-6) -- (0,-6) -- (0,-5) -- cycle;
\fill[color=gray,fill=gray,fill opacity=0.2] (-3,-5) -- (-3,-6) -- (-2,-6) -- (-2,-5) -- cycle;
\fill[color=gray,fill=gray,fill opacity=0.2] (-2,-6) -- (-2,-7) -- (-1,-7) -- (-1,-6) -- cycle;
\fill[color=gray,fill=gray,fill opacity=0.2] (-3,-7) -- (-3,-8) -- (-2,-8) -- (-2,-7) -- cycle;
\fill[color=gray,fill=gray,fill opacity=0.2] (2,-5) -- (2,-6) -- (3,-6) -- (3,-5) -- cycle;
\fill[color=gray,fill=gray,fill opacity=0.2] (2,-7) -- (2,-8) -- (3,-8) -- (3,-7) -- cycle;
\fill[color=gray,fill=gray,fill opacity=0.2] (3,-6) -- (3,-7) -- (4,-7) -- (4,-6) -- cycle;
\fill[color=gray,fill=gray,fill opacity=0.2] (4,-5) -- (4,-6) -- (5,-6) -- (5,-5) -- cycle;
\fill[color=gray,fill=gray,fill opacity=0.2] (7,-5) -- (7,-6) -- (8,-6) -- (8,-5) -- cycle;
\fill[color=gray,fill=gray,fill opacity=0.2] (7,-7) -- (7,-8) -- (8,-8) -- (8,-7) -- cycle;
\fill[color=gray,fill=gray,fill opacity=0.2] (8,-6) -- (8,-7) -- (9,-7) -- (9,-6) -- cycle;
\fill[color=gray,fill=gray,fill opacity=0.2] (9,-5) -- (9,-6) -- (10,-6) -- (10,-5) -- cycle;
\fill[color=gray,fill=gray,fill opacity=0.2] (12,-5) -- (12,-6) -- (13,-6) -- (13,-5) -- cycle;
\fill[color=gray,fill=gray,fill opacity=0.2] (14,-5) -- (14,-6) -- (15,-6) -- (15,-5) -- cycle;
\fill[color=gray,fill=gray,fill opacity=0.2] (13,-6) -- (13,-7) -- (14,-7) -- (14,-6) -- cycle;
\fill[color=gray,fill=gray,fill opacity=0.2] (12,-7) -- (12,-8) -- (13,-8) -- (13,-7) -- cycle;
\fill[color=gray,fill=gray,fill opacity=0.2] (12,-12) -- (12,-13) -- (13,-13) -- (13,-12) -- cycle;
\fill[color=gray,fill=gray,fill opacity=0.2] (11,-13) -- (11,-14) -- (12,-14) -- (12,-13) -- cycle;
\fill[color=gray,fill=gray,fill opacity=0.2] (10,-12) -- (10,-13) -- (11,-13) -- (11,-12) -- cycle;
\fill[color=gray,fill=gray,fill opacity=0.2] (10,-14) -- (10,-15) -- (11,-15) -- (11,-14) -- cycle;
\fill[color=gray,fill=gray,fill opacity=0.2] (7,-12) -- (7,-13) -- (8,-13) -- (8,-12) -- cycle;
\fill[color=gray,fill=gray,fill opacity=0.2] (6,-13) -- (6,-14) -- (7,-14) -- (7,-13) -- cycle;
\fill[color=gray,fill=gray,fill opacity=0.2] (5,-14) -- (5,-15) -- (6,-15) -- (6,-14) -- cycle;
\fill[color=gray,fill=gray,fill opacity=0.2] (5,-12) -- (5,-13) -- (6,-13) -- (6,-12) -- cycle;
\fill[color=gray,fill=gray,fill opacity=0.2] (2,-12) -- (2,-13) -- (3,-13) -- (3,-12) -- cycle;
\fill[color=gray,fill=gray,fill opacity=0.2] (1,-13) -- (1,-14) -- (2,-14) -- (2,-13) -- cycle;
\fill[color=gray,fill=gray,fill opacity=0.2] (0,-12) -- (0,-13) -- (1,-13) -- (1,-12) -- cycle;
\fill[color=gray,fill=gray,fill opacity=0.2] (0,-14) -- (0,-15) -- (1,-15) -- (1,-14) -- cycle;
\draw (5.0,9.0)-- (8.0,9.0);
\draw (5.0,8.0)-- (8.0,8.0);
\draw (5.0,7.0)-- (7.0,7.0);
\draw (5.0,6.0)-- (6.0,6.0);
\draw (5.0,9.0)-- (5.0,6.0);
\draw (6.0,9.0)-- (6.0,6.0);
\draw (7.0,9.0)-- (7.0,7.0);
\draw (8.0,9.0)-- (8.0,8.0);
\draw (5.0,2.0)-- (8.0,2.0);
\draw (5.0,1.0)-- (8.0,1.0);
\draw (5.0,0.0)-- (7.0,0.0);
\draw (5.0,-1.0)-- (6.0,-1.0);
\draw (5.0,2.0)-- (5.0,-1.0);
\draw (6.0,2.0)-- (6.0,-1.0);
\draw (7.0,2.0)-- (7.0,0.0);
\draw (8.0,2.0)-- (8.0,1.0);
\draw (10.0,2.0)-- (13.0,2.0);
\draw (10.0,1.0)-- (13.0,1.0);
\draw (10.0,0.0)-- (12.0,0.0);
\draw (10.0,-1.0)-- (11.0,-1.0);
\draw (10.0,2.0)-- (10.0,-1.0);
\draw (11.0,2.0)-- (11.0,-1.0);
\draw (12.0,2.0)-- (12.0,0.0);
\draw (13.0,2.0)-- (13.0,1.0);
\draw (0.0,2.0)-- (3.0,2.0);
\draw (0.0,1.0)-- (3.0,1.0);
\draw (0.0,0.0)-- (2.0,0.0);
\draw (0.0,-1.0)-- (1.0,-1.0);
\draw (0.0,2.0)-- (0.0,-1.0);
\draw (1.0,2.0)-- (1.0,-1.0);
\draw (2.0,2.0)-- (2.0,0.0);
\draw (3.0,2.0)-- (3.0,1.0);
\draw (12.0,-5.0)-- (15.0,-5.0);
\draw (12.0,-6.0)-- (15.0,-6.0);
\draw (12.0,-7.0)-- (14.0,-7.0);
\draw (12.0,-8.0)-- (13.0,-8.0);
\draw (12.0,-5.0)-- (12.0,-8.0);
\draw (13.0,-5.0)-- (13.0,-8.0);
\draw (14.0,-5.0)-- (14.0,-7.0);
\draw (15.0,-5.0)-- (15.0,-6.0);
\draw (7.0,-5.0)-- (10.0,-5.0);
\draw (7.0,-6.0)-- (10.0,-6.0);
\draw (7.0,-7.0)-- (9.0,-7.0);
\draw (7.0,-8.0)-- (8.0,-8.0);
\draw (7.0,-5.0)-- (7.0,-8.0);
\draw (8.0,-5.0)-- (8.0,-8.0);
\draw (9.0,-5.0)-- (9.0,-7.0);
\draw (10.0,-5.0)-- (10.0,-6.0);
\draw (2.0,-5.0)-- (5.0,-5.0);
\draw (2.0,-6.0)-- (5.0,-6.0);
\draw (2.0,-7.0)-- (4.0,-7.0);
\draw (2.0,-8.0)-- (3.0,-8.0);
\draw (2.0,-5.0)-- (2.0,-8.0);
\draw (3.0,-5.0)-- (3.0,-8.0);
\draw (4.0,-5.0)-- (4.0,-7.0);
\draw (5.0,-5.0)-- (5.0,-6.0);
\draw (-3.0,-5.0)-- (0.0,-5.0);
\draw (-3.0,-6.0)-- (0.0,-6.0);
\draw (-3.0,-7.0)-- (-1.0,-7.0);
\draw (-3.0,-8.0)-- (-2.0,-8.0);
\draw (-3.0,-5.0)-- (-3.0,-8.0);
\draw (-2.0,-5.0)-- (-2.0,-8.0);
\draw (-1.0,-5.0)-- (-1.0,-7.0);
\draw (0.0,-5.0)-- (0.0,-6.0);
\draw (0.0,-12.0)-- (3.0,-12.0);
\draw (0.0,-13.0)-- (3.0,-13.0);
\draw (0.0,-14.0)-- (2.0,-14.0);
\draw (0.0,-15.0)-- (1.0,-15.0);
\draw (0.0,-12.0)-- (0.0,-15.0);
\draw (1.0,-12.0)-- (1.0,-15.0);
\draw (2.0,-12.0)-- (2.0,-14.0);
\draw (3.0,-12.0)-- (3.0,-13.0);
\draw (5.0,-12.0)-- (8.0,-12.0);
\draw (5.0,-13.0)-- (8.0,-13.0);
\draw (5.0,-14.0)-- (7.0,-14.0);
\draw (5.0,-15.0)-- (6.0,-15.0);
\draw (5.0,-12.0)-- (5.0,-15.0);
\draw (6.0,-12.0)-- (6.0,-15.0);
\draw (7.0,-12.0)-- (7.0,-14.0);
\draw (8.0,-12.0)-- (8.0,-13.0);
\draw (10.0,-12.0)-- (13.0,-12.0);
\draw (10.0,-13.0)-- (13.0,-13.0);
\draw (10.0,-14.0)-- (12.0,-14.0);
\draw (10.0,-15.0)-- (11.0,-15.0);
\draw (10.0,-12.0)-- (10.0,-15.0);
\draw (11.0,-12.0)-- (11.0,-15.0);
\draw (12.0,-12.0)-- (12.0,-14.0);
\draw (13.0,-12.0)-- (13.0,-13.0);
\draw (5.0,9.0)-- (6.0,8.0);
\draw (6.0,9.0)-- (5.0,8.0);
\draw (6.0,9.0)-- (7.0,8.0);
\draw (7.0,9.0)-- (6.0,8.0);
\draw (0.0,2.0)-- (1.0,1.0);
\draw (1.0,2.0)-- (0.0,1.0);
\draw (2.0,2.0)-- (3.0,1.0);
\draw (3.0,2.0)-- (2.0,1.0);
\draw (6.0,2.0)-- (7.0,1.0);
\draw (7.0,2.0)-- (6.0,1.0);
\draw (5.0,1.0)-- (6.0,-0.0);
\draw (6.0,1.0)-- (5.0,0.0);
\draw (10.0,2.0)-- (11.0,1.0);
\draw (11.0,2.0)-- (10.0,1.0);
\draw (11.0,1.0)-- (12.0,0.0);
\draw (12.0,1.0)-- (11.0,-0.0);
\draw (-2.0,-5.0)-- (-1.0,-6.0);
\draw (-1.0,-5.0)-- (-2.0,-6.0);
\draw (-1.0,-5.0)-- (0.0,-6.0);
\draw (0.0,-5.0)-- (-1.0,-6.0);
\draw (2.0,-6.0)-- (3.0,-7.0);
\draw (3.0,-6.0)-- (2.0,-7.0);
\draw (4.0,-5.0)-- (5.0,-6.0);
\draw (5.0,-5.0)-- (4.0,-6.0);
\draw (8.0,-5.0)-- (9.0,-6.0);
\draw (9.0,-5.0)-- (8.0,-6.0);
\draw (7.0,-7.0)-- (8.0,-8.0);
\draw (8.0,-7.0)-- (7.0,-8.0);
\draw (12.0,-6.0)-- (13.0,-7.0);
\draw (13.0,-6.0)-- (12.0,-7.0);
\draw (13.0,-6.0)-- (14.0,-7.0);
\draw (14.0,-6.0)-- (13.0,-7.0);
\draw (1.0,-13.0)-- (2.0,-14.0);
\draw (2.0,-13.0)-- (1.0,-14.0);
\draw (2.0,-12.0)-- (3.0,-13.0);
\draw (3.0,-12.0)-- (2.0,-13.0);
\draw (5.0,-14.0)-- (6.0,-15.0);
\draw (6.0,-14.0)-- (5.0,-15.0);
\draw (10.0,-14.0)-- (11.0,-15.0);
\draw (11.0,-14.0)-- (10.0,-15.0);
\draw (7.0,-12.0)-- (8.0,-13.0);
\draw (8.0,-12.0)-- (7.0,-13.0);
\draw (11.0,-13.0)-- (12.0,-14.0);
\draw (12.0,-13.0)-- (11.0,-14.0);
\draw [->,line width=1.25pt,color=red] (8.0,-4.0) -- (6.0,-2.0);
\draw [->,line width=1.25pt,color=red] (1.0,-11.0) -- (-2.0,-9.0);
\draw [->,line width=1.25pt,color=red] (6.0,-11.0) -- (4.0,-9.0);
\draw (6.0,5.0)-- (2.0,3.0);
\draw (6.0,5.0)-- (6.0,3.0);
\draw [->,line width=1.25pt,color=red] (11.0,3.0) -- (6.0,5.0);
\draw (1.0,-2.0)-- (3.0,-4.0);
\draw (9.0,-9.0)-- (11.0,-11.0);
\draw (1.0,-2.0)-- (-2.0,-4.0);
\draw (6.0,-2.0)-- (13.0,-4.0);
\draw (6.0,-2.0)-- (3.0,-4.0);
\draw (11.0,-2.0)-- (13.0,-4.0);
\draw (4.0,-9.0)-- (1.0,-11.0);
\draw (9.0,-9.0)-- (6.0,-11.0);
\draw [->,line width=1.25pt,color=red] (11.0,-11.0) -- (13.0,-9.0);
\end{tikzpicture}
\caption{Example of $\prook{4}{2}$ with its matching.  
There is one unmatched rook placement in rank $2$.
The rank generating function of this poset is
$c_q[4,2] = 3 + 4q + 3q^2 + q^3$.}
\label{rook_poset_2}
\end{figure}
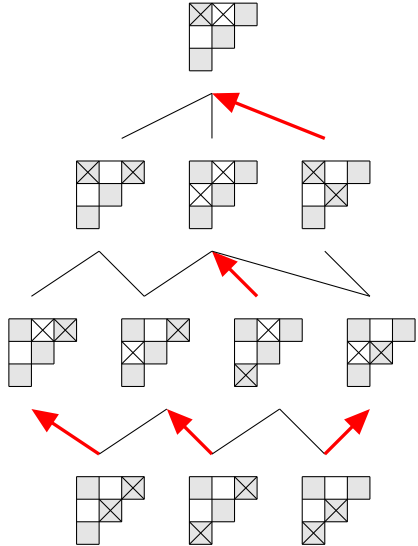

We have a $q$-analogue of Corollary~\ref{corollary_shaded_rook_placements}.

\begin{theorem}
\label{theorem_gen_stir1}
For the Stirling poset of the
first kind $\prook{m}{n}$
the generating function for the unmatched rook placements is
$$
\underset{\substack{T\in\prook{m}{n}\\ \text{$T$ unmatched}}} \sum\wt(T) = 
q^{n(n-1)} \cdot \qchoose{\lfloor\frac{m}{2}\rfloor}{n}{q^2}\,.
$$
\end{theorem}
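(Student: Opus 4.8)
The plan is to compute the sum directly, first pinning down the unmatched placements and then recognizing the resulting generating function as a $q^2$-binomial, in close analogy with Lemma~\ref{lemma_generating_function} for the second kind. As noted just before the statement, a placement $T \in \prook{m}{n}$ is unmatched exactly when all of its $n$ rooks sit in shaded squares of the first row. Such a $T$ has $\nrow(T) = 0$, so its weight collapses to $\wt(T) = q^{\below(T)}$; no $(1+q)$ factors intervene and we may work entirely with the rank weight of the poset.

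First I would describe the shaded first-row squares explicitly. In the staircase board of length $m$, the square in row $1$ and column $j$ is shaded precisely when $j \equiv m-1 \pmod 2$, and there are exactly $\lfloor m/2 \rfloor$ such columns. A rook placed in the first-row square of column $j$ has exactly $m-j-1$ cells beneath it in its own column, so it contributes $q^{m-j-1}$ to $\below(T)$. The crucial observation — the step I expect to carry the argument — is that as $j$ runs over the shaded first-row columns, the exponent $m-j-1$ runs over the even integers $0, 2, 4, \ldots, 2(\lfloor m/2 \rfloor - 1)$, independently of the parity of $m$.

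Granting this, since the rooks occupy distinct columns, $\below(T)$ is the sum of the individual column contributions, so choosing the $n$ occupied columns amounts to choosing an $n$-subset $\{d_1 < \cdots < d_n\}$ of $\{0, 1, \ldots, \lfloor m/2 \rfloor - 1\}$ with weight $q^{2(d_1 + \cdots + d_n)}$. I would then invoke the standard subset-sum evaluation (as in~\cite[Proposition 1.7.3]{Stanley_EC_I}, already used in Lemma~\ref{lemma_generating_function})
$$
    \sum_{0 \le d_1 < \cdots < d_n \le N-1} t^{d_1 + \cdots + d_n}
       = t^{\binom{n}{2}} \cdot \qchoose{N}{n}{t},
$$
with $N = \lfloor m/2 \rfloor$ and $t = q^2$. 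Since $2\binom{n}{2} = n(n-1)$, this yields precisely $q^{n(n-1)} \cdot \qchoose{\lfloor m/2 \rfloor}{n}{q^2}$, as claimed.

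The only genuine obstacle is the parity bookkeeping in the middle step: for $m$ even the shaded first-row columns are the odd-indexed ones $1, 3, \ldots, m-1$, whereas for $m$ odd they are the even-indexed ones $2, 4, \ldots, m-1$. I would treat the two cases separately just long enough to verify that in each the below-counts $m-j-1$ are exactly $0, 2, \ldots, 2(\lfloor m/2 \rfloor - 1)$; the uniformity of this multiset across both parities is what produces the single clean floor function and the $q^2$-binomial. Everything after that is the routine identity displayed above.
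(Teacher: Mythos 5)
Your proposal is correct and takes essentially the same route as the paper's proof: both identify the unmatched placements as those with all $n$ rooks in shaded first-row squares, record the below-counts as distinct bounded even integers, and shift by the staircase $0,2,\ldots,2(n-1)$ to reduce to partitions in an $n \times (\lfloor m/2\rfloor - n)$ box, giving $q^{n(n-1)} \cdot \qchoose{\lfloor\frac{m}{2}\rfloor}{n}{q^2}$ via \cite[Proposition 1.7.3]{Stanley_EC_I} with $q \mapsto q^2$. Your explicit parity verification of the shaded first-row columns is simply a more detailed rendering of the bound $\lambda_i \leq m-1-(2i-1)$ that the paper states without comment.
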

\begin{proof}
The number of unmatched rook placements 
in rank $2j$ 
in the poset
$\prook{m}{n}$ 
is the same as the number of integer partitions 
$\lambda = (\lambda_1, \ldots, \lambda_n)$ of $2j$ 
into $n$ distinct non-negative even parts, 
with each $\lambda_i \leq m-1-(2i-1)$. 
Alternatively, this is the number of partitions 
$\delta = (\delta_1, \ldots, \delta_n)$ of 
$2j-(0+2+\cdots+(2n-2)) = 2j-n(n-1)$ into $n$ non-negative even parts, 
where 
each part  $\delta_i$
satisfies~$\delta_i  = \lambda_i - (2n-(2i-2)) \leq m-2n$
for $i = 1, \ldots, n$. Thus we have
\begin{eqnarray*}
\sum_{\substack{T\in\prook{m}{n}\\ 
          \text{$T$ unmatched}}}   \wt(T)
           &=& \sum_{j\geq 0} 
               \sum_{\substack{(\lambda_1, \ldots, \lambda_n) \vdash 2j \\ 
                               0 \leq \lambda_i \leq m-1-(2i-1) \\ 
                            \lambda_i \text{ distinct even integers} }} 
                               q^{|\lambda|} \\
           &=& q^{n(n-1)} \cdot \sum_{2j-n(n-1)\geq 0} 
               \sum_{\substack{\lambda \vdash 2j-n(n-1) \\ 
                     0\leq \lambda_i \leq m-2n \\ 
                     i = 1, \ldots, n \\ \lambda_i \text{ even integers}}} 
                     q^{|\lambda|}\\
     &=& q^{n(n-1)} \cdot \sum_{j-\frac{n(n-1)}{2} \geq 0} 
                    \sum_{\substack{\lambda \vdash j-\frac{n(n-1)}{2} \\ 
                     0 \leq \lambda_i \leq \lfloor\frac{m}{2}\rfloor-n \\ 
                     i = 1, \ldots, n}} (q^2)^{|\lambda|}\,.
\end{eqnarray*}

The last (double) sum is over all integer partitions into at most $n$ parts where each part is at most $\lfloor m \slash 2 \rfloor - n$. Hence this sum is given by the Gaussian polynomial $\qchoose{\lfloor m/2 \rfloor}{n}{q^2}$, proving the desired identity.
\end{proof}

Given a rook placement $T \in \rook{m}{n}$, 
we can associate to it a {\em rook word}
$w_T = w_1 w_2 \ldots w_{m-1}$ where~$w_i$ is one plus the number of squares below the column $i$ rook.
If column $i$ is empty, let $w_i = 0$.
See Figure~\ref{figure_rook_word}
for an example.

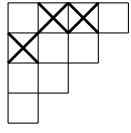
\begin{figure}
\centering
\begin{tikzpicture}[>=triangle 45, x=1.0cm,y=1.0cm, scale = 0.4]
\clip(-0.5,-0.5) rectangle (5,5);
\draw (0,4)-- (4,4);
\draw (0,3)-- (4,3);
\draw (0,2)-- (3,2);
\draw (0,1)-- (2,1);
\draw (0,4)-- (0,0);
\draw (0,0)-- (1,0);
\draw (1,0)-- (1,4);
\draw (2,4)-- (2,1);
\draw (3,2)-- (3,4);
\draw (4,4)-- (4,3);
\draw [line width=1.05pt] (1,3)-- (0,2);
\draw [line width=1.05pt] (0,3)-- (1,2);
\draw [line width=1.05pt] (1,4)-- (2,3);
\draw [line width=1.05pt] (2,4)-- (1,3);
\draw [line width=1.05pt] (2,4)-- (3,3);
\draw [line width=1.05pt] (3,4)-- (2,3);
\end{tikzpicture}
\caption{A rook placement $T$ with rook word $w_T = 3320$.}
\label{figure_rook_word}
\end{figure}

\begin{lemma}
\label{lemma_gradient_new_first}
Let $T$ and $T'$ be two distinct elements in the Stirling poset of the
first kind such that
$T \prec u(T) \succ T' \prec u(T')$
is a gradient path.
Then the rook words
satisfy the inequality  $w_T <_{\rm lex} w_{T'}$.
\end{lemma}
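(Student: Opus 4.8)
The plan is to adapt the argument of Lemma~\ref{lemma_gradient_new} to the rook setting, the essential new feature being that the Hasse diagram of $\prook{m}{n}$ has \emph{two} kinds of covers (moving a rook north and moving a rook west) rather than the single letter-incrementing cover used in the second-kind poset.

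First I would restate the matching of Section~\ref{section_Stirling_first_poset} directly on rook words. Writing $w_T = w_1 \cdots w_{m-1}$, the rook in column $i$ lies in a shaded square exactly when $w_i$ is odd and lies in the first row exactly when $w_i = m-i$; hence, reading left to right, the designated rook sits in the first column $c$ with $w_c \geq 1$ for which it is \emph{not} the case that $w_c = m-c$ with $w_c$ odd. The rule then matches $T$ upward (incrementing $w_c$) precisely when $w_c$ is odd, and downward when $w_c$ is even. Since $T \prec u(T)$ is a matching edge with $T$ below, the designated column $c$ of $T$ satisfies $w_c$ odd and $w_c < m-c$, and the rook word of $u(T)$ agrees with $w_T$ except that its $c$th entry is the even number $w_c+1$.

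Next I would analyze the cover $u(T) \succ T'$. As a downward cover it is the reverse of a single north- or west-move, so $T'$ is obtained from $u(T)$ either by lowering one rook by a square (decrementing a single entry $w_d$) or by sliding a rook from column $d$ into the empty column $d+1$ (the reverse of a west-move, which empties column $d$ and records a rook of height $w_d-1$ in column $d+1$). Assuming, toward a contradiction, that $w_{T'} \lex w_T$ (equality being excluded since distinct placements have distinct rook words), I would compare $w_{T'}$ and $w_T$ at their first disagreement and locate it relative to the column $c$. The engine of the proof, exactly as in Lemma~\ref{lemma_gradient_new}, is that whenever this first disagreement is forced to occur at or to the left of $c$, the rook responsible comes to rest either at an even height or at an unshaded top square; in every such case the designated rook of $T'$ has even height, so the matching rule would send $T'$ \emph{downward}. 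This contradicts the hypothesis that $T' \prec u(T')$ is a matching edge, in which the designated rook of $T'$ must have odd height. Ruling out all these cases leaves only $w_T \lex w_{T'}$.

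The main obstacle will be the west/east covers, which have no analogue in the second-kind poset $\Pi(n,k)$, where every cover alters a single letter by one. A single slide simultaneously empties column $d$ and re-inserts a rook of the opposite height-parity in column $d+1$, so one must check carefully, case by case on whether $d<c$, $d=c$, or $d>c$, that the designated column of $T'$ and the parity of its entry come out as claimed; the delicate point is that emptying a column can push the designated column of $T'$ to the right of $c$, where the parity bookkeeping must still be verified. Once the lemma is established, acyclicity of the matching (the analogue of Theorem~\ref{theorem_second_kind_acyclic_matching}) is immediate: a gradient cycle would produce a strictly $\lex$-monotone cyclic chain of rook words, which is impossible.
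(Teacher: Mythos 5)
Your translation of the matching into rook words is accurate, and your inventory of the downward covers from $u(T)$ (lowering one rook, or the east slide that empties column $d$ and records a rook of height $w_d-1$ in column $d+1$) is correct. But there is a genuine gap, and it sits exactly at the case you flagged as delicate: the east slide of the matched rook itself, $d = c$. There the rook of even height $w_c+1$ in column $c$ of $u(T)$ slides into column $c+1$ at height $w_c$, which is \emph{odd} and hence shaded, so your claimed dichotomy --- that any first disagreement at or to the left of $c$ leaves the responsible rook at an even height or an unshaded top square --- fails, and $T'$ is legitimately matched upward. Concretely, in $\prook{5}{1}$ take $T$ with one rook and $w_T = 1000$; then $w_{u(T)} = 2000$, $w_{T'} = 0100$, $w_{u(T')} = 0200$ is a genuine gradient path with $w_{T'} \lex w_T$. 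So the hypothesis you assume for contradiction cannot be refuted: the inequality in the lemma as printed is reversed (evidently a typo in the statement), and what is actually true --- and what the paper's proof establishes, in parallel with Lemma~\ref{lemma_gradient_new}, where $a$ is lexicographically \emph{larger} than $b$ --- is $w_T >_{\rm lex} w_{T'}$.

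Apart from this inversion your plan coincides with the paper's argument, so the repair is a reorientation rather than new machinery: show directly that every downward cover $u(T) \succ T'$ other than the east slide at column $c$ would leave the designated rook of $T'$ at an even height, forcing $T'$ to be matched downward and contradicting $T' \prec u(T')$. (A down-shift in a column $j < c$ moves a first-row shaded rook to an unshaded square; a down-shift or slide in a column $j > c$ leaves the even-height rook in column $c$ designated; a slide at $j < c$ lands its rook in the first row of column $j+1$ at even height; and the down-shift at $j = c$ returns $T$ itself, excluded since $T \neq T'$.) The unique surviving case is the slide at $c$, which zeroes out position $c$ of the word and therefore gives a strict lexicographic \emph{decrease} $w_T >_{\rm lex} w_{T'}$. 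Acyclicity (Theorem~\ref{theorem_acyclic_stir1}) follows from strict monotonicity in either direction, exactly as in your closing sentence, but as written your proof establishes neither direction, because the contradiction you seek in the surviving case does not exist.
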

\begin{proof}
Let $w_T = w_1 \cdots w_n$.
Since $u(T)$ is obtained from $T$ by
shifting a rook $a$ in column $i$ up by one square,
we have $w_{u(T)} = w_1 \cdots (w_i + 1) \cdots w_n$.
By definition of the matching,
in the rook placement~$T$
the rook $a$ was in a shaded square not in
the first row.
In the rook placement $u(T)$ the rook $a$ is now in an unshaded
square. Furthermore, all of the rooks in the leftmost $i - 1$
columns of $T$ are in shaded squares in the first row.

The rook placement $T'$ is obtained from $u(T)$ by shifting a rook to
the right or down.
We first show that $T'$ cannot be obtained by shifting 
a rook in $u(T)$ down by one square.

Suppose a rook $b$ in column $j \neq i$ of $u(T)$ is shifted down
to form $T'$.
If $j < i$ since all of the rooks in columns $1$ through $i-1$
occur in shaded squares of the first row, the rook $b$ is now
in an unshaded square in the rook placement $T'$.
Hence if it is matched with another rook placement, it will
be of one rank lower, contradicting the fact that
we assumed $T'$ was part of a gradient path
$T \prec u(T) \succ T' \prec u(T')$. 
If $j > i$
then the rook $a$ in column $i$ of $T'$ 
is in an unshaded square and hence~$T'$ should be matched 
to a rook placement in one lower rank.  Again,
this contradicts our gradient path assumption. 
Hence this case cannot occur.

The remaining case is when
a rook in $u(T)$ occurring in the $j$th
column for some index $j < n$ is shifted to the right
to form $T'$.  Note this implies the $(j+1)$st column of
$T$ had no rooks in it.
If $j < i$, then since $b$ in column $j$ in $u(T)$ 
is in a shaded square of the first row, 
it is shifted to an unshaded square in $T'$ 
and hence $T'$ is matched to a rook placement in one lower rank. 
If $j > i$ then $a$ in $T'$ is the first rook that does not 
appear in a shaded square of the first row. 
Hence $T'$ is matched to some rook placement of one rank lower, 
contradicting the gradient path assumption.

The only remaining possibility is when $j=i$.
Then the rook $a$ in $u(T)$ is shifted to a shaded square in $T'$,
and hence 
$w_{T} = w_1 \cdots w_{i-1} \cdot w_i \cdot 0 \cdot w_{i+2} \cdots w_n 
>_{\rm lex} 
 w_1 \cdots w_{i-1} \cdot 0 \cdot (w_i - 1) \cdot w_{i+2} \cdots w_n = w_{T'}$,
as desired.
\end{proof}

\begin{theorem}
\label{theorem_acyclic_stir1}
The matching $M$ on 
the Stirling poset of the first kind
$\prook{m}{n}$ is an acyclic matching, that is,
the Stirling poset has a discrete Morse matching.
\end{theorem}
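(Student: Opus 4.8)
The plan is to follow verbatim the argument used for the Stirling poset of the second kind in Theorem~\ref{theorem_54}, with the lexicographic monotonicity of Lemma~\ref{lemma_gradient_new_first} playing the role that Lemma~\ref{lemma_gradient_new} played there. All of the combinatorial work has already been absorbed into that lemma, so what remains is only a short contradiction argument: a quantity that strictly increases along every step of a gradient path cannot return to its starting value around a closed cycle.

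Concretely, I would suppose toward a contradiction that $M$ admits a gradient cycle
$$
    T_1 \prec u(T_1) \succ T_2 \prec u(T_2) \succ \cdots \succ T_k \prec u(T_k) \succ T_1,
$$
with $k \geq 2$ and the rook placements $T_1, \ldots, T_k$ pairwise distinct. Reading indices cyclically (so that $T_{k+1} = T_1$), each length-three segment $T_i \prec u(T_i) \succ T_{i+1} \prec u(T_{i+1})$ is itself a gradient path of exactly the shape required by Lemma~\ref{lemma_gradient_new_first}, so that lemma yields $w_{T_i} <_{\rm lex} w_{T_{i+1}}$. Chaining these strict inequalities around the cycle gives
$$
    w_{T_1} <_{\rm lex} w_{T_2} <_{\rm lex} \cdots <_{\rm lex} w_{T_k} <_{\rm lex} w_{T_1},
$$
which is impossible. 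Hence no gradient cycle exists and the matching $M$ on $\prook{m}{n}$ is a discrete Morse matching.

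I expect essentially no obstacle at this final step, since all the difficulty has been deferred to Lemma~\ref{lemma_gradient_new_first}. Once the strict inequalities $w_{T_i} <_{\rm lex} w_{T_{i+1}}$ are in hand, the conclusion is forced purely because lexicographic order is a strict total order and therefore admits no cyclically strictly increasing chain: transitivity would otherwise yield $w_{T_1} <_{\rm lex} w_{T_1}$. The only points worth checking are that each length-three segment of a putative cycle is genuinely a gradient path (this is exactly the definition of a gradient cycle) and that the case $k = 2$ is not special, which it is not, as the same chain collapses to $w_{T_1} <_{\rm lex} w_{T_2} <_{\rm lex} w_{T_1}$.
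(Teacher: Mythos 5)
Your proposal is correct and coincides with the paper's intended argument: the paper omits the proof of Theorem~\ref{theorem_acyclic_stir1}, stating only that it is similar to that of Theorem~\ref{theorem_second_kind_acyclic_matching}, which is exactly your argument of applying the gradient-path lemma (Lemma~\ref{lemma_gradient_new_first}) to each consecutive triple of a putative cycle and chaining the strict lexicographic inequalities $w_{T_1} <_{\rm lex} \cdots <_{\rm lex} w_{T_k} <_{\rm lex} w_{T_1}$ into a contradiction. Nothing further is needed.
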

The proof is similar to that of
Theorem~\ref{theorem_second_kind_acyclic_matching}, and
thus omitted.

Next we give a decomposition of the Stirling poset of the first kind
$\Gamma(m,n)$ into Boolean algebras indexed by the allowable rook
placements.  This will lead to a boundary map on the algebraic
complex with $\Gamma(m,n)$ as the support.  For any
$T \in \alrook{m}{n}$, let $\alpha(T)$ be the rook placement obtained
by shifting every rook that is not in the first row up by one. Then we
have the following theorem.

\begin{theorem}
\label{theorem_decomposition_Stirling_poset_first}
The Stirling poset of the first kind $\Gamma(n,k)$ 
can be decomposed as disjoint union of Boolean intervals
$$
    \Gamma(m,n) = \underset{T \in \alrook{m}{n}}\disjointunion
                    [T, \alpha(T)] .
$$
Furthermore, if
$T \in \alrook{m}{n}$ has weight
$\wt(T) = q^i \cdot (1+q)^j$,
then
the rank of the element $T$ is $i$ and the interval
$[T, \alpha(T)]$ is isomorphic to the Boolean algebra on
$j$ elements.
\end{theorem}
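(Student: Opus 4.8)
The plan is to mirror the proof of Theorem~\ref{theorem_Boolean_algebra_decomposition}, replacing allowable $RG$-words by allowable rook placements and the increment operation by the upward shift $\alpha$. First I would record the two facts about the shading that drive everything. Writing $(r,c)$ for the square in row $r$ (counted from the top) and column $c$ of a length-$m$ staircase, a square is shaded precisely when $r+c \equiv m \pmod 2$; in particular the bottom square $(m-c,c)$ of column $c$ satisfies $(m-c)+c=m$ and is therefore \emph{shaded}. Consequently every \emph{unshaded} square has a square immediately beneath it, and that square is shaded.

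Next I would analyze the interval $[T,\alpha(T)]$ for a fixed $T \in \alrook{m}{n}$ with $\nrow(T)=j$. A rook in row $r$, column $c$ has $(m-c)-r$ squares beneath it, so an up-move increases $\below$ by one and $\below(\alpha(T)) = \below(T)+j$. Since $\alpha$ uses no horizontal moves, $T$ and $\alpha(T)$ occupy the same columns; as every cover relation either preserves the occupied columns (an up-move) or sends a rook to a strictly smaller column (a left-move), every saturated chain from $T$ to $\alpha(T)$ consists of up-moves only. Thus each $S \in [T,\alpha(T)]$ is obtained from $T$ by shifting up some subset of the $j$ non-first-row rooks, and as these rooks lie in distinct columns their shifts are independent. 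Therefore $[T,\alpha(T)]$ consists of exactly $2^{j}$ placements and is isomorphic to $B_j$. Because $\wt(T) = q^{\below(T)}(1+q)^{\nrow(T)} = q^i(1+q)^j$ and the poset is graded by $\below$, the rank of $T$ equals $i$ and $j=\nrow(T)$, as claimed.

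For the disjoint cover I would produce an explicit retraction sending each $R \in \rook{m}{n}$ to an allowable placement. Given $R$, move every rook lying in an unshaded square \emph{down} by one; by the shading facts this lands on the shaded square immediately below and remains on the board, while rooks already in shaded squares are left fixed. The resulting $T$ has all rooks shaded, so $T \in \alrook{m}{n}$, and a rook pushed from row $r$ to row $r+1$ has $r+1 \ge 2$, so in $T$ it is a non-first-row rook whose upward shift recovers its position in $R$; hence $R \in [T,\alpha(T)]$, giving covering. For disjointness I would note the construction is forced: inside any $[T',\alpha(T')]$ the rooks on unshaded squares are exactly the shifted non-first-row rooks of $T'$, so recovering $T'$ from $R$ amounts to pushing precisely those rooks back down. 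Thus $T'$ is uniquely determined by $R$, and $R \in [T,\alpha(T)] \cap [T',\alpha(T')]$ forces $T=T'$.

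The remaining verifications---independence of the shifts, the rank computation, and that $\alpha(T)$ is itself non-allowable because its shifted rooks become unshaded---parallel Theorem~\ref{theorem_Boolean_algebra_decomposition} almost verbatim. The one point requiring genuine care, and hence the main obstacle, is establishing that the push-down retraction is simultaneously well defined and canonical: well defined because the bottom square of each column is shaded, so no unshaded rook sits at the bottom and every unshaded rook can descend; and canonical because membership in a Boolean interval is detected exactly by which rooks occupy unshaded squares. Once these are secured, covering and disjointness follow together.
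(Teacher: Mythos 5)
Your proposal is correct and follows essentially the same route as the paper's own proof: the interval $[T,\alpha(T)]$ is identified with subset-shifts of the $j$ non-first-row rooks (hence $\cong B_j$ at rank $i=\below(T)$), and coverage plus disjointness come from the canonical retraction pushing every rook on an unshaded square down onto the shaded square beneath it. Your version merely supplies details the paper leaves implicit (the parity description of the shading, the argument that chains in $[T,\alpha(T)]$ use up-moves only, and the explicit uniqueness of the retraction), so there is nothing to correct.
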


\begin{proof}
We first show that for any $T \in \alrook{m}{n}$ 
with $\wt(T) = q^i \cdot (1+q)^j$
that the interval $[T, \alpha(T)] \cong B_j$. 
Since $\wt(T) = q^i \cdot (1+q)^j$, the rank of $T$ is $i$ 
and there are $j$ rooks in $T$ that are not in the first row. 
The rank $i+l$ elements in 
the interval $[T, \alpha(T)]$ correspond to shifting 
$l$ of those rooks up by one. 
It is straightforward to see that in the interval $[T, \alpha(T)]$ 
all of the elements except $T$ are in 
$\rook{m}{n} - \alrook{m}{n}$ 
since the rook that is shifted up by one will not be in a shaded square.

We next need to show that every element $
T \in \Gamma(m,n)$ occurs in some Boolean interval in this decomposition. 
This is vacuously true if $T \in \alrook{m}{n}$. 
Otherwise there are some rooks in $T$ that are not in shaded squares. 
Shift all such rooks down by one to obtain 
an allowable rook placement associated to $T$.
\end{proof}

Given a rook placement $T \in \prook{m}{n}$, 
let $N(T) = \{r_1, r_2, \ldots, r_s\}$ 
be the set of all rooks in $T$ that are not in shaded squares, 
where the rooks $r_i$ are labeled from left to right. 
We define the map $\partial$ as follows.

\begin{definition}
\label{definition_another_boundary}
Let $\partial: \prook{m}{n} \longrightarrow \mathbb{Z}[\prook{m}{n}]$ 
be the map defined by
$$
     \displaystyle 
     \partial(T) = \underset{r_i \in N(T)}\sum (-1)^{i-1} \cdot T_{r_i}\,,
$$
where $T_{r_i}$ is obtained by moving the rook $r_i$ in $T$ 
down by one square.
\end{definition}

\begin{lemma}
\label{lemma_boundary_stir1}
The map $\partial$ 
in Definition~\ref{definition_another_boundary}
 is a boundary map on the 
algebraic complex with $\prook{m}{n}$ as the support.
\end{lemma}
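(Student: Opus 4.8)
The plan is to verify the two defining requirements of a chain complex supported on $\prook{m}{n}$: first, that $\partial$ respects the grading and the order relation, sending a rank-$i$ element to a combination of rank-$(i-1)$ elements that lie strictly below it; and second, that $\partial^2 = 0$. The argument will run parallel to the computation of $\partial^2 = 0$ for the second-kind complex on $\Pi(n,k)$.

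First I would record the geometric facts that make $\partial$ well defined. Because the shading begins with the lowest antidiagonal, and that antidiagonal contains the bottommost square of every column of the staircase, each column has its bottom square shaded. Consequently an unshaded rook is never at the bottom of its column, so the square immediately below it exists; moving the rook into it is a legal placement (distinct rooks occupy distinct columns, so there is no collision), it decreases $\below$ by exactly one, and hence $T_{r_i} \prec T$ with $T_{r_i}$ of rank one less than $T$. This settles the grading and support requirements, and it also yields the key shading observation: since sliding a rook down by one square switches the parity of its antidiagonal, a rook that is unshaded in $T$ becomes shaded in $T_{r_i}$.

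The heart of the proof is the identity $\partial^2(T)=0$, and the main obstacle is the sign bookkeeping forced by the relabeling of $N(\cdot)$. Write $N(T) = \{r_1, \ldots, r_s\}$ from left to right. By the shading observation, in $T_{r_i}$ the rook $r_i$ has become shaded while every other rook is untouched, so $N(T_{r_i}) = N(T) \setminus \{r_i\}$; relabeling left to right, $r_\ell$ keeps its index for $\ell < i$ and drops to $\ell-1$ for $\ell > i$. Since two rooks sit in distinct columns, moving them down commutes, so $(T_{r_i})_{r_\ell} = (T_{r_\ell})_{r_i}$, which I denote $T_{\{i,\ell\}}$. Expanding $\partial^2(T) = \sum_i (-1)^{i-1}\partial(T_{r_i})$ and collecting, for a fixed pair $a < b$ the placement $T_{\{a,b\}}$ arises exactly twice: once from $i=b,\ \ell=a$ with sign $(-1)^{b-1}(-1)^{a-1}=(-1)^{a+b}$, and once from $i=a,\ \ell=b$ with the shifted sign $(-1)^{a-1}(-1)^{b-2}=-(-1)^{a+b}$. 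These cancel in pairs, giving $\partial^2(T)=0$.

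Finally I would note that the support condition is automatic: every monomial $T_{r_i}$ occurring in $\partial(T)$ satisfies $T_{r_i} <_P T$, so the coefficient $\partial_{y,x}$ vanishes unless $y <_P x$. Together with $\partial^2 = 0$, this shows $(\mathcal{C},\partial)$ is an algebraic complex with $\prook{m}{n}$ as support. The only place requiring genuine care is the relabeling-and-sign step, exactly as in the second-kind case; everything else reduces to the observation that down-moves of rooks in distinct columns commute and toggle shading.
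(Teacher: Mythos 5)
Your proposal is correct, but it proves the lemma by a different route than the paper. You verify everything directly: you check that the down-moves are well defined (the lowest antidiagonal is shaded, so an unshaded rook is never at the bottom of its column), that each term of $\partial(T)$ is a cover below $T$, and then you establish $\partial^2 = 0$ by the sign-cancellation computation, whose crux is your shading observation that a down-move toggles antidiagonal parity, so $N(T_{r_i}) = N(T)\setminus\{r_i\}$ with the left-to-right labels of the remaining unshaded rooks shifting predictably, while down-moves in distinct columns commute. This mirrors the paper's direct computation of $\partial^2 = 0$ for the second-kind complex on $\Pi(n,k)$ in Section~\ref{section_homological_Stembridge}. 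The paper instead proves Lemma~\ref{lemma_boundary_stir1} structurally: it observes that $\partial$ is supported on the Boolean intervals $[T,\alpha(T)]$ of the decomposition in Theorem~\ref{theorem_decomposition_Stirling_poset_first} and restricts on each to the natural boundary map of a Boolean algebra, so the complex is a direct sum of Boolean complexes and $\partial^2 = 0$ is inherited. What the paper's argument buys is brevity and an immediate payoff for homology (only the rank-zero Boolean summands, i.e.\ the unmatched placements, contribute, as used in Theorem~\ref{theorem_algebraic_complex_first_kind}); what your argument buys is self-containedness, since the paper's appeal to the decomposition tacitly requires exactly the facts you make explicit --- that an unshaded rook becomes shaded when moved down and that the index bookkeeping produces the standard Boolean signs --- so your computation can be read as the detailed verification that the restriction of $\partial$ to each interval $[T,\alpha(T)]$ really is the usual Boolean boundary.
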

\begin{proof}
The boundary map $\partial$ is supported
on the Boolean algebra decomposition
of the Stirling poset of the first
kind appearing in 
Theorem~\ref{theorem_decomposition_Stirling_poset_first}.
The second proof of 
Theorem~\ref{theorem_homology_basis_Stirling_second}
applies again
to show $\partial$ is a boundary map.
\end{proof}

\begin{theorem}
\label{theorem_algebraic_complex_first_kind}
For the algebraic complex $({\mathcal C}, \partial)$ 
supported by the Stirling poset of the first kind 
$\prook{m}{n}$,
a basis for the integer homology is given by 
the rook placements in 
$\rook{m}{n}$ having all of the rooks occur in shaded
squares in the first row.  Furthermore,
$$
     \sum_{i \geq 0} \dim (H_i(\mathcal{C}, \partial; \Zzz)) 
           \cdot q^i = q^{n(n-1)} \cdot
           \qchoose{\lfloor\frac{m}{2}\rfloor}{n}{q^2}.
$$
\end{theorem}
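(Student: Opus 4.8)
The plan is to follow the proof of Theorem~\ref{theorem_homology_basis_Stirling_second} essentially line for line, replacing the Stirling poset of the second kind by $\prook{m}{n}$ and invoking the first-kind analogue of each ingredient. By Lemma~\ref{lemma_boundary_stir1} the poset $\prook{m}{n}$ supports the algebraic complex $(\mathcal{C},\partial)$ with $\partial$ as in Definition~\ref{definition_another_boundary}, and by Theorem~\ref{theorem_acyclic_stir1} the matching $M$ is a discrete Morse matching. To apply the Hersh--Shareshian--Stanton result (Lemma~\ref{lemma_homology}) two things remain: that every matched pair contributes an invertible coefficient to $\partial$, and that all unmatched rook placements lie in ranks of a single parity.

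First I would verify the coefficient condition. A matched pair consists of a placement in which the distinguished rook $r$ (the leftmost rook not lying in a shaded square of the first row) sits in a shaded square, together with the placement obtained by sliding $r$ into the unshaded square immediately to its north; call the latter, higher element $T$ and the former $T'$, so that $T' \prec T$ and moving $r$ one square down carries $T$ to $T'$. In $T$ the rook $r$ occupies an unshaded square, so $r \in N(T)$, while every rook to the left of $r$ lies in a shaded square of the first row and is therefore excluded from $N(T)$. Hence $r$ is the leftmost element of $N(T)$, that is $r = r_1$, and the corresponding term in $\partial(T) = \sum_{r_i \in N(T)} (-1)^{i-1} T_{r_i}$ carries the sign $(-1)^{0} = 1$. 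Thus $\partial_{T',T} = 1$ is a unit, and the first hypothesis of Lemma~\ref{lemma_homology} holds.

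Next I would record the parity statement. An unmatched placement has all of its rooks in shaded squares of the first row, so $\nrow(T) = 0$ and $\wt(T) = q^{\below(T)}$, whence the rank of $T$ equals $\below(T)$. Because the shading alternates along antidiagonals starting from the lowest one, a first-row shaded square has an even number of squares to its south; consequently $\below(T)$ is a sum of even integers and is itself even, so the unmatched placements occur only in even ranks. Lemma~\ref{lemma_homology} therefore applies and gives $\dim H_i(\mathcal{C},\partial)$ equal to the number of unmatched rook placements of rank $i$. In particular the homology is free and concentrated in even degrees, with a basis indexed by the rook placements all of whose rooks lie in shaded squares of the first row; summing $q^i \dim H_i$ over $i$ then reproduces precisely the weighted generating function computed in Theorem~\ref{theorem_gen_stir1}, namely $q^{n(n-1)} \cdot \qchoose{\lfloor \frac{m}{2} \rfloor}{n}{q^2}$, which is the asserted identity.

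The step I expect to require the most care is the coefficient verification, since it is exactly the compatibility between the combinatorial matching $M$ and the algebraic boundary $\partial$: one must be certain that the rook moved by $M$ is genuinely the first rook of $N(T)$ in the upper element, so that no intervening sign disturbs invertibility. Alternatively, one can bypass Lemma~\ref{lemma_homology} entirely and argue as in the second proof of Theorem~\ref{theorem_homology_basis_Stirling_second}: by Theorem~\ref{theorem_decomposition_Stirling_poset_first} the complex decomposes as a direct sum of the standard complexes of the Boolean intervals $[T,\alpha(T)]$, and only the rank-zero intervals, that is the unmatched placements, contribute any homology, immediately yielding the stated basis and generating function.
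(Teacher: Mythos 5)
Your proposal is correct and takes essentially the same route as the paper, whose proof of Theorem~\ref{theorem_algebraic_complex_first_kind} consists precisely of citing Theorems~\ref{theorem_gen_stir1} and~\ref{theorem_acyclic_stir1} together with Lemmas~\ref{lemma_homology} and~\ref{lemma_boundary_stir1}. Your explicit checks --- that the rook moved by the matching is the leftmost element of $N(T)$ so that $\partial_{T',T} = 1$, and that shaded first-row squares force $\below(T)$ to be even so unmatched placements sit in even ranks --- are exactly the hypotheses of Lemma~\ref{lemma_homology} that the paper leaves implicit, and your alternative via Theorem~\ref{theorem_decomposition_Stirling_poset_first} mirrors the paper's second proof of Theorem~\ref{theorem_homology_basis_Stirling_second}.
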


\begin{proof}
The proof follows by applying Theorems~\ref{theorem_gen_stir1} 
and~\ref{theorem_acyclic_stir1} 
and Lemmas~\ref{lemma_homology} and~\ref{lemma_boundary_stir1}.
\end{proof}

\section{$(q,t)$-Stirling numbers and orthogonality}
\label{section_orthogonality}

In~\cite{Viennot} Viennot has some beautiful results
in which he gave combinatorial bijections for orthogonal polynomials
and their moment generating functions.
One well-known
relation between the ordinary
signed Stirling numbers of the first kind
and Stirling numbers of the second kind
is their orthogonality. 
A bijective proof of the orthogonality of  their $q$-analogues
via $0$-$1$ tableaux was given by de M\'edicis and
Leroux~\cite[Proposition 3.1]{de_Medicis_Leroux_unified}.

There are a number of  two-variable Stirling numbers of the
second kind using bistatistics
on $RG$-words and rook placements.  
See~\cite{Wachs_White} and the references therein. 
Letting $t = 1+q$
we define $(q,t)$-analogues
of the Stirling numbers of the first and second kind.
We show orthogonality holds combinatorially for the
$(q,t)$-version of the Stirling numbers via a
sign-reversing involution on ordered pairs of rook placements
and $RG$-words.

\begin{definition}
Define the 
$(q,t)$-Stirling numbers of the first and second kind by 
\begin{align}
\label{equation_qt_Stirling_first_kind}
      s_{q,t}[n,k] = (-1)^{n-k} \cdot \sum_{T \in \alrook{n}{n-k}}
                      q^{\below(T)} \cdot t^{\nrow(T)}
\end{align}
and
\begin{align}
\label{equation_qt_Stirling_second_kind}
      S_{q,t}[n,k] =  \sum_{w \in \Allow(n,k)}
                      q^{A(w)} \cdot t^{B(w)}.
\end{align}
\end{definition}

For what follows, let
\begin{equation}
     [k]_{q,t}  =
     \begin{cases}
      (q^{k-2} + q^{k-4} + \cdots + 1) \cdot t
     & \text{ when $k$ is even,}\\
       q^{k-1} + (q^{k-3} + q^{k-5} + \cdots + 1) \cdot t
     & \text{ when $k$ is odd.}
     \end{cases}
\label{equation_q_t_analogue}
\end{equation}

\begin{corollary}
The $(q,t)$-analogue of Stirling numbers of the first
and second kind
satisfy the following recurrences:
\begin{equation}
\label{equation_signed_Stirling_first_recurrence}
     s_{q,t}[n,k] = s_{q,t}[n-1,k-1] - [n-1]_{q,t} \cdot s_{q,t}[n-1,k]
     \:\:\:\mbox{     for $n \geq 1$ and $1 \leq k \leq n$,}
\end{equation}
and
\begin{align}
\label{equation_qt_Stirling_second_recurrence}
   \qtStirling{n}{k} = \qtStirling{n-1}{k-1}
           + [k]_{q,t} 
             \cdot \qtStirling{n-1}{k}
     \:\:\:\mbox{     for $n \geq 1$ and $1 \leq k \leq n$}
\end{align}
with
initial conditions 
$s_{q,t}[n,0] = \delta_{n,0}$ and
$\qtStirling{n}{0} = \delta_{n,0}$.
For $k > n$, we set
$s_{q,t}[n,k] = \qtStirling{n}{k} = 0$.
\end{corollary}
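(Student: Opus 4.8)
The plan is to observe that these two recurrences are nothing more than $t$-refinements of the $q$-recurrences already established inside the proofs of Theorems~\ref{theorem_q_Stirling_second_allowable} and~\ref{theorem_q_Stirling_first}. The key point is that neither of those proofs actually invoked the substitution $t = 1+q$: at the crucial step they collapsed a sum of the form $t + q^2 t + \cdots$ (possibly with a lone pure-$q$ term) into the $q$-integer $[k]_q$. If instead one keeps the variable $t$ symbolic throughout and tracks the two statistics separately—$A(\cdot)$ and $B(\cdot)$ for allowable $RG$-words, $\below(\cdot)$ and $\nrow(\cdot)$ for rook placements—rather than combining $(1+q)$ factors, the very same computation produces the piecewise quantity $[k]_{q,t}$ of~(\ref{equation_q_t_analogue}). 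Thus I would re-run each proof verbatim, stopping one step earlier than before.

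For the second kind recurrence~(\ref{equation_qt_Stirling_second_recurrence}), I would repeat the last-letter splitting of $\Allow(n,k)$. Deleting the final letter sends an allowable word either to a word in $\Allow(n-1,k-1)$, in the case that the last letter is the unique occurrence of the new maximum $k$, contributing exactly $\qtStirling{n-1}{k-1}$; or to a word whose first $n-1$ letters already exhaust $\{1,\ldots,k\}$, hence lie in $\Allow(n-1,k)$, with a final \emph{odd} letter $w_n$ appended. Under the weight $q^{A(w)} t^{B(w)}$ this appended letter contributes $q^{w_n-1} t$ when $w_n < m_{n-1}$ and $q^{w_n-1}$ when $w_n = m_{n-1}$. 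Summing over the admissible odd values of $w_n$ gives $(1 + q^2 + \cdots) t$ together with a terminal $q^{k-1}$ that is present precisely when $k$ is odd; comparing the two parities of $k$ shows this is exactly $[k]_{q,t}$, which yields~(\ref{equation_qt_Stirling_second_recurrence}).

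For the first kind I would first establish the \emph{unsigned} recurrence $\tilde c_{q,t}[n,k] = \tilde c_{q,t}[n-1,k-1] + [n-1]_{q,t}\,\tilde c_{q,t}[n-1,k]$ for the raw sum $\tilde c_{q,t}[n,k] = \sum_{T \in \alrook{n}{n-k}} q^{\below(T)} t^{\nrow(T)}$, by repeating the leftmost-column analysis from the proof of Theorem~\ref{theorem_q_Stirling_first}. Keeping $t$ symbolic, a rook in a shaded square of the length-$(n-1)$ leftmost column that is not in the first row contributes $q^{2i}\,t$, while a rook in the top square—shaded exactly when that column has odd length, i.e.\ when $n$ is even—contributes $q^{n-2}$; summing over the shaded positions gives $[n-1]_{q,t}$ in both parities of $n-1$. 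It then remains to reinstate the sign $(-1)^{n-k}$ from~(\ref{equation_qt_Stirling_first_kind}). Since the indices $(n-1,k-1)$ and $(n-1,k)$ carry signs $(-1)^{n-k}$ and $(-1)^{n-k-1}$ respectively, multiplying the unsigned recurrence by $(-1)^{n-k}$ converts the $+[n-1]_{q,t}$ coefficient into $-[n-1]_{q,t}$, producing~(\ref{equation_signed_Stirling_first_recurrence}). The boundary conditions $s_{q,t}[n,0] = \qtStirling{n}{0} = \delta_{n,0}$ and the vanishing for $k>n$ transfer directly from the corresponding facts about the sets $\Allow(n,k)$ and $\alrook{n}{n-k}$.

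The only genuine obstacle is parity bookkeeping. One must verify that the weight of the inserted last letter (second kind) and of the leftmost-column rook (first kind) reproduces the piecewise definition of $[k]_{q,t}$ exactly, including the solitary pure-$q$ term $q^{k-1}$ (respectively $q^{n-2}$) that appears only in the odd-$k$ (respectively $n$-even) case. For the first kind there is the additional subtlety that the sign must be applied to the already-computed unsigned recurrence, so that it is precisely the alternation of $(-1)^{n-k}$ between consecutive values of $k$ that flips the sign of the $[n-1]_{q,t}$ term.
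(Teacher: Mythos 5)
Your proposal is correct and is essentially the paper's own argument: the paper proves this corollary with the one-line remark ``Immediate from Theorem~\ref{theorem_q_Stirling_second_allowable} and Theorem~\ref{theorem_q_Stirling_first},'' and what that compresses is precisely your observation that the last-letter splitting of $\Allow(n,k)$ and the leftmost-column analysis of $\alrook{n}{n-k}$ never use the substitution $t=1+q$, so keeping $t$ symbolic yields $[k]_{q,t}$ (respectively $[n-1]_{q,t}$) in both parity cases. Your parity bookkeeping and the sign transfer via $(-1)^{n-k}$ for the first kind are exactly the details the paper leaves implicit, and they check out.
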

\begin{proof}
Immediate from Theorem~\ref{theorem_q_Stirling_second_allowable}
and Theorem~\ref{theorem_q_Stirling_first}.
\end{proof}

Recall the generating polynomials for the $q$-Stirling numbers are
\begin{equation}
\label{equation_q_Stirling_generating}
	(x)_{n,q} = \sum_{k=0}^n s_q[n,k] \cdot x^k
\:\:\: \mbox{    and    }  \:\:\:
	x^n = \sum_{k = 0}^n S_q[n,k] \cdot (x)_{k,q}.
\end{equation}
where
the $q$-analogue of the $k$th falling factorial of $x$
is given by
$$
   (x)_{k,q} = \displaystyle \prod_{m=0}^{k-1} (x-[m]_q).
$$
The expressions
in~(\ref{equation_q_Stirling_generating})
are due to Carlitz~\cite[Section 3]{Carlitz}.  
The case $q=1$ is due to Stirling in 1730
and was
his original definition for the Stirling numbers of the
first and second kind; see~\cite[Pages 8 and 11]{Stirling}.
We can generalize~(\ref{equation_q_Stirling_generating})
to $(q,t)$-polynomials.

\begin{theorem}
\label{theorem_generating_polynomials_q_t}
The generating polynomials for the $(q,t)$-Stirling numbers
are
\begin{equation}
\label{equation_generating_function_one}
	(x)_{n,q,t} = \sum_{k=0}^n s_{q,t}[n,k] \cdot x^k\,,
\end{equation}
and
\begin{equation}
\label{equation_generating_function_two}
	x^n = \sum_{k = 0}^n S_{q,t}[n,k] \cdot (x)_{k,q,t}\,,
\end{equation}
where $(x)_{k,q,t} = \displaystyle \prod_{m=0}^{k-1} (x-[m]_{q,t})$. 
\end{theorem}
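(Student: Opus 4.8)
The plan is to prove both identities by induction on $n$, each using its corresponding recurrence from the preceding Corollary, namely~(\ref{equation_signed_Stirling_first_recurrence}) for~(\ref{equation_generating_function_one}) and~(\ref{equation_qt_Stirling_second_recurrence}) for~(\ref{equation_generating_function_two}). Throughout I would lean on the boundary conventions $s_{q,t}[n,k] = \qtStirling{n}{k} = 0$ for $k>n$ (and the analogous vanishing for $k<0$) so that the sums may be freely reindexed without tracking endpoint cases.

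For identity~(\ref{equation_generating_function_one}), the base case $n=0$ records that the empty product $(x)_{0,q,t}=1$ matches $s_{q,t}[0,0]=\delta_{0,0}=1$. For the inductive step I would start from the factorization $(x)_{n,q,t} = (x)_{n-1,q,t}\cdot(x-[n-1]_{q,t})$, substitute the inductive hypothesis for $(x)_{n-1,q,t}$, distribute across the two terms $x$ and $-[n-1]_{q,t}$, and then collect the coefficient of $x^k$. That coefficient is precisely $s_{q,t}[n-1,k-1] - [n-1]_{q,t}\cdot s_{q,t}[n-1,k]$, which equals $s_{q,t}[n,k]$ by the first-kind recurrence~(\ref{equation_signed_Stirling_first_recurrence}), completing the step.

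For identity~(\ref{equation_generating_function_two}) the key auxiliary fact is the \emph{raising relation}
$$
   x\cdot(x)_{k,q,t} = (x)_{k+1,q,t} + [k]_{q,t}\cdot(x)_{k,q,t},
$$
which is immediate from the definition $(x)_{k+1,q,t}=(x)_{k,q,t}\,(x-[k]_{q,t})$. With this in hand, I would multiply the inductive hypothesis $x^{n-1}=\sum_{k}\qtStirling{n-1}{k}\,(x)_{k,q,t}$ by $x$, apply the raising relation termwise, reindex the shifted sum, and collect the coefficient of $(x)_{k,q,t}$. That coefficient is $\qtStirling{n-1}{k-1} + [k]_{q,t}\cdot\qtStirling{n-1}{k}$, which is $\qtStirling{n}{k}$ by the second-kind recurrence~(\ref{equation_qt_Stirling_second_recurrence}); the base case $n=0$ again reduces to $x^0=1=\qtStirling{0}{0}\,(x)_{0,q,t}$.

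There is no genuine obstacle here: both arguments are the standard Stirling-type connection-coefficient inductions, and the only care needed is the index bookkeeping at the summation endpoints, handled by the vanishing conventions above. I would remark that the two identities are formally inverse to one another, so one could in principle obtain the second from the first together with the orthogonality established in Section~\ref{section_orthogonality}; however, the direct inductions are cleaner and self-contained, and they require only the recurrences~(\ref{equation_signed_Stirling_first_recurrence}) and~(\ref{equation_qt_Stirling_second_recurrence}) already in hand.
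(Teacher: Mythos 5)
Your proposal is correct and takes essentially the same approach as the paper: both prove the two identities by induction on $n$, deriving each from the corresponding $(q,t)$-recurrence. The only difference is cosmetic --- you run each computation in the reverse direction, isolating the raising relation $x\cdot(x)_{k,q,t}=(x)_{k+1,q,t}+[k]_{q,t}\cdot(x)_{k,q,t}$ as an explicit step, whereas the paper uses the same fact implicitly by expanding $(x)_{k,q,t}=(x)_{k-1,q,t}\cdot(x-[k-1]_{q,t})$ and cancelling the two resulting sums.
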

\begin{proof}
Both identities follow by induction on $n$. 
It is straightforward to check the case $n=0$,
so suppose
the identities are true for $n-1$. 
Multiply the recurrence~(\ref{equation_signed_Stirling_first_recurrence})
for the signed $(q,t)$-Stirling numbers of the first kind by
by $x^k$ and sum over all $0 \leq k \leq n$ to give
\begin{eqnarray*}
\sum_{k=0}^n s_{q,t}[n,k]\cdot x^k
       &=& \sum_{k=0}^n (s_{q,t}[n-1,k-1] 
                      - [n-1]_{q,t} \cdot s_{q,t}[n-1,k])\cdot x^k \\
	&=& x \cdot \sum_{k=0}^{n-1} s_{q,t}[n-1,k]\cdot x^k - [n-1]_{q,t} \cdot \sum_{k=0}^{n-1} s_{q,t}[n-1,k] \cdot x^k \\
	&=& (x)_{n-1,q,t}\cdot (x-[n-1]_{q,t}) \\
	&=& (x)_{n,q,t}\,,
\end{eqnarray*}
which is the first identity.
For the second identity, multiply
the recurrence~(\ref{equation_qt_Stirling_second_recurrence})
for the $(q,t)$-Stirling number of the second kind
by
$(x)_{k,q,t}$
and sum over all $0 \leq k \leq n$ to give
\begin{eqnarray*}
     \sum_{k=0}^n S_{q,t}[n,k] \cdot  (x)_{k,q,t}
      &=& \sum_{k=0}^n (S_{q,t}[n-1,k-1] + [k]_{q,t} \cdot S_{q,t}[n-1,k]) 
           \cdot (x)_{k,q,t} \\
      &=& \sum_{k=0}^n S_{q,t}[n-1,k-1] \cdot (x)_{k-1,q,t}
           \cdot (x-[k-1]_{q,t}) \\
         && + \sum_{k=0}^n [k]_{q,t} \cdot S_{q,t}[n-1,k] \cdot (x)_{k,q,t} \\
      &=& x \cdot \sum_{k=0}^{n-1} S_{q,t}[n-1,k] \cdot (x)_{k,q,t} \\
      && - \sum_{k=0}^n [k-1]_{q,t} \cdot S_{q,t}[n-1,k-1] 
        + \sum_{k=0}^n [k]_{q,t} \cdot S_{q,t}[n-1,k]\,.
\end{eqnarray*}
The last two summations
cancel each other by shifting indices. 
Apply the induction hypothesis on the remaining summation
yields the desired result.
\end{proof}

\begin{theorem}
\label{theorem_orthogonality}
The $(q,t)$-Stirling numbers are orthogonal, that is, for $m \leq n$
\begin{equation}
\label{equation_orthogonality_first_second}
     \sum_{k=m}^n s_{q,t}[n,k]\cdot S_{q,t}[k,m] = \delta_{m,n}
\end{equation}
and
\begin{equation}
\label{equation_orthogonality_second_first}
     \sum_{k=m}^n S_{q,t}[n,k]\cdot s_{q,t}[k,m] = \delta_{m,n}.
\end{equation}
Furthermore, this orthogonality  holds bijectively.
\end{theorem}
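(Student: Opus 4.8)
The plan is to prove the two orthogonality relations first algebraically, as an immediate consequence of Theorem~\ref{theorem_generating_polynomials_q_t}, and then to upgrade the argument to an explicit sign-reversing involution. For the algebraic part, observe that the two generating polynomial identities~(\ref{equation_generating_function_one}) and~(\ref{equation_generating_function_two}) say precisely that the lower-triangular matrices $\big(s_{q,t}[n,k]\big)$ and $\big(\qtStirling{n}{k}\big)$ are the transition matrices between the power basis $\{x^k\}$ and the falling-factorial basis $\{(x)_{k,q,t}\}$. Substituting~(\ref{equation_generating_function_one}) into~(\ref{equation_generating_function_two}) gives
\begin{equation*}
     x^n = \sum_{j} \Big( \sum_{k} \qtStirling{n}{k} \cdot s_{q,t}[k,j] \Big) x^j,
\end{equation*}
and comparing coefficients of $x^j$ yields~(\ref{equation_orthogonality_second_first}). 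Substituting~(\ref{equation_generating_function_two}) into~(\ref{equation_generating_function_one}) and comparing coefficients in the linearly independent family $\{(x)_{j,q,t}\}$ yields~(\ref{equation_orthogonality_first_second}). Thus both relations hold.

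For the bijective statement, I would interpret the left-hand side of~(\ref{equation_orthogonality_first_second}) as a signed, weighted sum over ordered pairs $(T,w)$, where $T \in \alrook{n}{n-k}$ is an allowable rook placement and $w \in \Allow(k,m)$ is an allowable $RG$-word, the index $k$ ranging over $m \leq k \leq n$. By~(\ref{equation_qt_Stirling_first_kind}) and~(\ref{equation_qt_Stirling_second_kind}) such a pair carries sign $(-1)^{n-k}$ and weight $q^{\below(T)+A(w)} \cdot t^{\nrow(T)+B(w)}$. The goal is a weight-preserving, sign-reversing involution $\Phi$ on this set whose only fixed point, occurring exactly when $m=n$, is the pair consisting of the empty placement in $\alrook{n}{0}$ together with the word $12\cdots n \in \Allow(n,n)$, which has sign $+1$ and weight $1$; every other pair is to be canceled against its image under $\Phi$.

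The involution itself I would model on the de~M\'edicis--Leroux $0$-$1$-tableau argument~\cite[Proposition 3.1]{de_Medicis_Leroux_unified}, adapted to the shaded boards and allowable words used here. Reading the two structures in a fixed priority order, one locates the first site admitting a local toggle that either removes a designated rook from $T$ and lengthens $w$ by a repeated odd letter, or performs the reverse; each toggle changes $k$ by one and hence flips the sign. The key point is that the $t$-powers are preserved: a rook counted by $\nrow(T)$ lies outside the first row and contributes a factor $t$, and under the toggle this corresponds exactly to an index counted by $B(w)$, where a letter sits strictly below the running maximum and likewise contributes $t$; the $q$-powers match by the parallel bookkeeping between $\below$ and $A$.

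The main obstacle will be making this toggle simultaneously well defined, that is, always applicable away from the single fixed point, weight preserving across both the $q$ and $t$ gradings, and genuinely involutive. This requires a carefully chosen scanning rule so that applying the rule twice returns the original pair, together with a case analysis reconciling the shading constraint defining $\alrook{n}{n-k}$ with the allowability constraint defining $\Allow(k,m)$, since a local change on one side must land inside the admissible set on the other. Finally, the companion relation~(\ref{equation_orthogonality_second_first}) follows either from the symmetric involution on pairs $(w,T)$ with the roles of the two structures exchanged, or formally from the fact that a one-sided inverse of a lower-triangular matrix with unit diagonal is automatically two-sided.
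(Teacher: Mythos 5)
Your algebraic half is correct: substituting~(\ref{equation_generating_function_one}) into~(\ref{equation_generating_function_two}) and comparing coefficients does yield~(\ref{equation_orthogonality_second_first}), and the reverse substitution yields~(\ref{equation_orthogonality_first_second}); indeed the paper itself remarks that orthogonality is immediate from Theorem~\ref{theorem_generating_polynomials_q_t} via the change-of-basis matrices. But the theorem's final claim --- that orthogonality holds \emph{bijectively} --- is exactly where your proposal stops, and it is where the paper's entire proof lives. You set up the correct signed set $C = \bigcup_{k=m}^{n} \alrook{n}{n-k}\times\allow{k}{m}$ and the right cancellation heuristic (remove a rook $\leftrightarrow$ insert a repeated odd letter), but you never define the toggle, and you explicitly flag well-definedness, weight preservation, and involutivity as open obstacles. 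The paper resolves all three at once, not with a local ``first site admitting a toggle'' scan, but with a global comparison between one statistic from each structure: label the columns of $T$ from right to left, let $l_1$ be the label of the column of the rightmost rook (with $l_1=\infty$ if $T$ is empty) and $\rb(T)$ the number of squares below that rook; in $w$, let $r$ be the first repeated (odd) letter and $l_2$ the letter immediately to its left ($l_2=\infty$ if $w$ has no repeat). If $l_1 \leq l_2$ one deletes the rightmost rook and inserts the odd letter $\rb(T)+1$ just after the entry $l_1$; if $l_1 > l_2$ one deletes $r$ and places a rook in column $l_2$ with $r-1$ squares beneath it. This dichotomy is what makes the map an involution (after the move the inserted letter becomes the first repeat and the inequality reverses), keeps both outputs inside the allowable sets (the inserted letter is odd; the new rook has an even number of squares below it, hence sits on a shaded square), and drives the weight bookkeeping via the subcase $\rb(T)+1=l_1$ (rook in the first row, factor $q^{l_1-1}$) versus $\rb(T)+1<l_1$ (factor $q^{\rb(T)}\cdot t$), matching $A$, $B$ against $\below$, $\nrow$; finally $l_1=l_2=\infty$ forces $k=n$ and $k=m$ simultaneously, so for $n>m$ there are no fixed points. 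None of this is automatic, and ``applying the rule twice returns the original pair'' does not follow from your description.

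A second concrete divergence: the paper does \emph{not} obtain~(\ref{equation_orthogonality_second_first}) by exchanging the roles of the two structures. There the pairs live in $\allow{n}{k}\times\alrook{k}{k-m}$, where the \emph{board size} varies with the summation index $k$, so the second involution $\psi$ must add or delete an entire column of the staircase board (trimming a square from each column to its left) while raising or lowering a terminal block of letters of $w$, keyed to the \emph{last} repeated odd letter and the \emph{leftmost} rook --- a genuinely different construction from $\varphi$, with its own case analysis. Your fallback, that a one-sided inverse of a unitriangular matrix is two-sided, proves the numerical identity~(\ref{equation_orthogonality_second_first}) but not bijectively, so it does not discharge the last sentence of the theorem.
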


Notice that orthogonality of the $(q,t)$-Stirling numbers
follows immediately 
from Theorem~\ref{theorem_generating_polynomials_q_t}
which gives the change of basis matrices between the
ordered bases
$(1, x, x^2, x^3, \ldots)$ and
$((x)_{0,q,t},$ $(x)_{(1,q,t)},$ $x_{(2,q,t)},$ $x_{(3,q,t)}, \ldots)$
for the polynomial ring
$\Qqq(q,t)[x]$.
We now instead provide a bijective proof.

\begin{proof}
When $m = n$ since $s_{q,t}[n,n] = S_{q,t}[n,n] = 1$, 
both identities are trivial.
Suppose now that $n>m$.
The left-hand side of~(\ref{equation_orthogonality_first_second}) 
is the total weight of the set
$$
	C = \bigcup_{k=m}^{n} \alrook{n}{n-k}\times\allow{k}{m},
$$
where the weight of $(T, w)\in C$ is defined by
$$
	\wt(T, w) = (-1)^{n-k} \cdot \wt(T)\cdot\wt(w).
$$
Here $\wt(w) = q^{A(w)} \cdot t^{B(w)}$ 
and $\wt(T) = q^{\below(T)} \cdot t^{\nrow(T)}$ 
where the statistics $A(\cdot)$,
$B(\cdot)$,
$\below(\cdot)$ and $\nrow(\cdot)$
are defined in Sections~\ref{section_allowable} and~\ref{section_first_kind}.
We wish to show that 
$\wt(C) = \sum_{(T, w)\in C}\wt(T, w) = 0$ by constructing 
a weight-preserving sign-reversing involution 
$\varphi$ on~$C$ with no fixed points.

\begin{figure}
\centering
\subfloat[Example when $l_1 \leq l_2$]
{
\begin{tikzpicture}[>=triangle 45,x=1.0cm,y=1.0cm, scale=0.5]
\clip(-4,-1) rectangle (14.5,5.5);
\fill[color=gray,fill=gray,fill opacity=0.2] (1,5) -- (0,5) -- (0,4) -- (1,4) -- cycle;
\fill[color=gray,fill=gray,fill opacity=0.2] (-1,5) -- (-2,5) -- (-2,4) -- (-1,4) -- cycle;
\fill[color=gray,fill=gray,fill opacity=0.2] (-2,4) -- (-3,4) -- (-3,3) -- (-2,3) -- cycle;
\fill[color=gray,fill=gray,fill opacity=0.2] (-1,4) -- (0,4) -- (0,3) -- (-1,3) -- cycle;
\fill[color=gray,fill=gray,fill opacity=0.2] (-1,3) -- (-2,3) -- (-2,2) -- (-1,2) -- cycle;
\fill[color=gray,fill=gray,fill opacity=0.2] (10,5) -- (11,5) -- (11,4) -- (10,4) -- cycle;
\fill[color=gray,fill=gray,fill opacity=0.2] (10,4) -- (10,3) -- (9,3) -- (9,4) -- cycle;
\fill[color=gray,fill=gray,fill opacity=0.2] (9,5) -- (8,5) -- (8,4) -- (9,4) -- cycle;
\fill[color=gray,fill=gray,fill opacity=0.2] (8,4) -- (7,4) -- (7,3) -- (8,3) -- cycle;
\fill[color=gray,fill=gray,fill opacity=0.2] (8,3) -- (9,3) -- (9,2) -- (8,2) -- cycle;
\fill[color=gray,fill=gray,fill opacity=0.2] (8,2) -- (7,2) -- (7,1) -- (8,1) -- cycle;
\fill[color=gray,fill=gray,fill opacity=0.2] (-2,2) -- (-3,2) -- (-3,1) -- (-2,1) -- cycle;
\draw (-3,5)-- (1,5);
\draw (-3,4)-- (1,4);
\draw (-3,3)-- (0,3);
\draw (-3,2)-- (-1,2);
\draw (-3,1)-- (-2,1);
\draw (-3,5)-- (-3,1);
\draw (-2,5)-- (-2,1);
\draw (-1,5)-- (-1,2);
\draw (0,5)-- (0,3);
\draw (1,5)-- (1,4);
\draw (7,5)-- (11,5);
\draw (7,4)-- (11,4);
\draw (7,3)-- (10,3);
\draw (7,2)-- (9,2);
\draw (7,1)-- (8,1);
\draw (7,5)-- (7,1);
\draw (8,5)-- (8,1);
\draw (9,5)-- (9,2);
\draw (10,5)-- (10,3);
\draw (11,5)-- (11,4);
\draw [line width=1.25pt] (-1,4)-- (0,3);
\draw [line width=1.25pt] (0,4)-- (-1,3);
\draw [line width=1.25pt] (-2,5)-- (-1,4);
\draw [line width=1.25pt] (-1,5)-- (-2,4);
\draw [line width=1.25pt] (8,5)-- (9,4);
\draw [line width=1.25pt] (9,5)-- (8,4);
\draw (1.08,4.02) node[anchor=north west] {$\times \:\:\:\: 121$};
\draw (11.08,4.02) node[anchor=north west] {$\times \:\:\:\: 1211$};
\draw (4.0,3.76) node[anchor=north west, font=\huge] {$\longmapsto$};
\draw (-1.5,0.7) node[anchor=north west] {$l_1 = 2, l_2 = 2$};
\draw (7.5,0.7) node[anchor=north west] {$l_1 = 3, l_2 = 2$};
\end{tikzpicture}
}

\qquad
\subfloat[Example when $l_1 > l_2$.]
{
\begin{tikzpicture}[>=triangle 45,x=1.0cm,y=1.0cm, scale = 0.5]
\clip(-3.4,-1) rectangle (15.5,7);
\fill[color=gray,fill=gray,fill opacity=0.2] (1,5) -- (0,5) -- (0,4) -- (1,4) -- cycle;
\fill[color=gray,fill=gray,fill opacity=0.2] (-1,5) -- (-2,5) -- (-2,4) -- (-1,4) -- cycle;
\fill[color=gray,fill=gray,fill opacity=0.2] (-2,4) -- (-3,4) -- (-3,3) -- (-2,3) -- cycle;
\fill[color=gray,fill=gray,fill opacity=0.2] (-1,4) -- (0,4) -- (0,3) -- (-1,3) -- cycle;
\fill[color=gray,fill=gray,fill opacity=0.2] (-1,3) -- (-2,3) -- (-2,2) -- (-1,2) -- cycle;
\fill[color=gray,fill=gray,fill opacity=0.2] (10,5) -- (11,5) -- (11,4) -- (10,4) -- cycle;
\fill[color=gray,fill=gray,fill opacity=0.2] (10,4) -- (10,3) -- (9,3) -- (9,4) -- cycle;
\fill[color=gray,fill=gray,fill opacity=0.2] (9,5) -- (8,5) -- (8,4) -- (9,4) -- cycle;
\fill[color=gray,fill=gray,fill opacity=0.2] (8,4) -- (7,4) -- (7,3) -- (8,3) -- cycle;
\fill[color=gray,fill=gray,fill opacity=0.2] (8,3) -- (9,3) -- (9,2) -- (8,2) -- cycle;
\fill[color=gray,fill=gray,fill opacity=0.2] (8,2) -- (7,2) -- (7,1) -- (8,1) -- cycle;
\fill[color=gray,fill=gray,fill opacity=0.2] (-2,2) -- (-3,2) -- (-3,1) -- (-2,1) -- cycle;
\draw (-3,5)-- (1,5);
\draw (-3,4)-- (1,4);
\draw (-3,3)-- (0,3);
\draw (-3,2)-- (-1,2);
\draw (-3,1)-- (-2,1);
\draw (-3,5)-- (-3,1);
\draw (-2,5)-- (-2,1);
\draw (-1,5)-- (-1,2);
\draw (0,5)-- (0,3);
\draw (1,5)-- (1,4);
\draw (7,5)-- (11,5);
\draw (7,4)-- (11,4);
\draw (7,3)-- (10,3);
\draw (7,2)-- (9,2);
\draw (7,1)-- (8,1);
\draw (7,5)-- (7,1);
\draw (8,5)-- (8,1);
\draw (9,5)-- (9,2);
\draw (10,5)-- (10,3);
\draw (11,5)-- (11,4);
\draw [line width=1.25pt] (-2,3)-- (-1,2);
\draw [line width=1.25pt] (-1,3)-- (-2,2);
\draw [line width=1.25pt] (8,3)-- (9,2);
\draw [line width=1.25pt] (9,3)-- (8,2);
\draw [line width=1.25pt] (9,4)-- (10,3);
\draw [line width=1.25pt] (10,4)-- (9,3);
\draw (1.08,4.02) node[anchor=north west] {$\times \:\:\:\: 1211$};
\draw (11.08,4.02) node[anchor=north west] {$\times \:\:\:\: 121$};
\draw (4.0,3.76) node[anchor=north west, font=\huge] {$\longmapsto$};
\draw (-1.5,0.7) node[anchor=north west] {$l_1 = 3, l_2 = 2$};
\draw (7.5,0.7) node[anchor=north west] {$l_1 = 2, l_2 = 2$};
\end{tikzpicture}
}
\caption{Examples of the bijection proving
the identity~(\ref{equation_orthogonality_first_second}).}
\label{figure_first_orthogonality_proof}
\end{figure}
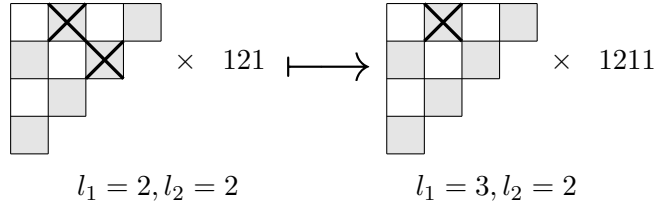
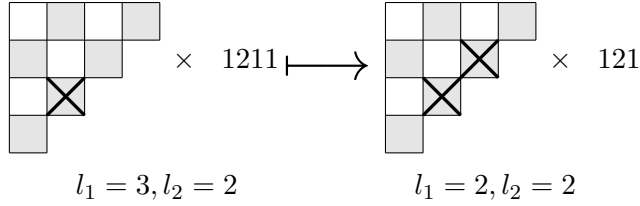

For any pair $(T, w)\in \alrook{n}{n-k}\times\allow{k}{m}$, 
define the map $\varphi$ as follows. 
Label the columns of
$T \in \alrook{n}{n-k}$
from right to
left with $1$ through $n-1$.
Let $l_1$ be the 
label of the
rightmost column in $T$ that has a rook. 
If $T$ has no rooks, let $l_1 = \infty$. 
Denote by
$\rb(T)$ the number of squares below the rightmost rook in $T$.
If
$l_1 = \infty$, 
let $\rb(T) = 0$. 
For $w \in \allow{k}{m}$, let $r$
be the first repeating (odd) integer reading 
the entries of $w$ from left to right, 
and let $l_2$ denote the number appearing
to the left of the entry $r$ in the $RG$-word $w$. 
If there is no repeating integer, let $l_2 = \infty$. 
Note that $\rb(T)$ must be even.

If $l_1 \leq l_2$, remove the rightmost rook in $T$ to form the 
rook placement $T'$. Insert the entry
$\rb(T)+1$  to the right of the entry
$l_1$ 
to obtain the word $w'$.   Since $l_1 \leq l_2$,
$\rb(T)+1 \leq l_1 \leq l_2$ and $\rb(T)+1$ is odd, so we have $w'$ is
an allowable word of length $k+1$. Hence
$(t', w') \in \alrook{n}{n-k-1}\times \allow{k+1}{m}$. Also
since we removed the rightmost rook in $T$ to obtain $T'$, we know
$\wt(T) = q^{l_1}\cdot \wt(T')$ if $\rb(T)+1 = l_1$, that is, the
rightmost rook is in the first row, or that
$\wt(T) = q^{\rb(T)}\cdot t \cdot \wt(T')$ if $\rb(T) + 1 < l_1$, 
that is, the rightmost rook is
not in the first row.  
We also know that 
$\wt(w') = q^{l_1-1}\cdot \wt(w)$ if $l_1 = \rb(T)+1$, or 
$\wt(w') =q^{\rb(T)}\cdot t \cdot \wt(w)$ if $\rb(T)+1<l_1$. Thus
$\wt(T', w') = (-1)^{n-k-1} \cdot \wt(T')\cdot\wt(w') =
-\wt(T, w)$.

On the other hand, if $l_1 > l_2$, 
delete the entry  $r$ in $w$ 
to obtain $w'$.
In column $l_2$ of $T$ add 
a rook so that there are  $r-1$ empty squares below it.
Similarly, one can check that 
$(T', w') \in \alrook{n}{n-k+1} \times \allow{k-1}{m}$ and $\wt(T', w') = -\wt(T, w)$.

Since all pairs $(T, w) \in \alrook{n}{n-k} \times \allow{k}{m}$ 
are mapped under $\varphi$, there are no fixed
points in $C$, hence~(\ref{equation_orthogonality_first_second}) is true.

The proof of the 
second identity~(\ref{equation_orthogonality_second_first}) 
follows in a similar fashion.
The left-hand side of~(\ref{equation_orthogonality_second_first}) 
is the total weight of the set
$$
	D = \bigcup_{k = m}^{n} \allow{n}{k} \times \alrook{k}{k-m}\,,
$$
where $\wt(w, T) = (-1)^{k-m} \cdot \wt(w) \cdot \wt(T)$.
We show that 
$\wt(D) = \sum_{(w, T)\in D}\wt(w, T) = 0$ by constructing 
a weight-preserving sign-reversing involution $\psi$ on~$D$ with no fixed points.

For $(w, T)\in \allow{n}{k} \times \alrook{k}{k-m}$, 
define the following.
Let $w_i = r_1$ be the last repeated odd integer
in $w$ reading from left to right,
and let $l_1$ be the maximum entry in $w$ 
occurring before $w_i$. 
If there is no repeated entry in $w$, let $l_1 = 0$. 
Let $l_2$ be the label of the leftmost column in $T$ with a rook in it 
and let $r_2$ be the number of squares above that rook. 
If there are no rooks in $T$ let $l_2 = 0$.
As before, we are labeling the columns right to left with 
$1$ through $n-1$.

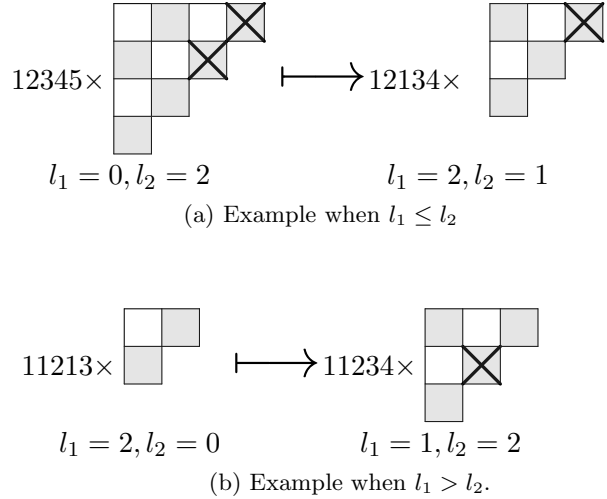
\begin{figure}[tb]
\centering
\subfloat[Example when $l_1 \leq l_2$]
{
\begin{tikzpicture}[line cap=round,line join=round,>=triangle 45,x=1.0cm,y=1.0cm,scale=0.5]
\clip(-5,1) rectangle (12,7);
\draw (-2.0,6.0)-- (2.0,6.0);
\draw (-2.0,5.0)-- (2.0,5.0);
\draw (-2.0,4.0)-- (1.0,4.0);
\draw (-2.0,3.0)-- (0.0,3.0);
\draw (-2.0,2.0)-- (-1.0,2.0);
\draw (-2.0,6.0)-- (-2.0,2.0);
\draw (-1.0,6.0)-- (-1.0,2.0);
\draw (0.0,6.0)-- (0.0,3.0);
\draw (1.0,6.0)-- (1.0,4.0);
\draw (2.0,6.0)-- (2.0,5.0);

\draw [line width=1.25pt] (1.0,6.0)-- (2.0,5.0);
\draw [line width=1.25pt]  (2.0,6.0)-- (1.0,5.0);
\draw [line width=1.25pt]  (0.0,5.0)-- (1.0,4.0);
\draw [line width=1.25pt]  (1.0,5.0)-- (0.0,4.0);

\draw (8.0,6.0)-- (11.0,6.0);
\draw (8.0,5.0)-- (11.0,5.0);
\draw (8.0,4.0)-- (10.0,4.0);
\draw (8.0,3.0)-- (9.0,3.0);
\draw (8.0,6.0)-- (8.0,3.0);
\draw (9.0,6.0)-- (9.0,3.0);
\draw (10.0,6.0)-- (10.0,4.0);
\draw (11.0,6.0)-- (11.0,5.0);
\draw [line width=1.25pt] (10.0,6.0)-- (11.0,5.0);
\draw [line width=1.25 pt] (11.0,6.0)-- (10.0,5.0);
\fill[color=gray,fill=gray,fill opacity=0.2] (1.0,6.0) -- (1.0,5.0) -- (2.0,5.0) -- (2.0,6.0) -- cycle;
\fill[color=gray,fill=gray,fill opacity=0.2] (-1.0,6.0) -- (-1.0,5.0) -- (0.0,5.0) -- (0.0,6.0) -- cycle;
\fill[color=gray,fill=gray,fill opacity=0.2] (0.0,5.0) -- (-0.0,4.0) -- (1.0,4.0) -- (1.0,5.0) -- cycle;
\fill[color=gray,fill=gray,fill opacity=0.2] (-1.0,4.0) -- (-1.0,3.0) -- (0.0,3.0) -- (0.0,4.0) -- cycle;
\fill[color=gray,fill=gray,fill opacity=0.2] (-2.0,3.0) -- (-2.0,2.0) -- (-1.0,2.0) -- (-1.0,3.0) -- cycle;
\fill[color=gray,fill=gray,fill opacity=0.2] (-2.0,5.0) -- (-2.0,4.0) -- (-1.0,4.0) -- (-1.0,5.0) -- cycle;
\fill[color=gray,fill=gray,fill opacity=0.2] (10.0,6.0) -- (11.0,6.0) -- (11.0,5.0) -- (10.0,5.0) -- cycle;
\fill[color=gray,fill=gray,fill opacity=0.2] (9.0,6.0) -- (8.0,6.0) -- (8.0,5.0) -- (9.0,5.0) -- cycle;
\fill[color=gray,fill=gray,fill opacity=0.2] (9.0,5.0) -- (10.0,5.0) -- (10.0,4.0) -- (9.0,4.0) -- cycle;
\fill[color=gray,fill=gray,fill opacity=0.2] (8.0,4.0) -- (9.0,4.0) -- (9.0,3.0) -- (8.0,3.0) -- cycle;
\draw (-5,4.5) node[anchor=north west] {$12345 \times$};
\draw (2.1,4.5) node[anchor=north west, font=\huge] {$\longmapsto$};
\draw (4.5,4.5) node[anchor=north west] {$12134 \times$};
\draw (-4.0,2.0) node[anchor=north west] {$l_1 = 0, l_2 = 2$};
\draw (5.0,2.0) node[anchor=north west] {$l_1 = 2, l_2 = 1$};
\end{tikzpicture}
}

\qquad
\subfloat[Example when $l_1 > l_2$.]
{
\begin{tikzpicture}[line cap=round,line join=round,>=triangle 45,x=1.0cm,y=1.0cm,scale=0.5]
\clip(-8.0,0.0) rectangle (10,6);
\draw (-5.0,4.0)-- (-3.0,4.0);
\draw (-5.0,3.0)-- (-3.0,3.0);
\draw (-5.0,2.0)-- (-4.0,2.0);
\draw (-5.0,4.0)-- (-5.0,2.0);
\draw (-4.0,4.0)-- (-4.0,2.0);
\draw (-3.0,4.0)-- (-3.0,3.0);
\draw (3.0,4.0)-- (6.0,4.0);
\draw (3.0,3.0)-- (6.0,3.0);
\draw (3.0,2.0)-- (5.0,2.0);
\draw (3.0,1.0)-- (4.0,1.0);
\draw (3.0,4.0)-- (3.0,1.0);
\draw (4.0,4.0)-- (4.0,1.0);
\draw (5.0,4.0)-- (5.0,2.0);
\draw (6.0,4.0)-- (6.0,3.0);
\draw [line width=1.25pt] (5.0,2.0)-- (4.0,3.0);
\draw [line width=1.25pt] (5.0,3.0)-- (4.0,2.0);
\fill[color=gray,fill=gray,fill opacity=0.2] (-4.0,4.0) -- (-3.0,4.0) -- (-3.0,3.0) -- (-4.0,3.0) -- cycle;
\fill[color=gray,fill=gray,fill opacity=0.2] (-5.0,3.0) -- (-4.0,3.0) -- (-4.0,2.0) -- (-5.0,2.0) -- cycle;
\fill[color=gray,fill=gray,fill opacity=0.2] (5.0,4.0) -- (6.0,4.0) -- (6.0,3.0) -- (5.0,3.0) -- cycle;
\fill[color=gray,fill=gray,fill opacity=0.2] (4.0,3.0) -- (5.0,3.0) -- (5.0,2.0) -- (4.0,2.0) -- cycle;
\fill[color=gray,fill=gray,fill opacity=0.2] (4.0,4.0) -- (3.0,4.0) -- (3.0,3.0) -- (4.0,3.0) -- cycle;
\fill[color=gray,fill=gray,fill opacity=0.2] (3.0,2.0) -- (4.0,2.0) -- (4.0,1.0) -- (3.0,1.0) -- cycle;
\draw (-8.0,3.0) node[anchor=north west] {$11213 \times$};
\draw (-2.4,3.0) node[anchor=north west, font=\huge] {$\longmapsto$};
\draw (0.0,3.0) node[anchor=north west] {$11234 \times$};
\draw (-7.0,1.0) node[anchor=north west] {$l_1 = 2, l_2 = 0$};
\draw (1,1.0) node[anchor=north west] {$l_1 = 1, l_2 = 2$};
\end{tikzpicture}
}
\caption{Examples of the bijection proving the identity~(\ref{equation_orthogonality_second_first}).}
\label{figure_second_orthogonality_proof}
\end{figure}

The bijection is built as follows.
If $l_1 > l_2$, raise $w_i = r_1$ to $l_1 + 1$ and 
increase all of the entries to the right of $w_i$ by $1$.
Denote the new word by $w'$. 
Since $w_i$ is the last repeated odd entry, 
the $RG$-word $w$ is of the form 
$w = \cdots l_1 \cdots r_1 (l_1+1) (l_1+2) \cdots k$. 
Then by definition, 
the new word $w'$ is of the form 
$w' = \cdots l_1 \cdots (l_1+1)(l_1+2)(l_1+3) \cdots (k+1)$. 
This still is an allowable word 
since the first $i-1$ entries in $w'$ are the same 
as that in $w$ and 
the remaining entries form an  increasing sequence. 
So $w' \in \allow{n}{k+1}$. 
Also, in $w$ the entries after $w_i$ 
do not contribute to $\wt(w)$ 
since there are no repeated entries. 
When $w_i$ is raised to $l_1+1$, 
the weight loss  is $q^{r_1 - 1}$ if $r_1 = l_1$ 
or $q^{r_1 - 1} \cdot t$ if $r_1 < l_1$. 
In the staircase board $T$, 
form a new rook placement $T'$ by
first adding a column of length $k$ to the left, 
and then placing a rook in column $l_1$ counting from right to left 
such that there are $r_1-1$ squares below the rook. 
Clearly $T'$ has $k$ columns and $k+1-m$ rooks. 
Since the new rook was placed 
so that there are now an
even number of squares below it, 
this rook is in a shaded square. 
Also since $l_1 > l_2$, 
there is no other rook in column~$l_1$. 
Hence $T' \in \alrook{k+1}{k+1-m}$.
Observe when we add a rook to obtain~$T'$, 
if the new rook is added in the first row, that is, $r_1 = l_1$ then
the weight is increased by~$q^{r_1-1}$.
If the new rook is not in the first row, that is,
$r_1 < l_1$ then 
the weight is increased by $q^{r_1-1} \cdot t$.
Hence $\wt(w', T') = -\wt(w, T)$.

If $l_1 \leq l_2$, replace the entry $w_j = l_2+1$ in 
$w$ by $l_2 - r_2$ and subtract $1$ from all 
of the entries to the right of $w_j$ to obtain $w'$. 
Since $w = \cdots l_1 \cdots r_1 (l_1+1)\cdots k$ 
and $l_1 \leq l_2 \leq k-1$, 
we have that $w_j = l_2+1$ appears to the right of $w_i$ 
and hence such an entry is unique. 
Also $r_2 + 1 \leq l_2$ gives $l_2 - r_2 \geq 1$.
This difference is always odd by the fact 
that the rook is in a shaded square. 
So $w' = \cdots l_1 \cdots l_2 (l_2-r_2) (l_2+1) \cdots (k-1)$ 
is an $RG$-word with even integers appearing just once, 
hence $w' \in \allow{n}{k-1}$. 
The entry $w_{j-1}' = l_2$, 
and $w_j' = l_2 - r_2$ contributes 
a weight of
$q^{l_2-r_2-1}$ if $l_2 = l_2 - r_2$, that is, $r_2 = 0$ 
or $q^{l_2-r_2-1} \cdot t$ if $r_2 > 0$.
Delete the column $l_2$ in $T$ 
and delete one square from the bottom 
in all columns to the left of 
column $l_2$ to make the new staircase chessboard $T'$.
It is straightforward to check that
$T' \in \alrook{(k-1}{k-1-m}$. 
Deleting the rook in $T$ will decrease its 
weight by $q^{l_2-(r_2+1)}$ if the rook is in the first row, 
that is, $r_2 = 0$
or by $q^{l_2-r_2-1} \cdot t$ if the rook is not in the first row,
that is, $r_2 > 0$. 
Hence $\wt(w', T') = -\wt(w, T)$.
The map we described is a 
weight-preserving sign-reversing involution with no fixed points,
so the orthogonality in~(\ref{equation_orthogonality_second_first})
follows.
\end{proof}

See Figures~\ref{figure_first_orthogonality_proof}
and~\ref{figure_second_orthogonality_proof}
for examples 
of the bijections
occurring in the proof of Theorem~\ref{theorem_orthogonality}.

\section{Concluding remarks}

The Stirling numbers of the first kind and second kind
are specializations of 
the homogeneous and elementary symmetric functions:
\begin{align}
S(n,k) = h_{n-k}(x_1, \ldots, x_k),
\:\:\:\:\:\:\:\:\:\:
c(n,k) =  e_{n-k}(x_1, \ldots, x_{n-1}),
\end{align}
where 
$x_m = m$.
The $q$-Stirling numbers are also specializations of
these Schur functions with
$x_m = [m]_q$.
See~\cite[Chapter I, Section 2, Example 11]{Macdonald}.
For the 
$(q,t)$-versions take
$x_m = [m]_{q,t}$ as defined in~(\ref{equation_q_t_analogue}).
A more general statement of orthogonality is
\begin{align}
   \sum_{k=j}^n (-1)^{n-k} \cdot e_{n-k}(x_1, \ldots, x_{n-1})  \cdot
                  h_{k-j}(x_1, \ldots, x_{j})  = \delta_{n,j}.
\end{align}
The specializations imply orthogonality of
the $(q,t)$-Stirling numbers, though not combinatorially
as in Theorem~\ref{theorem_orthogonality}.
It remains to find a combinatorial proof of
Theorem~\ref{theorem_generating_polynomials_q_t}.

Stembridge's $q=-1$ phenomenon~\cite{Stembridge, Stembridge_canonical},
and the more general
cyclic sieving phenomenon of 
Reiner, Stanton and White~\cite{Reiner_Stanton_White}
count symmetry classes in 
combinatorial objects
by evaluating their $q$-generating series at
a primitive root of unity. Is there a cyclic sieving phenomenon for the $q$-Stirling numbers of the first and second kind?

Are there other classical $q$-analogues
which can be viewed naturally as $q$-$(1+q)$-analogues
as in Goals~\ref{goal_one} and \ref{goal_two}?
Ehrenborg and Readdy~\cite{Ehrenborg_Readdy_Gaussian} 
have recently discovered
a {\em symmetric}
$q$-$(1+q)$-analogue of
the $q$-binomial which
is more compact than the 
Fu et al construction.

Garsia and Remmel~\cite{Garsia_Remmel} have a more
general notion of the $q$-Stirling number of the second kind
as enumerating non-attacking rooks on a general Ferrers' board.
This will be the subject of another paper.

It would be interesting to look deeper into the poset
structure of the
Stirling posets of the first and second kind,
such as the interval structure and the $f$- and $h$-vectors
of each poset. Park has a notion of the Stirling poset which
arises from the theory of $P$-partitions~\cite{Park}.
It has no connection with the Stirling posets
in this paper.

The $q$-binomial
has the combinatorial interpretation of 
counting certain
subspaces over a finite
field with $q$ elements 
as well as the corresponding subspace lattice.
Milne~\cite{Milne_restricted} has
an interpretation of the $q$-Stirling number of the second
kind as sequences of lines in a vector space over the
finite field with $q$ elements. Is there an analogous interpretation for the $(q, t)$-Stirling numbers of the second kind? 
Bennett, Dempsey and Sagan~\cite{Bennett_Dempsey_Sagan} construct 
families of posets which include Milne's
construction.
One would like a similar
construction for the $q$-Stirling 
numbers of the first kind.

In~\cite{Wachs_White} 
Wachs and White have 
discovered
many other statistics on $RG$-words which generate
the $q$-Stirling numbers. 
In particular, 
their $ls$ and $lb$ statistics are defined by 
$ls(w) = \prod_{i=1}^n q^{w_i - 1}$ and 
$lb(w) = \prod_{i = 1}^n lb_i(w)$ where
$lb_i(w) = q^{m_{i-1}-w_i}$ if $m_{i-1} \geq w_{i}$
and
$lb_i(w) = 1$ if $m_{i-1} < w_i$.
The 
$ls$ statistic and the $\wt$ statistic
in~(\ref{equation_Stirling_second_weight})
are related by
$ls(w) = q^{\binom{k}{2}} \cdot \wt(w)$.
The authors are currently looking 
at these statistics,
as well as White's
interpolations~\cite{White} between
these statistics,
in view of the first Goal~\ref{goal_one},
as well as poset theoretic and homological consequences
of Goal~\ref{goal_two}.
The first author has considered the $q$-binomial
via the major
index in terms of this research program~\cite{Cai_major_index}.

\section*{Acknowledgements}

The first author was partially supported by National
Security Agency grant H98230-13-1-0280.
This work was partially supported by a grant
from the Simons Foundation
(\#206001 to Margaret Readdy).
The second author would like to thank the Princeton University
Mathematics Department
where this work was completed.
The authors thank Dennis Stanton
for conversations when this project was being
initiated,
and
Richard Ehrenborg,
who provided many helpful comments
on the exposition.
Thanks also to 
Jim Haglund,
Vic Reiner,
Dennis Stanton,
Michelle Wachs
and
Dennis White
for their comments 
regarding future research directions,
and to Doron Zeilberger for giving historical
references for $q$-analogues.

\newcommand{\journal}[6]{{\sc #1,} #2, {\it #3} {\bf #4} (#5), #6.}
\newcommand{\book}[4]{{\sc #1,} ``#2,'' #3, #4.}
\newcommand{\bookf}[5]{{\sc #1,} ``#2,'' #3, #4, #5.}
\newcommand{\books}[6]{{\sc #1,} ``#2,'' #3, #4, #5, #6.}
\newcommand{\collection}[6]{{\sc #1,}  #2, #3, in {\it #4}, #5, #6.}
\newcommand{\thesis}[4]{{\sc #1,} ``#2,'' Doctoral dissertation, #3, #4.}
\newcommand{\springer}[4]{{\sc #1,} ``#2,'' Lecture Notes in Math.,
                          Vol.\ #3, Springer-Verlag, Berlin, #4.}
\newcommand{\preprint}[3]{{\sc #1,} #2, preprint #3.}
\newcommand{\preparation}[2]{{\sc #1,} #2, in preparation.}
\newcommand{\appear}[3]{{\sc #1,} #2, to appear in {\it #3}}
\newcommand{\submitted}[3]{{\sc #1,} #2, submitted to {\it #3}}
\newcommand{\JCTA}{J.\ Combin.\ Theory Ser.\ A}
\newcommand{\AdvancesinMathematics}{Adv.\ Math.}
\newcommand{\JournalofAlgebraicCombinatorics}{J.\ Algebraic Combin.}

\newcommand{\communication}[1]{{\sc #1,} personal communication.}


{\small

}

\bigskip

\noindent
{\em Y.\ Cai and M.\ Readdy,
Department of Mathematics,
University of Kentucky,
Lexington, KY 40506,}
{\tt yue.cai@uky.edu}, {\tt margaret.readdy@uky.edu}

\end{document}